\documentclass{amsart}
\usepackage{amsmath,amscd,amssymb,combelow,color,enumitem,graphicx,float,comment,mathtools,mathrsfs,appendix,hyperref}
\usepackage[all]{xy}
\usepackage{tikz}
\usetikzlibrary{calc,arrows.meta, positioning}
\usepackage{refcount}

\newcommand{\primeeqtag}[1]{\tag{\getrefnumber{#1}'}}

\newtheorem{theorem}{Theorem}[section]

\newtheorem{proposition}[theorem]{Proposition}
\newtheorem{lemma}[theorem]{Lemma}
\newtheorem{corollary}[theorem]{Corollary}
\newtheorem{conjecture}[theorem]{Conjecture}
\newtheorem{definition}[theorem]{Definition}
\newtheorem{claim}[theorem]{Claim}
\newtheorem{example}[theorem]{Example}
\newtheorem{remark}[theorem]{Remark}

\def\loccitt{\emph{loc. cit.}}

\def\fg{{\mathfrak{g}}}

\def\fm{{\mathfrak{m}}}

\def\fS{{\mathfrak{S}}}

\def\BC{{\mathbb{C}}}

\def\BK{{\mathbb{K}}}

\def\BN{{\mathbb{N}}}

\def\BZ{{\mathbb{Z}}}

\def\CO{{\mathcal{O}}}

\def\CS{{\mathcal{S}}}

\def\CV{{\mathcal{V}}}

\def\ph{\varphi}

\def\vs{\varsigma}

\def\and{\textrm{ }\&\textrm{ }}

\def\eSym{\emph{Sym}}

\def\oI{\bar{I}}

\def\oi{\bar{i}}
\def\oj{\bar{j}}

\def\oCS{\bar{\CS}}
\def\oCV{\bar{\CV}}

\def\oUU{\bar{\UU}}
\def\oUUpm{\bar{\UU}^\pm}
\def\oUUp{\bar{\UU}^+}
\def\oUUm{\bar{\UU}^-}

\def\oalpha{\bar{\alpha}}
\def\ozeta{\bar{\zeta}}

\def\CC{{{\mathcal{C}}}}

\def\nn{{{\BN}}^I}
\def\zz{{{\BZ}}^I}

\def\twU{U_q(L\fg^\sigma)}

\def\UU{\mathbf{U}}
\def\UUo{\mathbf{U}^0}
\def\UUp{\mathbf{U}^+}
\def\UUpm{\mathbf{U}^\pm}
\def\UUm{\mathbf{U}^-}
\def\UUg{\mathbf{U}^\geq}
\def\UUl{\mathbf{U}^\leq}

\def\tUU{{\widetilde{\mathbf{U}}}}
\def\tUUp{{\widetilde{\mathbf{U}}^+}}
\def\tUUpm{{\widetilde{\mathbf{U}}^\pm}}

\def\tUUm{{\widetilde{\mathbf{U}}^-}}

\def\bs{{\boldsymbol{\vs}}}

\def\b0{{\boldsymbol{0}}}

\def\oQ{{\bar{Q}}}

\def\bm{\boldsymbol{m}}
\def\bn{\boldsymbol{n}}

\def\bx{\boldsymbol{x}}

\def\tUpsilon{\widetilde{\Upsilon}}

\def\fold{\rho}

\def\opsi{\bar{\psi}}
\def\bpsi{\boldsymbol{\psi}}
\def\obpsi{\bar{\bpsi}}
\def\bord{\textbf{ord }}

\def\vac{|\varnothing \rangle}

\def\on{\bar{n}}
\def\obn{\bar{\bn}}

\def\onn{\BN^{\oI}}
\def\ozz{\BZ^{\oI}}

\def\oR{\bar{R}}
\def\ofS{\bar{\fS}}

\def\ox{\bar{x}}
\def\obx{\bar{\bx}}

\def\oUpsilon{\bar{\Upsilon}}

\def\fS{{\mathfrak{S}}}

\def\sh{\text{sh}}
\def\esh{\emph{sh}}

\begin{document}
	
	\title[FOLDING SHUFFLE ALGEBRAS AND TWISTED $q$-CHARACTERS]{\Large{\textbf{Folding shuffle algebras and twisted $q$-characters}}}
	
	\author[Andrei Negu\cb t and Keyu Wang]{Andrei Negu\cb t and Keyu Wang}
	
	\address{École Polytechnique Fédérale de Lausanne (EPFL), Lausanne, Switzerland \newline \text{ } \ \ Simion Stoilow Institute of Mathematics (IMAR), Bucharest, Romania} 
	\email{andrei.negut@gmail.com}
	
	\address{Faculty of Mathematics, University of Vienna, Vienna, Austria}
	\email{keyu.wang@univie.ac.at}
	
	\def\uppercasenonmath#1{}
	
	\begin{abstract} 
		
		Using our new notion of folding shuffle algebras, we prove a conjecture of Hernandez on the equality between certain $q$-characters of quantum untwisted affine algebra modules and their twisted counterparts. We generalize this result to the setting of arbitrary quivers with automorphisms, in particular by defining and describing twisted quantum toroidal algebras. 
		
	\end{abstract}
	\maketitle
	
	\section{Introduction}
	
	\subsection{Motivation}
	\label{sub:motivation}
	
	Let $\fg$ be a simple finite-dimensional Lie algebra of type ADE, with set of simple roots $I$. The untwisted quantum affine algebra $U_q(L\fg)$ has been widely studied over the past several decades. In particular, the Drinfeld new realization of this algebra has spurred significant development in its representation theory, for instance with the introduction of Drinfeld polynomials and $q$-characters.
	
	\medskip
	
	\noindent By contrast, the twisted quantum affine algebra $U_q(L\fg^\sigma)$ associated to an automorphism $\sigma$ of the Dynkin diagram of $\fg$, is technically more difficult in the Drinfeld new realization. Indeed, while $L \fg^\sigma$ is a Lie subalgebra of $L\fg$, no such simple relation exists between the $q$-versions of these Lie algebras, and thus the relationship between their categories of modules remains mysterious. An interesting and long-standing question has been to transfer results from the untwisted to the twisted setup. 
	
	\medskip
	
	\noindent One important direction of study concerns the precise connection between the Grothendieck rings of categories of modules. More precisely, the following conjectural relation was proposed in \cite{H KR}. Let $\CC_{\BZ}$ and $\CC^\sigma_{\BZ}$ denote the categories of finite-dimensional modules of $U_q(L\fg)$ and $U_q(L\fg^\sigma)$, respectively. Using the theory of $q$-characters developed in \cite{FR}, Hernandez constructed in \cite{H KR} a ring isomorphism 
	\begin{equation}
		\label{eqn:iso intro}
		\pi: K_0\big(\CC_{\BZ}\big) \xrightarrow{\sim} K_0\big(\CC^{\sigma}_{\BZ}\big)
	\end{equation}
	Explicitly, if we denote by $V_{i,a}$ the fundamental modules of either $U_q(L\fg)$ or $U_q(L\fg^\sigma)$, then we obtain the following identification (that goes back to the work of \cite{CP}) 
	\[
	K_0\big(\CC_{\BZ}\big) \simeq \BC\big[[V_{i,a}]~|~i \in I, a \in \BZ \big]
	\]
	and analogously for $\CC^\sigma_{\BZ}$.
	In this presentation, the map $\pi$ simply sends the classes of fundamental modules to classes of fundamental modules. However, it is unclear how the classes of more general simple modules behave under $\pi$. Hernandez conjectured (see \cite{H KR} and \cite[Conjecture~2.20]{W QQ}) that $\pi$ maps simple classes to simple classes. More precisely, the class of the simple module $L(\bpsi)$ classified by a loop weight
	$$
	\bpsi \in \left(\BC[[z^{-1}]]^\times\right)^{I}
	$$
	should be mapped to the class of the simple module $L(\fold(\bpsi))$, where $\fold(\bpsi)$ is described explicitly in \eqref{eqn:fold intro 1}-\eqref{eqn:fold intro 2}. The aforementioned conjecture is equivalent to the fact that the folding map $\fold$ sends $\chi_q(L(\bpsi))$ to $\chi_q(L(\fold(\bpsi)))$ (see Conjecture~\ref{conj:main}). This conjecture was proved in \cite{H KR} for Kirillov--Reshetikhin modules, and more recently in \cite{FQ} for any simple module in the category $\CC_{\BZ}$ for $\fg$ of type $A$ or $D$.
	
	\medskip
	
	\noindent In the present paper, we provide a uniform proof of Hernandez' conjecture. Perhaps more importantly, we generalize and systematize it by:
	
	\medskip
	
	\begin{itemize}[leftmargin=*]
		
		\item extending the result to more general simple modules, that includes those in the category $\CO_{\BZ}$ defined in \cite{H Fusion} (which is denoted by $\widehat{\CO}_{\BZ}$ in \cite{MY}), as well as the category  $\CO^{\sh}_{\BZ}$ of shifted quantum affine algebra modules defined in \cite{H Shifted};
		
		\medskip
		
		\item extending the result to general quantum loop algebras, including those associated with arbitrary symmetrizable Kac--Moody Lie algebras $\fg$, as well as other algebras that have recently garnered much attention (such as $K$--theoretic Hall algebras of quivers and BPS algebras associated to toric Calabi--Yau threefolds).
		
	\end{itemize}  
	
	\subsection{\texorpdfstring{$q$}{q}-characters}
	\label{sub:q char intro}
	
	Fix a parameter $q \in \BC^* \backslash \sqrt[\BN]{1}$ and a finite-dimensional complex simple (henceforth called ``finite type") Lie algebra $\fg$ endowed with an automorphism $\sigma$ of its Dynkin diagram. Let $m = \text{ord }\sigma$. We will denote by $I$ the set of simple roots of $\fg$, and by $\oI$ the set of $\sigma$ orbits of $I$. Drinfeld constructed a realization
	\begin{equation}
		\label{eqn:twisted intro}
		U_q(L\fg^\sigma) \cong \BC \Big \langle e_{i,d},f_{i,d},\ph^\pm_{i,d'} \Big \rangle_{i \in I, d \in \BZ, d' \geq 0} \Big / \Big(\text{relations}\Big)
	\end{equation}
	of the $q$-deformed universal enveloping algebra of $\widehat{\fg}^{\sigma}$ (see \cite{D, Dr}), which we will recall in Subsection~\ref{sub:affine}. We call \eqref{eqn:twisted intro} a twisted quantum affine algebra, and refer to the case $\sigma = \text{Id}$ as the untwisted quantum affine algebra $U_q(L\fg)$. The representation theory of the algebras above was studied in \cite{CP, CP tw}. In a nutshell, finite-dimensional simple modules of the untwisted $U_q(L\fg)$ are indexed by their highest loop weight
	\begin{equation}
		\label{eqn:loop weight intro}
		\bpsi = (\psi_i(z))_{i \in I} \in \left(\BC[[z^{-1}]]^\times\right)^{I}
	\end{equation}
	Conversely, for any $\bpsi$ as above, a simple module $L(\bpsi)$ for the Borel subalgebra of the quantum affine algebra was constructed in \cite{HJ}, and a simple module for the so-called shifted quantum affine algebra was constructed in \cite{H Shifted} (we review these notions in Section~\ref{sec:modules}). Following \cite{FR}, the $q$-character of any module $V$ of either the Borel or the shifted version of $U_q(L\fg)$ is defined as 
	\begin{equation}
		\label{eqn:q-character intro}
		\chi_q(V) = \sum_{\bpsi} [\bpsi] \dim_{\BC} \left(V_{\bpsi}\right) 
	\end{equation}
	where $V_{\bpsi}$ is the generalized eigenspace of the commuting operators $\{\ph^+_{i,d'}\}_{i\in I, d' \geq 0}$ corresponding to eigenvalues encoded by the loop weight $\bpsi$, see \eqref{eqn:q char}. The notions above also make sense for twisted quantum affine algebras, in which case we will denote them by adding bars on top of the relevant symbols. There is a folding map
	\begin{equation}
		\label{eqn:fold intro 1}
		\fold :  \left(\BC[[z^{-1}]]^\times\right)^{I} \rightarrow  \left(\BC[[z^{-1}]]^\times\right)^{\oI}
	\end{equation}
	defined as $\fold(\bpsi) = \obpsi$ with $\bpsi = (\psi_i(z))_{i \in I}$ and $\obpsi = (\opsi_{\oi}(z))_{\oi \in \oI}$ related by
	\begin{equation}
		\label{eqn:fold intro 2}
		\opsi_{\oi}(z) = \prod_{k=1}^m \psi_{\sigma^k(i_0)}(z\omega^k)
	\end{equation}
	where we choose a distinguished preimage $i_0 \in I$ of any $\oi \in \oI$, and $\omega$ denotes a primitive $m$-th root of unity. We now present the key relation between $q$-characters of the untwisted and twisted algebras, conjectured in \cite{H KR}, see also \cite[Conjecture 2.20]{W QQ} for the general statement. 
	
	\medskip
	
	\begin{conjecture}
		\label{conj:main}
		
		Let $\fg$ be any finite type Lie algebra with an automorphism $\sigma$. If 
		$$
		\bpsi = Y_{i_1,x_1} \dots Y_{i_n,x_n}
		$$
		for various $i_1,\dots,i_n \in I$ and $x_1,\dots,x_n \in q^{\BZ}$ (here use use the standard notation for fundamental loop weights: $Y_{i,x}$ denotes the $I$-tuple of power series equal to 
		\[
		\frac {zq_i - xq_i^{-1}}{z-x}
		\]
		on the $i$-th spot and 1 everywhere else, where $q_i = q^{d_i}$ with $d_i \in \{1,2,3\}$), then
		\[
		\fold(\chi_q(L(\bpsi)) = \chi_q(L(\fold(\bpsi)))
		\]
		where $L(\bpsi)$ and $L(\rho(\bpsi))$ are simple modules of $U_q(L\fg)$ and $U_q(L\fg^\sigma)$, respectively.
		
	\end{conjecture}
	
	\medskip 
	
	\subsection{Shuffle algebras}
	\label{sub:shuffle intro}
	
	Our main tool for proving Conjecture \ref{conj:main}, as well as the natural vehicle of its generalization, is the shuffle algebra interpretation of quantum loop algebras. In more detail, consider any quiver $Q$ with parameters in any ground field $\BK$. We denote the vertex set of the quiver by $I$ and assume that for every arrow
	\[
	\alpha : i \rightarrow j, \text{ we are given a parameter } q_{\alpha} \in \BK^*
	\]
	We may use these parameters to construct the zeta functions
	\begin{equation}
		\label{eqn:zeta intro}
		\left\{ \zeta_{ij} (x) \sim \frac {\prod_{\alpha : i \rightarrow j}(1-xq_\alpha)}{(1-x)^{\delta_{ij}}} \right\}_{i,j \in I}
	\end{equation}
	where $\sim$ means equality up to a Laurent monomial in $x$ (which will not be important to us). Given the data \eqref{eqn:zeta intro}, we recall in Subsections~\ref{sub:zeta}-\ref{sub:upsilon} the algebras
	\[
	\Big(\text{half quantum loop algebras }\UUpm \Big) \cong \Big(\text{shuffle algebras }\CS^\pm \Big)
	\]
	following \cite{N Arbitrary}. We show in Subsections~\ref{sub:pairing}-\ref{sub:double} how to construct the double
	\[
	\UU = \UUp \otimes \BK[\ph^\pm_{i,d}]_{i \in I, d \geq 0}\otimes \UUm
	\]
	assuming that the zeta functions are balanced, in the sense of \eqref{eqn:limit}. In Subsection \ref{sub:simple modules}, we recall how to define the shifted versions $\UU^{\sh}$ of the above double algebras. As shown in \cite{NT} following \cite{E,FO}, the untwisted quantum loop algebra $U_q(L\fg)$ with symmetrized Cartan matrix $\{d_{ij}\}_{i,j \in I}$ is isomorphic to the algebra $\UU$ for the data
	\begin{equation}
		\label{eqn:zeta intro particular}
		\left\{ \zeta_{ij}(x) \sim \frac {1-xq^{d_{ij}}}{(1-x)^{\delta_{ij}}} \right\}_{i,j \in I}
	\end{equation}
	modulo the additional relation $\ph_{i,0}^+\ph_{i,0}^-=1$ for all $i \in I$.
	
	\subsection{The theorem}
	\label{sub:general}
	
	We establish our main result (the proof of Conjecture \ref{conj:main}) in two related settings, which we respectively call ``folding" and ``covering" quivers.
	
	\subsubsection{Folding quivers}
	\label{sub:folding intro}
	
	\noindent Assume that the quiver $Q$ in Subsection \ref{sub:shuffle intro} has an automorphism $\sigma$ of order $m$, which preserves the arrow parameters in the sense that $q_{\alpha} = q_{\sigma(\alpha)}$ for every arrow $\alpha$. We call \eqref{eqn:zeta intro} the \emph{unfolded zeta functions}, and we will consider in Subsection~\ref{sub:quivers automorphism} the following \emph{folded zeta functions}:
	\begin{equation}
		\label{eqn:folded zeta intro}
		\left \{ \ozeta_{\oi \oj}(x) \sim  \frac {\prod_{k = 1}^{\frac m{m_{\oi}}}\prod_{\alpha : \sigma^k(i_0) \rightarrow j_0}(1-x^{m_{\oi}} \omega^{km_{\oi}} q^{m_{\oi}}_\alpha)}{(1-x^{m_{\oi}})^{\delta_{\oi \oj}}} \right\}_{ \oi,\oj \in \oI}
	\end{equation}
	where $\oI = I/\sigma$ and $i_0,j_0 \in I$ denote distinguished preimages of $\oi,\oj \in \oI$. In the equation above, $\omega$ denotes a primitive $m$-th root of unity, and we define the numbers
	\[
	m_{\oi} = \frac m{\min\{a \geq 1|\sigma^a(i_0) = i_0\}}
	\]
	The following is our main result (see Theorem~\ref{thm:main folding} for the full details).
	
	\medskip 
	
	\begin{theorem}
		\label{thm:main folding intro}
		
		For any quiver $Q$ with an automorphism $\sigma$ as above, consider the shifted quantum loop algebra modules $\UU^{\esh} \curvearrowright L(\bpsi)$ and $\oUU^{\esh} \curvearrowright L(\fold(\bpsi))$ associated to the folded and unfolded zeta functions \eqref{eqn:zeta intro} and \eqref{eqn:folded zeta intro}, respectively. If $\bpsi$ is an acceptable (in the sense of Definition~\ref{def:acceptable folding}) highest loop weight, then
		\[
		\fold(\chi_q(L(\bpsi)) = \chi_q(L(\fold(\bpsi)))
		\]
		
	\end{theorem}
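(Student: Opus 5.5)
The plan is to compute both $q$-characters through shuffle algebras and compare them with a folding homomorphism between the unfolded and folded shuffle algebras. For a highest loop weight $\bpsi$, the $q$-character of the simple module $L(\bpsi)$ over $\UU^{\esh}$ can be read off from the restriction of the Hopf pairing $\UUp\otimes\UUm\to\BK$ to the shuffle algebras $\CS^\pm$, twisted by $\bpsi$. More precisely, $L(\bpsi)$ is the quotient of the Verma-type module $\UUm\cdot v$ by the kernel of the Shapovalov-type form. That form is computed in the shuffle realization by a contour-integral formula, the $\bpsi$-twisted pairing
\[
\langle R^+, R^- \rangle_{\bpsi} \;=\; \int R^+(z_1,\dots,z_n)\, R^-(z_1,\dots,z_n)\,\prod_{a} \psi_{i_a}(z_a)\,\prod_{a\neq b}\zeta_{i_ai_b}(z_a/z_b)^{-1}\, Dz .
\]
The generalized eigenvalues of the $\ph^+$'s on $L(\bpsi)$ then come from the $z$-variables of shuffle elements. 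Concretely, the $q$-character is $\sum[\bpsi\cdot\prod_a A_{i_a,x_a}^{-1}]$, weighted by the dimension of the part of $L(\bpsi)$ supported at the point $(x_1,\dots,x_n)$ after localizing the pairing at that point. I would first establish this formula (or take it from Section~\ref{sec:modules}) for both the unfolded data \eqref{eqn:zeta intro} and the folded data \eqref{eqn:folded zeta intro}.

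The central construction is a map $\Upsilon:\CS^\pm\to\oCS^\pm$, or rather its dual in the opposite direction. It sends a symmetric function in variables $z_{i,a}$ to a function in the variables $\bar z_{\oi,a}$ by the substitution $z_{\sigma^k(i_0),a}\mapsto \bar z_{\oi,a}\,\omega^{k}$. The numbers $m_{\oi}$ appear because stabilizers reduce the orbit sizes, so one uses $x^{m_{\oi}}$. The key identity to check is
\[
\prod_{k}\prod_{l}\zeta_{\sigma^k(i_0)\sigma^l(j_0)}(\omega^{k-l}x)\;\sim\;\ozeta_{\oi\oj}(x)\ \text{ up to the factor for } \delta_{\oi\oj},
\]
which is a root-of-unity product identity: $\prod_k(1-\omega^k y)=1-y^m$. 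This identity shows that the folding substitution intertwines shuffle products. It also sends the twisted pairing $\langle\cdot,\cdot\rangle_{\bpsi}$ to $\langle\cdot,\cdot\rangle_{\fold(\bpsi)}$ up to a nonzero constant. The reason is that $\prod_a\psi_{i_a}(z_a)$ becomes $\prod\opsi_{\oi}(\bar z)$ precisely by \eqref{eqn:fold intro 2}, so the integrands match, and the contours must be chosen compatibly.

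With this intertwining in hand, the weight spaces compare as follows. The $\fold(\bpsi)\prod\bar A^{-1}$ weight space of $L(\fold(\bpsi))$ is the quotient of the $\oCS^-$ part localized at the folded point by the kernel of the folded pairing. On the unfolded side, the sum over all unfolded points that map to that folded point, meaning orbits under multiplication by powers of $\omega$, gives the corresponding localized quotient of $\CS^-$. I would show that the folding map induces an isomorphism between the localized shuffle algebras in which the variables sit at these points. Since the pairings match, the ranks agree, and summing the resulting dimension equalities gives $\fold(\chi_q(L(\bpsi)))=\chi_q(L(\fold(\bpsi)))$.

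\textbf{Main obstacle.} I expect the hard step to be this local isomorphism. Folding identifies variables differing by $\omega^k$, and the map is bijective only when no two points in the support of the relevant weight spaces collide or become $\omega$-twisted in a degenerate way. That is exactly where the acceptability hypothesis of Definition~\ref{def:acceptable folding} must enter: it should guarantee that the spectral parameters occurring in the $q$-character, roughly $x\in q^{\BZ}$ times the $\psi$-poles, lie in a region where distinct $\omega$-translates never interact through the zeta poles. Under that hypothesis the localized unfolded algebra becomes a tensor product over $\sigma$-orbits, matching the folded one. I would prove the local isomorphism by checking the wheel conditions of \cite{N Arbitrary} on both sides after localization, and then using a PBW or dimension count.
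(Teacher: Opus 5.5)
The step your argument rests on is false. The substitution $z_{\sigma^k(i_0),a}\mapsto \bar z_{\oi,a}\,\omega^k$ does not intertwine shuffle products. It also does not carry the $\bpsi$-twisted pairing to the $\fold(\bpsi)$-twisted one, not even up to constants.

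The reason is a mismatch in the number of factors. In a folded product, every pair of variables carries $\ozeta_{\oi_b\oi_a}(z_b/z_a)=\prod_{k=1}^m\zeta_{\sigma^k(i_{b,0})i_{a,0}}(\omega^k z_b/z_a)$, i.e.\ $m$ unfolded zeta factors. An unfolded product carries only the single factor $\zeta_{i_bi_a}(z_b/z_a)$. Likewise $\opsi_{\oi}(z)=\prod_{k=1}^m\psi_{\sigma^k(i_0)}(z\omega^k)$ has $m$ factors, while the unfolded integrand has just $\psi_{i_a}(z_a)$.

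The identity $\prod_k(1-\omega^k y)=1-y^m$ only shows that $\ozeta_{\oi\oj}$ is a function of $x^{m_{\oi}}$. Your displayed double product is not $\ozeta_{\oi\oj}(x)$; already for a free action it equals $\ozeta_{\oi\oj}(x)^m$ up to a monomial. Moreover, the substituted function is neither $\fS_{\obn}$-symmetric nor a function of the $z^{m_{\oi}}$, so it does not land in $\oCV^\pm$ without averaging. The paper explicitly notes that its comparison map is not a shuffle algebra homomorphism. This is in line with the introduction's remark that no simple relation exists between $U_q(L\fg)$ and $U_q(L\fg^\sigma)$.

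Consequently, ``since the pairings match, the ranks agree'' has no support. The local isomorphism you postpone to the end is in fact the entire proof. The route you suggest for it (wheel conditions after localization plus a PBW count) is not available for the arbitrary quivers in the theorem, where no wheel-condition description of $\CS^\pm$ exists.

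What is missing is a purely local argument. Work with the inclusion $\iota_{\obn,\bn}:\oCV_{-\obn}\hookrightarrow\CV_{-\bn}$ of Proposition~\ref{prop:folding shuffle}, which goes in the folded-to-unfolded direction: it embeds the invariants of $G_{\bm,\bn}\rtimes\fS_{\obn}$ into the invariants of $\fS_{\bn}$. It maps $\oCS$ into $\CS$ because $\zeta_{i_bi_a}$ divides the appropriately rescaled $\ozeta_{\oi_b\oi_a}$. Acceptability is then used twice.

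First, a nonzero folded residue at $\obx$ forces a unique lift $(\bn,\bx)$. This lift is built inductively from two sources:
\begin{itemize}
\item poles of $\opsi$, each lying in exactly one $P_{\sigma^\ell(i_0)}\omega^{-\ell}$ by \eqref{eqn:disjoint folding};
\item zeros of $\ozeta$, propagated by \eqref{eqn:condition folding}.
\end{itemize}
At these points the rescaled discrepancies $\ozeta_{\oi_b\oi_a}/\zeta_{i_bi_a}$ and $\psi_{i_b}/\opsi_{\oi_b}$ do not vanish, so they are units locally. Hence the two residue conditions coincide, and $\iota$ induces an injection $\oCS_{-\obn}/J(\obpsi)_{\obx}\hookrightarrow\CS_{-\bn}/J(\bpsi)_{\bx}$.

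Injectivity only bounds dimensions from one side. Equality needs $\CS_{-\bn}=\iota(\oCS_{-\obn})+(\CS_{-\bn}\cap J(\bpsi)_{\bx})$, which the paper proves in two steps:
\begin{itemize}
\item The span $I$ of $\fS_{\bn}$-symmetrizations of folded-type integrands agrees with $\CS_{-\bn}$ modulo $J(\bpsi)_{\bx}$, again because the extra zeta factors are units.
\item $I=\Sigma(I)+(I\cap J)$ for the averaging map $\Sigma$ over $(G_{\bm,\bn}\rtimes\fS_{\obn})/\fS_{\bn}$. Let $x=x_1,x_2,\dots$ be the distinct $\fS_{\bn}$-orbits inside the larger orbit, and pick $e_x\in(1+\fm_x^N)\cap\bigcap_{i\geq 2}\fm_{x_i}^K$ by the Chinese remainder theorem. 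Again by \eqref{eqn:disjoint folding}, no $g\in(G_{\bm,\bn}\rtimes\fS_{\obn})\setminus\fS_{\bn}$ maps a lift $\tilde x$ into $\fS_{\bn}\tilde x$. Therefore $\Sigma(e_xa)\equiv a$ modulo $\fm_x^N$ for every $a\in I$.
\end{itemize}
This averaging-plus-idempotent argument, rather than any global compatibility of products or pairings, is what your proposal is missing.
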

	
	\medskip 
	
	\subsubsection{Coverings of quivers}
	\label{sub:covering intro}
	
	The following setting generalizes the situation in the previous Subsubsection when the automorphism acts freely on the vertex set of the quiver (see Remark~\ref{rem:intersection}). In general, we define a covering of quivers to be a map
	\begin{equation}
		\label{eqn:covering quivers intro}
		\fold : Q \rightarrow \oQ 
	\end{equation}
	whose underlying function of vertex sets $\fold : I \rightarrow \oI$ is surjective and has the property that above every arrow of $\oQ$ with any target $\oj$, there exists a unique arrow in $Q$ with target any given preimage $j \in \fold^{-1}(\oj)$:
	\[
	\forall \oalpha : \oi \rightarrow \oj \text{ and } j \in \fold^{-1}(\oj), \ \exists! i \in \fold^{-1}(\oi) \text{ and } \alpha : i \rightarrow j \text{ s.t. } \fold(\alpha) = \oalpha
	\]
	Any choice of parameters $\{q_{\oalpha} \in \BK^*\}_{\oalpha \text{ arrow of }\oQ}$ uniquely induces a choice of parameters $q_\alpha \in \BK^*$ for every arrow of $Q$ via the map $\fold$, see \eqref{eqn:matching parameters}. Thus, we refer to the functions \eqref{eqn:zeta intro} as the \emph{upper zeta functions} and define the \emph{lower zeta functions} as
	\[
	\left\{\ozeta_{\oi \oj}(x) \sim \frac {\prod_{\oalpha : \oi \rightarrow \oj} (1-xq_{\oalpha})}{(1-x)^{\delta_{\oi \oj}}}\right\}_{\oi,\oj \in \oI}
	\]
	The following is an analogue of Theorem~\ref{thm:main folding intro} (see Theorem~\ref{thm:main covering} for the full details).
	
	\medskip 
	
	\begin{theorem}
		\label{thm:main covering intro}
		
		For any covering of quivers \eqref{eqn:covering quivers intro}, consider the shifted quantum loop algebra modules $\UU^{\esh} \curvearrowright L(\bpsi)$ and $\oUU^{\esh} \curvearrowright L(\fold(\bpsi))$ associated to the upper and lower zeta functions, respectively. The folding map $\rho$ on loop weights is defined in \eqref{eqn:loop weight covering}. If $\bpsi$ is an acceptable (in the sense of Definition~\ref{def:acceptable covering}) highest loop weight, then
		\[
		\fold(\chi_q(L(\bpsi)) = \chi_q(L(\fold(\bpsi)))
		\]
		
	\end{theorem}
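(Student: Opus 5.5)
We will deduce Theorem~\ref{thm:main covering intro} from a shuffle-algebra description of $q$-characters. The first step is to construct a folding homomorphism of shuffle algebras
\[
\Upsilon^\pm : \CS^\pm \longrightarrow \oCS^\pm ,
\]
defined on a shuffle element $F(\dots, z_{i1},\dots,z_{in_i},\dots)$ by setting all variables indexed by vertices $i \in \fold^{-1}(\oi)$ equal to variables of color $\oi$. We then symmetrize, with a correction factor chosen so that the upper zeta functions become the lower ones. This step uses the covering property: for each arrow $\oalpha : \oi \to \oj$ and each $j \in \fold^{-1}(\oj)$ there is exactly one lift $\alpha$ with target $j$. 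Consequently
\[
\prod_{i \in \fold^{-1}(\oi)} \zeta_{ij}(x) \sim \ozeta_{\oi\oj}(x),
\]
up to the diagonal factors $(1-x)^{\delta_{ij}}$. These diagonal factors have to be tracked separately, because $\delta_{ij}\neq\delta_{\oi\oj}$ in general. We check that this map respects the shuffle product and the wheel conditions. We also check that it intertwines the bialgebra pairing and the coproduct, in the sense that $\Delta$ on $\CS^\pm$ composed with the Cartan folding $\ph^\pm_i(z) \mapsto \prod_{i\mapsto\oi}\ph^\pm_i(z)$ matches $\Delta$ on $\oCS^\pm$. This mirrors the untwisted computation and should be routine once the correct normalization is fixed.

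The second step expresses $\chi_q(L(\bpsi))$ intrinsically through shuffle algebras. By the theory recalled for shifted modules, $L(\bpsi)$ is the quotient of the Verma-type module by the kernel of the Shapovalov-type pairing between $\UUm$ and $\UUp$, twisted by $\bpsi$. Its weight space in degree $-\bn$ is therefore the image of $\CS^-_{-\bn}$ modulo the radical of the form
\[
\langle F, G\rangle_{\bpsi} = \text{(residue/constant term of) } F\, G \prod \psi_i(z_{ia}) \cdot \prod \zeta^{-1}.
\]
In the Feigin--Odesskii picture, the $q$-character records how this form decomposes under the action of $\ph^+$. Concretely, $\chi_q$ is a generating function of the ranks of the form restricted to generalized eigenspaces, which correspond to the positions $z_{ia}=x$ of the poles. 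We will show that $\Upsilon$ carries $\langle\cdot,\cdot\rangle_{\bpsi}$ to $\langle\cdot,\cdot\rangle_{\fold(\bpsi)}$, since the product $\prod_{i\mapsto \oi}\psi_i$ is exactly $\opsi_{\oi}$ by \eqref{eqn:loop weight covering}. It will then follow that the generalized eigenspace dimensions of $L(\bpsi)$, summed over fibers of $\fold$, equal those of $L(\fold(\bpsi))$. This is precisely the equality $\fold(\chi_q(L(\bpsi)))=\chi_q(L(\fold(\bpsi)))$.

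The main obstacle is the rank comparison. $\Upsilon$ is not injective, and setting variables of distinct colors equal can in principle create or destroy degeneracies of the form. Acceptability in the sense of Definition~\ref{def:acceptable covering} should be exactly what prevents this. The poles of the $\psi_i$ for different $i$ in one fiber must not interact, so the relevant eigenvalue data (spectral parameters) of distinct preimages stay separated after folding. To handle this, we first decompose $L(\bpsi)$ into generalized $\ph^+$-eigenspaces labeled by monomials in $Y$'s and $A^{-1}$'s. Under acceptability, we then show that $\fold$ is injective on the loop weights that actually occur. With that, an isomorphism of eigenspaces can be built locally, via a $\UU^{\esh}$-to-$\oUU^{\esh}$ restriction on completed eigenspaces; this part is analogous to a Frenkel--Mukhin type argument. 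Combined with the equality of total dimensions from the form comparison, this yields the equality term by term.
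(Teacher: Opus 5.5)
\textbf{There is a genuine gap: Step 1 fails, and the rest of the argument depends on it.} A covering does not induce a shuffle algebra homomorphism $\CS^\pm\to\oCS^\pm$ (equivalently $\UU^\pm\to\oUU^\pm$) of the kind you describe.

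Since $\CS^-$ is generated in degree one, your map is determined by its degree-one part. That part has the form $f_i(x)\mapsto c_i(x)\,\bar{f}_{\fold(i)}(\lambda_i x)$, where $\lambda_i=\omega_i^{-1}$ for the identification \eqref{eqn:variable shift}. For the map to be a homomorphism, it must send the upper quadratic relation \eqref{eqn:opposite} to zero in $\oCS^-$. It does not. The reason is that $\ozeta_{\oi\oj}(x)=\prod_{i'\in\fold^{-1}(\oi)}\zeta_{i'j_0}(\omega_{i'}x)$ has more linear factors than $\zeta_{ij}$. So the lower quadratic relation is strictly weaker than the upper one and does not imply it. (Your version of this identity drops the shifts $\omega_{i'}$; with the shifts, the diagonal factors take care of themselves.)

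Concretely, take the covering of Remark~\ref{rem:intersection}: vertices $1,2\mapsto\bar 1$, with $\omega_2=-1$. The relation $f_2(y)f_1(x)\zeta_{21}(y/x)=f_1(x)f_2(y)\zeta_{12}(x/y)$ maps to an element of $\oCS^-$ whose two terms differ by the non-monomial factor $\frac{y+xq^2}{x+yq^2}$. That element is therefore non-zero, and no choice of rescalings or degree-one correction factors repairs this.

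Put differently, the correction factor you would need is $\prod_{a<b}\ozeta_{\oi_b\oi_a}/\zeta_{i_bi_a}$. This depends on the order of the letters in the word $f_{i_1,d_1}\cdots f_{i_n,d_n}$, not only on the shuffle element the word produces. Consistently with this, the paper notes that even the natural comparison map $\iota_{\obn,\bn}$ of \eqref{eqn:iota}, which goes from lower to upper, is only linear.

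The same objection undercuts the rest of your argument:
\begin{itemize}
\item \emph{Your second step.} There is no global map carrying $\langle\cdot,\cdot\rangle_{\bpsi}$ to $\langle\cdot,\cdot\rangle_{\fold(\bpsi)}$. The two residue integrands differ by $\ozeta_{\oi_b\oi_a}/\zeta_{i_bi_a}$ and by the ratio of $\opsi_{\oi_b}$ to the shifted $\psi_{i_b}$. Each upper variable sees only its own $\psi_{i_b}$, not the product over the fiber.
\item \emph{Your last paragraph.} There is no algebra map between $\UU^{\esh}$ and $\oUU^{\esh}$ along which to restrict, so the Frenkel--Mukhin-type step has nothing to work with.
\end{itemize}

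\textbf{What is needed instead is a local, fiber-by-fiber comparison over each point $\obx$.} Use the linear map $\iota_{\obn,\bn}$ to compare $\oCS_{-\obn}/J(\obpsi)_{\obx}$ with $\bigoplus\CS_{-\bn}/J(\bpsi)_{\bx}$. This map does send $\oCS_{-\obn}$ into $\CS_{-\bn}$, because $\zeta_{ij}$ divides the shifted $\ozeta_{\oi\oj}$.

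Your intuition that acceptability keeps the spectral data of different preimages apart is correct, and it is exactly how the injectivity half goes. By acceptability, \eqref{eqn:disjoint covering} and the unique lifting of arrows \eqref{eqn:assumption}:
\begin{itemize}
\item every potentially non-zero lower residue corresponds to a unique upper one;
\item only one pair $(\bn,\bx)$ over $\obx$ contributes;
\item the discrepancy factors above do not vanish at the relevant points.
\end{itemize}

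The other half has no counterpart in your proposal, and it is where the real work lies. One must show $\CS_{-\bn}=\iota_{\obn,\bn}(\oCS_{-\obn})+(\CS_{-\bn}\cap J(\bpsi)_{\bx})$. The paper does this in two moves. First, $\ozeta/\zeta$ is a unit in $\CV_{-\bn}/J(\bpsi)_{\bx}$, so the elements \eqref{eqn:spherical 4} span $\CS_{-\bn}$ modulo $J(\bpsi)_{\bx}$. Second, it multiplies by a Chinese-remainder element and averages over $\ofS/\fS$ (Claim~\ref{claim:commutative algebra}). This works because no element of $\ofS\setminus\fS$ maps the support point into its own $\fS$-orbit.

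Your appeal to an ``equality of total dimensions from the form comparison'' is exactly this missing step.
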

	
	\medskip 
	
	\subsection{Main example: quantum affinizations of Kac--Moody Lie algebras}
	\label{sub:strongly intro}
	
	Let $\fg$ be the Kac--Moody Lie algebra associated to any generalized Cartan matrix 
	\[
	C = \left(\frac {2d_{ij}}{d_{ii}}  \in \BZ\right)_{i,j \in I}
	\]
	We call $\fg$ \emph{strongly symmetrizable} if 
	\[
	d_{ij} \in \left\{0,-\max\left(\frac {d_{ii}}2, \frac {d_{jj}}2 \right) \right\}
	\]
	for all $i \neq j$. Consider any strongly symmetrizable $\fg$ equipped with an automorphism $\sigma : I \rightarrow I$ of order $m$, by which we mean that $d_{ij} = d_{\sigma(i)\sigma(j)}$ for all $i,j \in I$. Given these data, we define the corresponding twisted quantum loop algebra as:
	\[
	U_q(L\fg^{\sigma}) = \BC \Big \langle e_{i,d}, f_{i,d}, \ph^\pm_{i,d'} \Big \rangle_{i \in I, d \in \BZ, d' \geq 0} \Big/ \Big(\text{relations \eqref{eqn:rel quantum 0 twisted intro}-\eqref{eqn:rel quantum 9 twisted intro}} \Big) 
	\]
	where for $e_i(z) = \sum_{d \in \BZ} \frac {e_{i,d}}{z^d}$, $f_i(z) = \sum_{d \in \BZ} \frac {f_{i,d}}{z^d}$, $\ph^\pm_i(z) = \sum_{d = 0}^{\infty} \frac {\ph^\pm_{i,d}}{z^{\pm d}}$, we set
	\begin{equation}
		\label{eqn:rel quantum 0 twisted intro}
		e_{\bigcirc} = f_{\bigcirc} = 0 \ \footnote{We refer to $e_\bigcirc, f_\bigcirc$ as the ``clunky relations", which were introduced in \cite{N Reduced}, cf. \eqref{eqn:clunky}.} \text{as }\bigcirc \text{ runs over the wheels \eqref{eqn:twisted wheel strongly}} 
	\end{equation} 
	\begin{equation}
		\label{eqn:rel quantum 1 twisted intro}
		e_{\sigma(i),d} = e_{i,d} \omega^d, f_{\sigma(i),d} = f_{i,d} \omega^d, \ph^\pm_{\sigma(i),d} = \ph^\pm_{i,d} \omega^{\pm d}
	\end{equation} 
	\begin{equation}
		\label{eqn:rel quantum 2 twisted intro}
		e_i(x) e_j(y) \prod_{k=1}^m \left( x\omega^k - y q^{d_{\sigma^k(i)j}} \right) = e_j(y) e_i(x) \prod_{k=1}^m \left( x\omega^k q^{d_{\sigma^k(i)j}} - y \right)
	\end{equation}
	\begin{equation}
		\label{eqn:rel quantum 3 twisted intro}
		f_j(y) f_i(x) \prod_{k=1}^m \left( x\omega^k - y q^{d_{\sigma^k(i)j}} \right) = f_i(x) f_j(y) \prod_{k=1}^m \left( x\omega^k q^{d_{\sigma^k(i)j}} - y \right)
	\end{equation}
	\begin{equation}
		\label{eqn:rel quantum 4 twisted intro}
		\left[\ph^+_{i}(x), \ph^+_{j}(y)\right] = \left[\ph^+_{i}(x), \ph^-_{j}(y)\right] = \left[\ph^-_{i}(x), \ph^-_{j}(y)\right] = 0, \ \ph^+_{i,0}\ph^-_{i,0} = 1
	\end{equation} 
	\begin{align}
		&\ph^\pm_i(x) e_j(y) = e_j(y) \ph^\pm_i(x) \prod_{k=1}^m \frac {x\omega^k q^{d_{\sigma^k(i)j}}-y}{x\omega^k-y q^{d_{\sigma^k(i)j}}} \label{eqn:rel quantum 5 twisted intro} \\
		&f_j(y)\ph^\pm_i(x)  = \ph^\pm_i(x)  f_j(y) \prod_{k=1}^m \frac {x\omega^k q^{d_{\sigma^k(i)j}}-y}{x\omega^k-y q^{d_{\sigma^k(i)j}}} \label{eqn:rel quantum 7 twisted intro} 
	\end{align}
	\begin{equation}
		\label{eqn:rel quantum 9 twisted intro}
		\Big[e_{i}(x), f_{j}(y)\Big] = \left( \sum_{k=1}^m \frac {\delta_{\sigma^k(i)j}\delta(\frac{x\omega^k}{y})}{m_i} \right) \frac {\ph^+_i(x) - \ph^-_j(y)}{q_i-q_i^{-1}}
	\end{equation} 
	for all $i,j \in I$, where the numbers $m_i$ are defined in \eqref{eqn:m i}. Since strongly symmetrizable Kac--Moody Lie algebras include all finite type and affine Lie algebras (except for affine $A_1$), the formulas above both pertain to twisted quantum affine algebras and provide a definition of twisted quantum toroidal algebras. 
	
	\medskip
	
	\noindent More generally, one can define $U_q(L\fg^\sigma)$ for any symmetrizable Kac--Moody $\fg$ (by applying the general construction of Subsection \ref{sub:shuffle intro} to the zeta functions \eqref{eqn:zeta strongly twisted}), but explicit analogues of relations \eqref{eqn:rel quantum 0 twisted intro} are not known outside the strongly symmetrizable case. For $\fg$ of finite type, we prove the equivalence between the aforementioned relations and the usual Drinfeld--Serre relations in Proposition~\ref{prop:affine}. For affine $\fg$, we will develop Drinfeld--Serre-like versions of relations \eqref{eqn:rel quantum 0 twisted intro} in Proposition~\ref{prop:toroidal}.
	
	\medskip
	
	\noindent The following result generalizes Conjecture~\ref{conj:main} to the setting of symmetrizable Kac--Moody Lie algebras (it will be derived from Theorem \ref{thm:main folding intro} in Corollary~\ref{cor:main}).
	
	\medskip
	
	\begin{theorem}
		\label{thm:main}
		
		Let $\fg$ be any symmetrizable Kac--Moody Lie algebra with an automorphism $\sigma$. If $\bpsi$ is an acceptable highest loop weight, then we have
		\[
		\fold(\chi_q(L(\bpsi)) = \chi_q(L(\fold(\bpsi)))
		\]
		where $U_q(L\fg)^{\esh} \curvearrowright L(\bpsi)$ and $U_q(L\fg^{\sigma})^{\esh} \curvearrowright L(\fold(\bpsi))$ are the simple modules (of the appropriately shifted quantum algebras) defined in Subsections~\ref{sub:simple modules} and \ref{sub:simple folding}, respectively. The word ``acceptable" is explained in Definition~\ref{def:acceptable folding}; we remark that it includes all simple modules in the category $\CO^{\esh}_{\BZ}$, as explained in Corollary 4.8.
		
	\end{theorem}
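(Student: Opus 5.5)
Theorem~\ref{thm:main} will be derived as a special case of Theorem~\ref{thm:main folding intro}, applied to a suitable quiver with automorphism. The plan has three steps.

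\textbf{Step 1: realize $U_q(L\fg)$ as a quiver algebra.} Given the symmetrizable Kac--Moody datum $\{d_{ij}\}_{i,j\in I}$ with automorphism $\sigma$, I build a quiver $Q$ on the vertex set $I$ with parameters $q_\alpha$ such that the unfolded zeta functions \eqref{eqn:zeta intro} agree with the zeta functions of $U_q(L\fg)$ up to Laurent monomials. In the simply-laced normalization these are \eqref{eqn:zeta intro particular}; in general one uses one arrow $i\to j$ with parameter $q^{d_{ij}}$ for $i\neq j$, and a loop at $i$ carrying $q^{d_{ii}}$, suitably split so that the zeta function has the required numerator. Because $d_{\sigma(i)\sigma(j)}=d_{ij}$, the automorphism $\sigma$ lifts to $Q$ and satisfies $q_{\sigma(\alpha)}=q_\alpha$.

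By \cite{NT} (following \cite{E,FO}), the associated double $\UU$, modulo $\ph^+_{i,0}\ph^-_{i,0}=1$, is $U_q(L\fg)$. Its shifted versions $\UU^{\sh}$ are the shifted quantum affinizations of Subsection~\ref{sub:simple modules}. Hence the simple modules $L(\bpsi)$, and their $q$-characters, coincide on both sides.

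\textbf{Step 2: identify the folded algebra with $U_q(L\fg^\sigma)$.} I compute the folded zeta functions \eqref{eqn:folded zeta intro} for this quiver and compare them with the zeta functions \eqref{eqn:zeta strongly twisted} that were used to define $U_q(L\fg^\sigma)$. The key identity is
\[
\prod_{k=1}^{m/m_{\oi}}\bigl(1-x^{m_{\oi}}\omega^{km_{\oi}}q^{m_{\oi}d}\bigr)\;\sim\;\prod_{k=1}^{m}\bigl(1-x\omega^k q^{d}\bigr)^{1/m_{\oi}\text{-adjusted}},
\]
which comes from $\prod_{\ell=1}^{m_{\oi}}(1-y\omega^{\ell m/m_{\oi}})=1-y^{m_{\oi}}$. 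This matches exactly the products appearing in relations \eqref{eqn:rel quantum 2 twisted intro}--\eqref{eqn:rel quantum 7 twisted intro}. The identification also takes care of two further points: the rescaling $e_{\sigma(i),d}=\omega^d e_{i,d}$ in \eqref{eqn:rel quantum 1 twisted intro}, and the factor $1/m_i$ in \eqref{eqn:rel quantum 9 twisted intro}.

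With this in place, $\oUU$ (respectively $\oUU^{\sh}$) is $U_q(L\fg^\sigma)$ (respectively its shifted version). Equally, the loop-weight folding map of Theorem~\ref{thm:main folding intro} coincides with \eqref{eqn:fold intro 2}. For finite type this is consistent with Proposition~\ref{prop:affine}, but for general $\fg$ it is essentially definitional.

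\textbf{Step 3: apply the theorem.} Since $\bpsi$ is acceptable in the sense of Definition~\ref{def:acceptable folding}, Theorem~\ref{thm:main folding intro} gives the claimed equality of $q$-characters. For the remark about $\CO^{\sh}_\BZ$, I would check that highest loop weights built from $Y_{i,x}$ and their inverses, together with the shifted prefundamental weights, with spectral parameters in $q^\BZ$, satisfy acceptability; this is what Corollary 4.8 asserts.

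\textbf{Main obstacle.} I expect the hardest part to be Step 2. One must track the Laurent-monomial ambiguities and the normalizations of the Cartan currents, in particular $q_i=q^{d_i}$ versus $q^{d_{ii}/2}$, and the $m_{\oi}$-fold structure at vertices fixed by powers of $\sigma$. The goal is to make the identification of modules, and not just of algebras, compatible with the definition of $q$-characters: the eigenvalues of $\ph^+_{\oi}(z)$ on the folded side must equal $\prod_k\psi_{\sigma^k(i_0)}(z\omega^k)$ exactly. My first approach is to compute the action on the highest weight vector and on one-step lowerings, and then use the fact that the pairing determines everything.
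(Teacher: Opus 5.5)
Your proposal is correct and follows the paper's route. Theorem~\ref{thm:main} is obtained in Corollary~\ref{cor:main} by applying Theorem~\ref{thm:main folding} to the tripled quiver of Remark~\ref{rem:quiver}, with three ingredients: the identification of the folded algebra with $U_q(L\fg^\sigma)$ is definitional (the latter is defined via the zeta functions \eqref{eqn:zeta strongly twisted}); \eqref{eqn:limit folded} follows from \eqref{eqn:zeta strongly twisted exchange}; and acceptability is witnessed by $P_i = q^{\BZ}$, which satisfies \eqref{eqn:condition folding} trivially and \eqref{eqn:disjoint folding} because $q$ is not a root of unity.

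Two small corrections. Your \emph{key identity} display is garbled: there is no fractional exponent, since grouping $k$ modulo $m/m_{\oi}$ and using $\prod_{\ell=1}^{m_{\oi}}(1-y\,\omega^{\ell m/m_{\oi}}) = 1-y^{m_{\oi}}$ gives an exact equality. Also, the module-level checks in your \emph{main obstacle} paragraph are unnecessary, because the folding of highest loop weights and the comparison of $q$-characters are already built into the definitions used in Theorem~\ref{thm:main folding}.
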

	
	\medskip
	
	\subsection{Applications and further directions}
	\label{sub:applications}
	
	The main consequence of Theorem \ref{thm:main} is that many results established for Grothendieck rings of untwisted quantum affine algebras can be transferred directly to the twisted case, including the monoidal categorification of cluster algebras via $K_0(\CO^{\sh}_{\BZ})$ \cite{GHL} and the extended $QQ$-systems \cite{FH Extended}. Our constructions and results also apply to the following problems:
	
	\medskip
	
	\begin{itemize}[leftmargin=*]
		
		\item relating Langlands dual quantum affine algebras \cite{FH Langlands,KO}, since the Langlands dual of $U_q(L\fg)$ for non simply-laced $\fg$ will be a twisted quantum affine algebra;
		
		\medskip
		
		\item solving quantum integrable models associated with twisted affine algebras, as their twisted $q$-characters are closely related to the transfer matrices of quantum integrable models, see for example \cite{FHR};
		
		\medskip
		
		\item shedding light on the geometric representation theory of foldings/coverings of quivers, following in the footsteps of \cite{Nak};
		
		\medskip
		
		\item developing a functorial relation between the category $\CO^{\sh}_{\BZ}$ and its twisted counterpart, which induces the isomorphism of Grothendieck rings \eqref{eqn:iso intro}. Note that for a certain subcategory $\CC^0_{\BZ}$ of $\CC_{\BZ}$ in type $A$, such a relation is studied in \cite{KKKO,KKO} using the quantum affine Schur-Weyl duality.
		
	\end{itemize}
	
	\medskip 
	
	\subsection{Plan of the paper}
	\label{sub:plan intro}
	
	In Section~\ref{sec:shuffle}, we 
	
	\medskip 
	
	\begin{itemize}[leftmargin=*]
		
		\item recall the definition and basic structure of quantum loop algebras $\UU$ and shuffle algebras $\CS^\pm$ associated to any quiver with parameters;
		
		\medskip 
		
		\item apply these notions to the case of symmetrizable Kac--Moody Lie algebras.
		
	\end{itemize}
	
	\medskip 
	
	\noindent In Section~\ref{sec:folding}, we  
	
	\medskip 
	
	\begin{itemize}[leftmargin=*]
		
		\item define the notion of folding shuffle algebras in the generality of Subsection~\ref{sub:folding intro};
		
		\medskip
		
		\item in the Kac--Moody case, state Proposition~\ref{prop:affine} relating our definition with that of twisted quantum affine algebras, and Proposition~\ref{prop:toroidal} on its toroidal analogue;
		
		\medskip  
		
		\item define the notion of coverings of quivers in the generality of Subsection~\ref{sub:covering intro}.
		
	\end{itemize}
	
	\noindent In Section~\ref{sec:modules}, we
	
	\medskip

	\begin{itemize}[leftmargin=*]
		
		\item recall the shuffle algebra incarnation of simple modules of (un)twisted quantum loop algebras and their $q$-characters from \cite{HN, N Cat}, following \cite{CP, FR, HJ};
		
		\medskip
		
		\item prove the variant Theorem~\ref{thm:main covering intro} first, and then adapt its proof to obtain Theorem~\ref{thm:main folding intro}. As we will explain in Corollary~\ref{cor:main}, this implies Theorem~\ref{thm:main}.
		
	\end{itemize}
	
	\medskip
	
	\noindent In the Appendix, we prove Propositions~\ref{prop:affine} and \ref{prop:toroidal}, thus relating our definition of twisted quantum affine/toroidal algebras with the ones more commonly found in the literature. This has the advantage of making the (rather mysterious at first) relations \eqref{eqn:rel quantum 0 twisted intro} completely explicit in affine/toroidal types.
	
	\medskip 
	
	\subsection{Acknowledgements} The authors would like to thank David Hernandez for his great help in our understanding (twisted and untwisted) quantum affine algebras, and Alexander Tsymbaliuk for many very useful conversations on shuffle algebras. We gratefully acknowledge the support of the Swiss National Science Foundation grant 10005316. The second-named author is partially supported by the Austrian Science Fund (FWF): PAT 9039323, Grant DOI 10.55776/PAT9039323.
	
	\bigskip
	
	\section{Shuffle algebras} 
	\label{sec:shuffle}
	
	\medskip
	
	\noindent We recall the basic theory of shuffle algebras associated to arbitrary quivers with parameters, which was developed in \cite{N Arbitrary} based on the original construction of \cite{E, FO}. 
	
	\medskip 
	
	\begin{itemize}[leftmargin=*]
		
		\item In Subsections~\ref{sub:zeta}-\ref{sub:double}, we present the dual theories of quantum loop algebras and shuffle algebras associated to quivers with parameters.
		
		\medskip 
		
		\item In Subsections~\ref{sub:Kac-Moody}-\ref{sub:square roots}, we apply the general theory above to symmetrizable Kac-Moody Lie algebras $\fg$ (generalizing the case of finite type $\fg$ from \cite{Dr}).
		
	\end{itemize}
	
	\medskip 
	
	\subsection{Quivers and zeta functions} 
	\label{sub:zeta}
	
	The set $\BN$ will contain 0 in the present paper, and we will work over an arbitrary field $\BK$ of characteristic 0. Consider a quiver $Q$ with vertex set $I$. Assume that every quiver arrow
	\[
	\alpha : i \rightarrow j
	\]
	is endowed with a parameter $q_{\alpha} \in \BK^*$. This allows us to construct rational functions
	\begin{equation}
		\label{eqn:def zeta}
		\zeta_{ij}(x) \sim (1-x)^{-\delta_{ij}} \prod_{\alpha: i \rightarrow j} (1-xq_\alpha)
	\end{equation}
	for all $i,j \in I$. The notation $\sim$ means ``equality up to a Laurent monomial in $x$", whose particular value will be unimportant to us for the time being. 
	
	\medskip
	
	\begin{definition}
		\label{def:pre-quantum}
		
		The (positive half of the) \emph{pre-quantum loop algebra} associated to the data $\{\zeta_{ij}(x)\}_{i,j \in I}$ is 
		\[
		\tUUp = \BK \Big \langle e_{i,d} \Big \rangle_{i \in I, d \in \BZ} \Big/ \Big(\text{relation \eqref{eqn:rel quad}}\Big)
		\]
		where we impose the following relation for all $i,j \in I$
		\begin{equation}
			\label{eqn:rel quad}
			e_i(x) e_j(y) \zeta_{ji} \left(\frac yx\right) = e_j(y) e_i(x) \zeta_{ij} \left( \frac xy \right)
		\end{equation}
		Above and henceforth, we use the notation:
		\[
		e_i(x) = \sum_{d \in \BZ} \frac {e_{i,d}}{x^d}
		\]
		for all $i \in I$, and formula \eqref{eqn:rel quad} is interpreted as an infinite collection of relations obtained by equating the coefficients of all $\{x^a y^b\}_{a,b\in \BZ}$ in the left and right-hand sides (if $i = j$, one clears the denominators $x-y$ from \eqref{eqn:rel quad} before equating coefficients). 
		
	\end{definition}
	
	\medskip
	
	\subsection{Big shuffle algebras}
	\label{sub:def shuf}
	
	Consider an infinite collection of variables $z_{i1},z_{i2},\dots$ for all $i \in I$. For any $\bn = (n_i)_{i \in I} \in \nn$, we will write $\bn! = \prod_{i \in I} n_i!$ and
	\[
	\fS_{\bn} = \prod_{i \in I} \fS_{n_i}
	\]
	where $\fS_n$ denotes the symmetric group on $n$ letters. A function $F(z_{i1},\dots,z_{in_i})_{i \in I}$ is called \emph{color-symmetric} if it is fixed by the natural $\fS_{\bn}$ action on its variables. The following construction generalizes the trigonometric shuffle algebras of \cite{E, FO} in a straightforward manner.
	
	\medskip
	
	\begin{definition}
		\label{def:big shuf}
		
		The \emph{big shuffle algebra} associated to the data $\{\zeta_{ij}(x)\}_{i,j \in I}$ is
		\[
		\CV^- = \bigoplus_{\bn \in \BN^I} \CV_{-\bn}, \quad \text{where} \quad \CV_{-\bn} = \BK[z_{i1},z_{i1}^{- 1},\dots,z_{in_i}, z_{in_i}^{-1}]_{i \in I}^{\fS_{\bn}}
		\]
		endowed with the multiplication
		\begin{equation}
			\label{eqn:shuf prod}
			\begin{split}
				&R(z_{i1},\dots,z_{in_i})_{i \in I} * R'(z_{i1},\dots,z_{in_i'})_{i \in I} = \\
				\eSym_{\fS_{\bn+\bn'}}& \left[ \frac {R(z_{i1},\dots,z_{in_i})_{i \in I} R'(z_{i,n_i+1},\dots,z_{i,n_i+n_i'})_{i \in I}}{\bn! \bn'!} \mathop{\prod^{i,j \in I}_{1\leq a\leq n_i}}_{n_j < b \leq n_j+n_j'} \zeta_{ji} \left(\frac {z_{jb}}{z_{ia}} \right) \right]
			\end{split}
		\end{equation}
		where $\eSym_{\fS_{\bn+\bn'}}$ denotes summation over the orbits of the group $\fS_{\bn+\bn'}$ \footnote{Although the zeta functions might seem to contribute simple poles at $z_{ia}-z_{ib}$ for $a \neq b$ to the right-hand side of \eqref{eqn:shuf prod}, these poles disappear when taking the symmetrization (the poles in question can only have even order in any symmetric rational function).}. 
		
	\end{definition} 
	
	\medskip 
	
	\noindent We let $\CV^+ = \CV^{-,\text{op}}$, which means that one must replace in \eqref{eqn:shuf prod}
	\[
	\zeta_{ji} \left(\frac {z_{jb}}{z_{ia}} \right) \quad \text{by} \quad \zeta_{ij} \left(\frac {z_{ia}}{z_{jb}} \right) 
	\]
	to get the multiplication in $\CV^+$. The grading of $\CV^+$ will be denoted by $\bigoplus_{\bn \in \nn} \CV_{\bn}$.
	
	\medskip

	\subsection{Small shuffle algebras} 
	\label{sub:upsilon}
	
	The opposite algebra to that of Definition~\ref{def:pre-quantum} is
	\[
	\tUUm = \BK \Big \langle f_{i,d} \Big \rangle_{i \in I, d \in \BZ} \Big / \Big(\text{relation \eqref{eqn:opposite}}\Big)
	\]
	with the following relation for all $i,j \in I$:
	\begin{equation}
		\label{eqn:opposite}
		f_j(y) f_i(x) \zeta_{ji}\left(\frac yx \right) = f_i(x)f_j(y) \zeta_{ij} \left( \frac xy \right) 
	\end{equation}
	for all $i,j \in I$, where $f_i(x) = \sum_{d \in \BZ} \frac {f_{i,d}}{x^d}$. There exist algebra homomorphisms
	\begin{equation}
		\label{eqn:tilde upsilon}
		\tUpsilon^\pm : \tUUpm \rightarrow \CV^\pm, \qquad e_{i,d} \mapsto z_{i1}^d \in \CV_{\bs^i}, \quad f_{i,d} \mapsto z_{i1}^d \in \CV_{-\bs^i}
	\end{equation}
	for all $i \in I$, $d \in \BZ$, where $\bs^i =(0,\dots,0,1,0,\dots,0)$ with the $1$ on the $i$-th position. Indeed, proving the well-defined-ness of the homomorphisms \eqref{eqn:tilde upsilon} entails checking that relations \eqref{eqn:rel quad} and \eqref{eqn:opposite} are respected by the shuffle product \eqref{eqn:shuf prod} and its opposite, which is straightforward. The maps \eqref{eqn:tilde upsilon} are neither injective nor surjective in general, and an important role will be played by the \emph{small shuffle algebras}
	\begin{equation}
		\label{eqn:small}
		\CS^\pm = \text{Im }\widetilde{\Upsilon}^\pm
	\end{equation}
	If we define the (half) \emph{quantum loop algebras} as
	\[
	\UUpm = \tUUpm \Big/ \Big(\text{Ker }\widetilde{\Upsilon}^\pm\Big)
	\]
	then the first isomorphism theorem implies that $\tUpsilon^\pm$ descends to isomorphisms
	\begin{equation}
		\label{eqn:upsilon}
		\Upsilon^\pm : \UUpm \rightarrow \CS^\pm
	\end{equation}
	
	\medskip
	
	\subsection{The pairing}
	\label{sub:pairing}
	
	Let us consider the following notation for all rational functions $G(z_1,\dots,z_n)$ homogeneous of degree 0. If $Dz_a = \frac {dz_a}{2\pi i z_a}$, then we will write
	\begin{equation}
		\label{eqn:contour integral}
		\int_{|z_1| \gg \cdots \gg |z_n|} G(z_1,\dots,z_n) \prod_{a=1}^n Dz_a
	\end{equation}
	for the constant term in the expansion of $G$ as a power series in 
	\[
	\frac {z_2}{z_1}, \dots, \frac {z_n}{z_{n-1}}
	\]
	The notation in \eqref{eqn:contour integral} is motivated by the fact that if $\BK = \BC$, one could compute this constant term as a contour integral (with the contours being concentric circles, situated very far from each other compared to the absolute values of the coefficients of $G$). We define $\int_{|z_1| \ll \cdots \ll |z_n|} G(z_1,\dots,z_n) \prod_{a=1}^n Dz_a$ analogously.
	
	\medskip
	
	\begin{definition}
		\label{def:pair}
		
		There exist bilinear pairings
		\begin{align}
			&\tUUp \otimes \CV^- \xrightarrow{\langle \cdot, \cdot \rangle} \BK \label{eqn:pair} \\
			&\CV^+ \otimes \tUUm \xrightarrow{\langle \cdot, \cdot \rangle} \BK \label{eqn:pair opposite}
		\end{align}
		given for all homogeneous $E \in \CV_{\bn}$, $F \in \CV_{-\bn}$ and $i_1,\dots,i_n \in I$, $d_1,\dots,d_n \in \BZ$ by
		\begin{align}
			&\Big \langle e_{i_1,d_1} \cdots e_{i_n,d_n}, F \Big \rangle = \int_{|z_1| \gg \cdots \gg |z_n|} \frac {z_1^{d_1}\cdots z_n^{d_n} F(z_1,\dots,z_n)}{\prod_{1\leq a < b \leq n} \zeta_{i_bi_a} \left(\frac {z_b}{z_a} \right)} \prod_{a=1}^n Dz_a \label{eqn:pair formula} \\
			&\Big \langle E, f_{i_1,d_1} \cdots  f_{i_n,d_n} \Big \rangle = \int_{|z_1| \ll \cdots \ll |z_n|} \frac {z_1^{d_1}\cdots z_n^{d_n} E(z_1,\dots,z_n)}{\prod_{1\leq a < b \leq n} \zeta_{i_ai_b} \left(\frac {z_a}{z_b} \right)} \prod_{a=1}^n Dz_a 
			\label{eqn:pair formula opposite}
		\end{align}
		If $\bs^{i_1}+\cdots +\bs^{i_n} \neq \bn$ or $E,F$ fail to have degree $-d_1-\dots-d_n$, then the pairings above are defined to be $0$. 
		
	\end{definition}
	
	\medskip
	
	\noindent In the right-hand sides of \eqref{eqn:pair formula} and \eqref{eqn:pair formula opposite}, we implicitly identify
	\begin{equation}
		\label{eqn:relabeling}
		z_a \quad \text{with} \quad z_{i_a\bullet_a}, \quad \forall a \in \{1,\dots, n\}
	\end{equation}
	where $\bullet_1,\dots,\bullet_n$ are the minimal positive integers such that $\bullet_a<\bullet_b$ if $a<b$ and $i_a = i_b$. We call \eqref{eqn:relabeling} a \emph{relabeling} of the variables of $(E\text{ or }F)(z_{i1},\dots,z_{in_i})_{i \in I}$ in accordance with $i_1,\dots,i_n$. The following result was proved in \cite[Theorem 2.5]{N Arbitrary}.
	
	\medskip 
	
	\begin{theorem}
		\label{thm:pairings}
		
		The kernels of $\tUpsilon^\pm$ pair trivially with $\CS^\mp$ under the pairings \eqref{eqn:pair} and \eqref{eqn:pair opposite}, and so descend to pairings
		\begin{align*}
			&\UUp \otimes \CS^- \xrightarrow{\langle \cdot, \cdot \rangle} \BK \\
			&\CS^+ \otimes \UUm \xrightarrow{\langle \cdot, \cdot \rangle} \BK 
		\end{align*}
		which are non-degenerate in both arguments. The pairings above induce two pairings
		\begin{equation}
			\label{eqn:ultimate pairing}
			\UUp \otimes \UUm \xrightarrow{\langle \cdot, \cdot \rangle} \BK
		\end{equation}
		(under the isomorphism \eqref{eqn:upsilon}) which coincide. 
		
	\end{theorem}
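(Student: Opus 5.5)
Our plan has four steps: a shuffle-product formula for the pairing, the vanishing of the kernels against $\CS^\mp$, non-degeneracy via duality, and the comparison of the two induced pairings.

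\textbf{Shuffle-product formula.} We first rewrite \eqref{eqn:pair formula} so that it sees only the image $\tUpsilon^+(e_{i_1,d_1}\cdots e_{i_n,d_n})$, rather than the word itself. Since $F$ is color-symmetric, we symmetrize the integrand over $\fS_{\bn}$. The contours $|z_1| \gg \cdots \gg |z_n|$ and the factor $\prod_{a<b}\zeta_{i_bi_a}(z_b/z_a)^{-1}$ then combine with $F$. The key identity we aim for expresses $\langle e_{i_1,d_1}\cdots e_{i_n,d_n}, F\rangle$ as a single symmetric contour integral of the form
\[
\int \frac{F(z)\, z_1^{d_1}\cdots z_n^{d_n}\prod_{a<b}\zeta_{i_ai_b}(z_a/z_b)}{\prod_{a\neq b}\zeta_{i_ai_b}(z_a/z_b)}\,Dz,
\]
to be compared with $\tUpsilon^+$ of the word, which is $\eSym$ of $z^{d}\prod_{a<b}\zeta_{i_ai_b}(z_a/z_b)$. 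This should be done first for $F \in \CS^-$, since contour moves require control of poles.

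\textbf{Kernels pair trivially.} For $F = \tUpsilon^-(f_{j_1,c_1}\cdots f_{j_n,c_n})$, we expand $F$ as a symmetrization of $\prod_{a<b}\zeta_{j_bj_a}(w_b/w_a)$ times monomials. The pairing $\langle x, F\rangle$ for $x \in \tUUp$ then becomes a sum of iterated residue integrals. The $\zeta$ factors of $F$ cancel the denominators in \eqref{eqn:pair formula}, except along poles of the form $1-z_b/z_a$ and $(1-q_\alpha z_b/z_a)$ in a controlled way. Our claim is that the result equals a bilinear expression $\Phi\big(\tUpsilon^+(x), f_{j_1,c_1}\cdots f_{j_n,c_n}\big)$ computed by \eqref{eqn:pair formula opposite}. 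In other words, we want the symmetric identity
\[
\langle x, \tUpsilon^-(y)\rangle = \langle \tUpsilon^+(x), y\rangle .
\]
We prove this by a direct contour-deformation argument: move the contours from $|z_1|\gg\cdots$ to $|z_1|\ll\cdots$. The poles crossed are exactly the ones killed by symmetrization, as in the footnote to Definition~\ref{def:big shuf}. This identity is the heart of the proof. If $\tUpsilon^+(x)=0$ the right-hand side vanishes, so $\ker\tUpsilon^+$ pairs trivially with $\CS^-$. The same identity, read in the other direction, handles $\ker \tUpsilon^-$ against $\CS^+$. This gives well-defined pairings $\UUp\otimes\CS^-\to\BK$ and $\CS^+\otimes\UUm\to\BK$. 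Moreover, both induced pairings $\UUp\otimes\UUm\to\BK$ equal the common value of the displayed identity, so they coincide.

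\textbf{Non-degeneracy.} In the second argument, suppose $F\in\CS^-$ pairs to zero with all words $e_{i_1,d_1}\cdots e_{i_n,d_n}$. We would choose the ordering $i_1,\dots,i_n$ and extract coefficients. The integral \eqref{eqn:pair formula} with all monomials $z^d$ computes every coefficient of the expansion of $F/\prod_{a<b}\zeta_{i_bi_a}$ in the region $|z_1|\gg\cdots$, so this expansion is zero. The expansion map is injective on rational functions, hence $F=0$. Non-degeneracy in the first argument follows: if $x\in\UUp$ pairs trivially with $\CS^-$, the symmetric identity gives $\langle\tUpsilon^+(x),y\rangle=0$ for all $y$. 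The same coefficient-extraction argument for \eqref{eqn:pair formula opposite} shows $\tUpsilon^+(x)=0$, i.e.\ $x=0$ in $\UUp$.

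The main obstacle is the contour-deformation step establishing $\langle x,\tUpsilon^-(y)\rangle=\langle\tUpsilon^+(x),y\rangle$. One must track residues at $z_a=z_b$ and at $z_a = q_\alpha z_b$ for general quivers with arbitrary parameters, and show that they cancel after summing over the symmetric group. We would attempt this by induction on $n$, peeling off the variable with the largest (resp.\ smallest) contour.
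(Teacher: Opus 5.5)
Your reduction is right. Granting the identity $\langle x,\tUpsilon^-(y)\rangle=\langle\tUpsilon^+(x),y\rangle$ for all $x\in\tUUp$, $y\in\tUUm$, everything else follows as you say: the vanishing on kernels, the coincidence of the two pairings on $\UUp\otimes\UUm$, and non-degeneracy in both arguments (your coefficient-extraction argument is correct). The paper itself does not reprove the theorem; it cites \cite{N Arbitrary}.

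The gap is the identity itself. You leave it open, and the mechanism you propose for it would not work. Deforming $|z_1|\gg\cdots\gg|z_n|$ into $|z_1|\ll\cdots\ll|z_n|$ crosses genuine poles at the zeros of the factors $\zeta_{i_bi_a}(z_b/z_a)$ in the denominator of \eqref{eqn:pair formula}. Symmetrization does not remove these poles; the footnote to Definition~\ref{def:big shuf} only concerns the same-color poles at $z_{ia}=z_{ib}$.

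Already $n=2$ shows the problem. Take $i\neq j$, $x=e_{i,d_1}e_{j,d_2}$ and $y=f_{j,c_1}f_{i,c_2}$. Both sides equal the constant term of $z_1^{d_1+c_2}z_2^{d_2+c_1}\zeta_{ij}(z_1/z_2)/\zeta_{ji}(z_2/z_1)$ expanded in $|z_1|\gg|z_2|$. There is a single term and nothing to symmetrize. Moving to $|z_1|\ll|z_2|$ changes the value by the residues at the zeros of $\zeta_{ji}(z_2/z_1)$, which are nonzero in general.

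Your first step has a related problem. A formula on a symmetric contour is not available for arbitrary parameters: reaching $|z_1|=\cdots=|z_n|$ is exactly what requires wheel conditions (cf.\ the proof of Claim~\ref{claim:key}). Nothing later in your argument uses it, so drop it.

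The missing idea is that no contour needs to move at all. You were close to this when you noted that the $\zeta$ factors of $F$ cancel most of the denominators.

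\textbf{Term-by-term comparison.} Expand $\tUpsilon^-(y)$ inside \eqref{eqn:pair formula} and $\tUpsilon^+(x)$ inside \eqref{eqn:pair formula opposite} as sums over color-preserving bijections $\pi$ between the letters of the word $x$ and those of the word $y$. Then compare the two sides term by term.

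\textbf{Equal integrands.} For fixed $\pi$ the two integrands are literally equal. Their ratio is the product of $\zeta_{kl}(u/v)\zeta_{lk}(v/u)$ over all unordered pairs $\{u,v\}$ of variables (of colors $k,l$), divided by the same product.

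\textbf{Only inverted pairs survive.} After cancellation, the common integrand contains a factor $\zeta_{kl}(u/v)/\zeta_{lk}(v/u)$ only for pairs inverted by $\pi$, meaning $u$ precedes $v$ in $x$ but follows it in $y$. For exactly these pairs, both prescriptions $|z_1|\gg\cdots\gg|z_n|$ and $|w_1|\ll\cdots\ll|w_n|$ say $|u|\gg|v|$. For non-inverted pairs the prescriptions disagree, but the integrand contains no factor that is a function of $u/v$.

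\textbf{Conclusion.} Both expansions are therefore the same product of series in the monomials $v/u$ over the inverted pairs. These monomials are simultaneously small for either ordering, so the constant terms agree term by term. There are no residues, no cancellation over the symmetric group, and no induction on $n$.
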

	
	\medskip
	
	\subsection{Double algebras}
	\label{sub:double}
	
	We call the collection of zeta functions \eqref{eqn:def zeta} \emph{balanced} if
	\begin{equation}
		\label{eqn:limit}
		\lim_{x \rightarrow \infty}
		\frac {\zeta_{ij}(x)}{\zeta_{ji} (x^{-1})} < \infty 
	\end{equation}
	for all $i,j \in I$. Whenever this condition holds, we consider the extended algebras
	\begin{align}
		&\UUg = \frac {\UUp[\ph^+_{i,0}, \ph^+_{i,1}, \ph^+_{i,2}, \dots]_{i \in I}}{\left( \ph^+_i(x) e_j(y) = e_j(y) \ph^+_i(x) \frac {\zeta_{ij} \left(\frac xy \right)}{\zeta_{ji} \left(\frac yx \right)} \right)} \label{eqn:uug} \\
		&\UUl = \frac {\UUm[\ph^-_{i,0}, \ph^-_{i,1}, \ph^-_{i,2}, \dots]_{i \in I}}{\left( \ph^-_i(x) f_j(y) = f_j(y) \ph^-_i(x) \frac {\zeta_{ji} \left(\frac yx \right)}{\zeta_{ij} \left(\frac xy \right)} \right)} \label{eqn:uul}
	\end{align}
	where
	\[
	\ph^\pm_i(x) = \sum_{d=0}^{\infty} \frac {\ph^\pm_{i,d}}{x^{\pm d}} 
	\]
	and the quotient relations in \eqref{eqn:uug} (respectively \eqref{eqn:uul}) are interpreted by expanding them as power series in negative (respectively positive) powers of $\frac xy$. Following Drinfeld, we can make $\UUg$ and $\UUl$ into topological bialgebras via the coproduct
	\begin{align*}
		& \Delta(\ph^\pm_i(x)) = \ph^\pm_i(x) \otimes \ph^\pm_i(x) \\
		& \Delta(e_i(x)) = \ph^+_i(x) \otimes e_i(x) + e_i(x) \otimes 1 \\
		& \Delta(f_i(x)) = 1 \otimes f_i(x) + f_i(x) \otimes \ph^-_i(x)
	\end{align*}
	It is straightforward to check that the pairing \eqref{eqn:ultimate pairing} extends to a bialgebra pairing
	\[
	\UUg \otimes \UUl \xrightarrow{\langle \cdot, \cdot \rangle} \BK 
	\]
	via 
	\[
	\Big \langle \ph^+_i(x), \ph^-_i(y) \Big \rangle = \frac {\zeta_{ij} \Big(\frac xy \Big)}{\zeta_{ji} \Big(\frac yx \Big)} \quad \text{expanded as } |x| \gg |y|
	\]
	and properties $\langle a, bb'  \rangle = \langle \Delta(a), b \otimes b' \rangle$ and $\langle aa', b \rangle =\langle a' \otimes a, \Delta(b)  \rangle$ for all $a,a' \in \UUg$ and $b,b' \in \UUl$. There are also antipode maps on $\UUg$ and $\UUl$ satisfying the usual properties in a topological Hopf algebra.
	
	\medskip
	
	\begin{definition}
		\label{def:quantum}
		
		For any balanced collection of zeta functions \eqref{eqn:def zeta}, the \textbf{quantum loop algebra} is defined as
		\begin{equation}
			\label{eqn:quantum}
			\UU = \UUg \otimes \UUl
		\end{equation}
		with the multiplication governed by the Drinfeld double relation
		\begin{equation}
			\label{eqn:dd}
			(a \otimes b) (a' \otimes b') = \Big \langle S(a'_{1}), b_{1} \Big \rangle aa'_{2} \otimes b_{2}  b'\Big \langle a'_{3}, b_{3} \Big \rangle
		\end{equation}
		for any $a,a' \in \UUg$, $b,b' \in \UUl$ with their coproducts written in Sweedler notation
		\[
		\Delta^{(2)}(a') = a'_{1} \otimes a'_{2} \otimes a'_{3} \qquad \text{and} \qquad \Delta^{(2)}(b) =  b_{1} \otimes b_{2} \otimes b_{3}
		\]
		
	\end{definition}
	
	\medskip
	
	\noindent Using relation \eqref{eqn:dd}, it is straightforward to deduce commutation relations between $e_i(x), \ph^+_i(x)$ and $f_j(y),\ph^-_j(y)$. Thus, the full set of relations in $\UU$ is
	\begin{equation}
		\label{eqn:rel quantum 1}
		\Big(\text{any element of Ker }\widetilde{\Upsilon}^\pm \Big) = 0
	\end{equation} 
	\begin{equation}
		\label{eqn:rel quantum 2}
		e_i(x) e_j(y) \zeta_{ji} \left(\frac yx\right) = e_j(y) e_i(x) \zeta_{ij} \left( \frac xy \right)
	\end{equation}
	\begin{equation}
		\label{eqn:rel quantum 3}
		f_j(y) f_i(x) \zeta_{ji} \left(\frac yx\right) = f_i(x) f_j(y) \zeta_{ij} \left( \frac xy \right)
	\end{equation}
	\begin{equation}
		\label{eqn:rel quantum 4}
		\left[\ph^+_{i}(x), \ph^+_{j}(y)\right] = \left[\ph^+_{i}(x), \ph^-_{j}(y)\right] = \left[\ph^-_{i}(x), \ph^-_{j}(y)\right] = 0 
	\end{equation} 
	\begin{align}
		&\ph^\pm_i(x) e_j(y) = e_j(y) \ph^\pm_i(x) \frac {\zeta_{ij} \Big(\frac xy \Big)}{\zeta_{ji} \Big(\frac yx \Big)} \label{eqn:rel quantum 5} \\
		&f_j(y)\ph^\pm_i(x)  = \ph^\pm_i(x)  f_j(y) \frac {\zeta_{ij} \Big(\frac xy \Big)}{\zeta_{ji} \Big(\frac yx \Big)} \label{eqn:rel quantum 7} 
	\end{align}
	\begin{equation}
		\label{eqn:rel quantum 9}
		\Big[e_{i}(x), f_{j}(y)\Big] = \delta_{ij} \delta \left(\frac xy \right) \Big( \ph^-_j(y) - \ph^+_i(x)\Big)
	\end{equation} 
	for all $i,j \in I$, where $\delta(x) = \sum_{d \in \BZ} x^d$ is a formal series, and $\delta_{ij}$ is the Kronecker delta. Note that in relations \eqref{eqn:rel quantum 2}-\eqref{eqn:rel quantum 3}, if $i=j$ one first clears the denominator $x-y$ before equating the left and right-hand sides. By construction, we have
	\[
	\UU \cong \UUp \otimes \UUo \otimes \UUm
	\]
	where the three factors denote the algebras generated by $e_{i,d}$, $\ph^\pm_{i,d}$, $f_{i,d}$ (respectively) modulo the relations among \eqref{eqn:rel quantum 1}-\eqref{eqn:rel quantum 9} that pertain to the respective generators.
	
	\medskip 
	
	\subsection{Kac-Moody Lie algebras} 
	\label{sub:Kac-Moody}
	
	Arguably the most important example (as well as the initial motivation) of the quantum loop algebra $\UU$ of \eqref{eqn:quantum} is that of quantum affine algebras. We will actually recall the construction in the wider generality of a symmetrizable Kac-Moody Lie algebra $\fg$, which is determined by a set $I$ and a generalized symmetrizable Cartan matrix 
	\[
	\left(c_{ij} = \frac {2d_{ij}}{d_{ii}} \in \BZ \right)_{i,j\in I}
	\]
	Above, $d_{ij} = d_{ji} \in \BZ_{\leq 0}$ for all $i \neq j$, while $d_{ii} \in 2 \BZ_{>0}$. We will write
	\begin{equation}
		\label{eqn:two quantum loop}
		\widetilde{U}_q(L\fg) \twoheadrightarrow U_q(L\fg) 
	\end{equation}
	for the algebras $\tUU \twoheadrightarrow \UU$ of Subsections~\ref{sub:zeta}-\ref{sub:double} associated to the zeta functions 
	\begin{equation}
		\label{eqn:zeta strongly}
		\zeta_{ij}(x) \sim \frac {1-xq^{d_{ij}}}{(1-x)^{\delta_{ij}}}
	\end{equation}
	where $\sim$ means that the two sides of \eqref{eqn:zeta strongly} differ by a monomial chosen such that
	\begin{equation}
		\label{eqn:zeta strongly exchange}
		\frac {\zeta_{ij}(x)}{\zeta_{ji}(x^{-1})} = \frac {xq^{d_{ij}}-1}{x-q^{d_{ij}}}
	\end{equation}
	(an asymmetric choice for said monomial is $(-x)^{-\delta_{i>j}}$ for an arbitrary total order on the set $I$, and a more symmetric choice will be given in Subsection~\ref{sub:square roots}). As customary, we also impose the relation $\ph_{i,0}^+\ph_{i,0}^- = 1, \forall i\in I$ when defining $U_q(L\fg)$.  
	
	\medskip 
	
	\begin{remark}
		\label{rem:quiver} 
		
		In order to see that \eqref{eqn:zeta strongly} is a particular case of \eqref{eqn:def zeta}, note that the quiver $Q$ from Subsection~\ref{sub:zeta} is not the Dynkin diagram of $\fg$, even though its vertex set is the same as $I$. Instead, $Q$ has a single arrow from every $i \in I$ to every $j \in I$ (including loops that correspond to $i=j$) with parameter $q^{d_{ij}}$. In Subsection~\ref{sub:twisted quantum loop}, we will show how to remove those arrows with $d_{ij}=0$ from the quiver $Q$, all the while obtaining isomorphic quantum loop and shuffle algebras. The resulting quiver $Q$ will be called the tripled quiver associated to the Dynkin diagram of $\fg$, due to the presence of the arrows going both ways and the loops at all the vertices. 
		
	\end{remark}
	
	\medskip
	
	\noindent The notation in \eqref{eqn:two quantum loop} is motivated by the case of a simple finite-dimensional Lie algebra $\fg$, in which case the Drinfeld-Beck isomorphism (\cite{B, Dr}) shows that $U_q(L\fg)$ is isomorphic to the quantum affine algebra $U_q(\widehat{\fg})|_{c=1}$ with trivial central charge.
	
	\medskip 
	
	\begin{remark}
		\label{rem:symmetric}
		
		When $d_{ii} = 2, \forall i \in I$, we call the Kac-Moody Lie algebra $\fg$ simply-laced. In this case, the quantum loop algebra $U_q(L\fg)$ was fully described in \cite{N Symmetric}.
		
	\end{remark}
	
	\medskip 
	
	\begin{definition}
		\label{def:strongly}
		
		A Kac-Moody Lie algebra is called \emph{strongly symmetrizable} if 
		\[
		d_{ij} \in \left\{0,-\max\left(\frac {d_{ii}}2, \frac {d_{jj}}2 \right) \right\}
		\]
		for all $i \neq j$. All finite type and affine Lie algebras are strongly symmetrizable, except for affine $A_1$ (which instead fits into the framework of Remark~\ref{rem:symmetric}).
		
	\end{definition}
	
	\medskip 
	
	\noindent For any symmetrizable Kac-Moody Lie algebra $\fg$, we have
	\[
	\widetilde{U}_q(L\fg) = \BC \Big \langle e_{i,d}, f_{i,d}, \ph^\pm_{i,d'} \Big \rangle_{i \in I, d \in \BZ, d' \geq 0} \Big/ \Big(\text{relations \eqref{eqn:rel quantum 1 strongly}-\eqref{eqn:rel quantum 8 strongly}} \Big) 
	\]
	\begin{equation}
		\label{eqn:rel quantum 1 strongly}
		e_i(x) e_j(y) (x - yq^{d_{ij}}) = e_j(y) e_i(x) (xq^{d_{ij}} - y)
	\end{equation}
	\begin{equation}
		\label{eqn:rel quantum 2 strongly}
		f_j(y) f_i(x)  (x - yq^{d_{ij}}) = f_i(x) f_j(y) (xq^{d_{ij}} - y)
	\end{equation}
	\begin{equation}
		\label{eqn:rel quantum 3 strongly}
		\left[\ph^+_{i}(x), \ph^+_{j}(y)\right] = \left[\ph^+_{i}(x), \ph^-_{j}(y)\right] = \left[\ph^-_{i}(x), \ph^-_{j}(y)\right] = 0, \ \ph^+_{i,0}\ph^-_{i,0} = 1
	\end{equation} 
	\begin{align}
		&\ph^\pm_i(x) e_j(y) = e_j(y) \ph^\pm_i(x) \frac {xq^{d_{ij}}-y}{x-yq^{d_{ij}}} \label{eqn:rel quantum 4 strongly} \\
		&f_j(y)\ph^\pm_i(x)  = \ph^\pm_i(x)  f_j(y) \frac {xq^{d_{ij}}-y}{x-yq^{d_{ij}}} \label{eqn:rel quantum 6 strongly} 
	\end{align}
	\begin{equation}
		\label{eqn:rel quantum 8 strongly}
		\Big[e_{i}(x), f_{j}(y)\Big] = \delta_{ij} \delta \left(\frac xy \right) \frac {\ph^+_i(x) - \ph^-_j(y)}{q_i-q_i^{-1}}
	\end{equation} 
	for all $i,j \in I$ \footnote{One goes from \eqref{eqn:rel quantum 9} to the more standard \eqref{eqn:rel quantum 8 strongly} by rescaling the pairings \eqref{eqn:pair formula}, \eqref{eqn:pair formula opposite} by 
		\[
		(q_{i_1}^{-1}-q_{i_1})^{-1} \cdots (q_{i_n}^{-1}-q_{i_n})^{-1},
		\]
		which does not change anything in the overall theory. Here we write $q_i=q^{\frac {d_{ii}}2}$, as is conventional.}. In order to obtain the quotient $U_q(L\fg)$ of $\widetilde{U}_q(L\fg)$, we must impose additional relations, namely \eqref{eqn:rel quantum 1} for the zeta functions \eqref{eqn:zeta strongly}. In the following Subsection, we will show how to interpret such relations from the point of view of shuffle algebras, and recover the well-known Drinfeld-Serre relations when $\fg$ is strongly symmetrizable.
	
	\medskip 
	
	\subsection{Feigin-Odesskii wheel conditions and relations} 
	\label{sub:wheel conditions}
	
	It was shown in \cite{N New} that for a strongly symmetrizable Kac-Moody Lie algebra $\fg$, the corresponding shuffle algebra (defined with respect to the zeta functions \eqref{eqn:zeta strongly}) is completely described by
	\[
	\CS^\mp = \Big\{\text{Laurent polynomials satisfying the wheel conditions \eqref{eqn:wheel strongly}}\Big\} \subset \CV^\mp
	\]
	where a color-symmetric Laurent polynomial $F(z_{i1},\dots,z_{in_i})_{i \in I}$ is said to satisfy the \emph{wheel conditions} (initially discovered by \cite{FO}) if for all $i \neq j$ in $I$ we have
	\begin{equation}
		\label{eqn:wheel strongly}
		F\Big|_{(z_{i1},z_{i2}, \dots, z_{i,-c_{ij}}, z_{i,1-c_{ij}}) \mapsto (z_{j1} q^{d_{ij}}, z_{j1} q^{d_{ij}+d_{ii}},  \dots, z_{j1} q^{-d_{ij}-d_{ii}}, z_{j1} q^{-d_{ij}})} =  0
	\end{equation}
	The terminology arises from the following picture
	\[
	\xymatrix{& & z_{j1} \ar@{-}[rrd] & & \\
		z_{i1} \ar@{-}[rru] &  z_{i2} \ar@{-}[l] & \cdots \ar@{-}[l] &  z_{i,-c_{ij}} \ar@{-}[l] & z_{i,1-c_{ij}} \ar@{-}[l]}
	\]
	which we interpret as follows: imagine that the symbols $z_{ia}$ are placed upon a real line at successive distance $d_{ii}$ apart. When we set
	\[
	z_{ia} \mapsto q^{\text{horizontal coordinate denoted by ``$z_{ia}$" above}}
	\]
	and similarly for $z_{j1}$, the specialization of $F$ is 0.
	
	\medskip 
	
	\begin{example} 
		\label{ex:easy wheels}
		
		If $d_{ij} = 0$, then the wheel conditions state that
		\[
		F \Big |_{z_{i1} \mapsto z_{j1}} = 0 
		\]
		which implies that any $F \in \CS^\pm$ is divisible by $z_{ia}-z_{jb}$ for all $a,b$. If $d_{ij} = -1$, then the wheel conditions correspond to a triangle, so any $F \in \CS^\pm$ satisfies
		\[
		F \Big |_{z_{i1} \mapsto z_{j1} q^{-1}, z_{i2} \mapsto z_{j1} q} = 0 
		\]
	\end{example} 
	
	\medskip 
	
	\noindent In \cite{N Reduced}, the first-named author considered the following interplay between wheel conditions and relations in quantum loop algebras: suppose we wish to consider Laurent polynomials $F$ which vanish when a certain subset $z_1,\dots,z_k$ of their variables are specialized according to the so-called \emph{wheel}
	\begin{equation}
		\label{eqn:general wheel}
		\bigcirc = \Big\{ z_k = z_{k-1} q_k, \dots, z_2 = z_1 q_2, z_1 = z_k q_1 \Big\}
	\end{equation}
	where each $q_a$ is the parameter of an arrow $i_{a-1} \rightarrow i_a$, such that $q_1\cdots q_k = 1$ but $q_{a+1}\dots q_b \neq 1$ whenever $i_a = i_b$. Above, we make the convention that $z_a$ actually denotes the variable $z_{i_a\bullet_a}$ of $F$, for various $i_a \in I$ and $\bullet_a \geq 1$ (the latter of which will not be important to us), see \eqref{eqn:relabeling}. Recall the following formal series from \cite{N Reduced}:
	\begin{equation}
		\label{eqn:clunky}
		\begin{split}
			e_{\bigcirc} = \sum_{\ell = 1}^k P_{\ell}(x_1,\dots,x_k) e_{i_\ell}(x_{\ell}) \cdots e_{i_1}(x_1) e_{i_k}(x_k) \cdots e_{i_{\ell+1}}(x_{\ell+1}) \\
			f_{\bigcirc} = \sum_{\ell = 1}^k P_{\ell}(x_1,\dots,x_k) f_{i_{\ell+1}}(x_{\ell+1})  \cdots  f_{i_k}(x_k) f_{i_1}(x_1) \cdots f_{i_\ell}(x_{\ell})
		\end{split}
	\end{equation}
	where if we write $\widetilde{\zeta}_{ij}(x) = \zeta_{ij}(x) (1-x)^{\delta_{ij}}$, then we set
	\[
	P_{\ell}(x_1,\dots,x_k) = \frac {\frac {x_1 q_2 \cdots q_{\ell}}{x_{\ell}} \prod^{1 \leq a < b \leq \ell \text{ or } \ell < a < b \leq k}_{\text{or } 1 \leq b \leq \ell < a \leq k} \widetilde{\zeta}_{i_ai_b}\left(\frac {x_a}{x_b} \right) \left(-\frac {x_b}{x_a}\right)^{\delta_{b<a} \delta_{i_bi_a}}}{\left(1- \frac {x_k q_{1}}{x_{1}}\right)^{\delta_{\ell \neq k}} \prod^{1 \leq a < \ell \text{ or}}_{\ell < a < k}\left(1- \frac {x_a q_{a+1}}{x_{a+1}}\right)}
	\]
	The reason for the above choice of $P_{\ell}$'s is that they are Laurent polynomials and that we have the following formula for expansions of rational functions (see \cite[Proposition 3.5]{N Reduced})
	\begin{equation*}
		\begin{split}
			\sum_{\ell=1}^k \text{ev}_{|x_{\ell}| \gg \cdots \gg |x_1| \gg |x_k| \gg \cdots \gg |x_{\ell+1}|} \left[ \frac {P_{\ell}(x_1,\dots,x_k)}{\prod^{1 \leq a < b \leq \ell \text{ or } \ell < a < b \leq k}_{\text{or } 1 \leq b \leq \ell < a \leq k} \zeta_{i_ai_b}\left(\frac {x_a}{x_b} \right)} \right] \\ = \delta \left(\frac {z_k}{z_{k-1}q_k} \right) \cdots  \delta \left(\frac {z_2}{z_1q_2} \right) \prod_{1 \leq a < b \leq k} \left(1-\frac {x_a}{x_b}\right)^{\delta_{i_ai_b}}
		\end{split}
	\end{equation*}
	As shown in \loccitt, the formula above ensures that
	\begin{equation}
		\label{eqn:compare 1}
		\Big \langle e_{\bigcirc}, F \Big \rangle = 0 \quad \Leftrightarrow \quad \Big(F \text{ vanishes at the wheel \eqref{eqn:general wheel}} \Big)
	\end{equation}
	Thus, in all cases when the small shuffle algebra $\CS^- \subset \CV^-$ consists of shuffle elements which vanish at several wheels $\bigcirc_1,\dots,\bigcirc_N$, it was shown in \cite{N Arbitrary, NSS} that
	\[
	\text{Ker } \tUpsilon^+  = \Big(e_{\bigcirc_1},\dots,e_{\bigcirc_N}\Big) 
	\]
	This discussion provides a generators-and-relations presentation for quantum loop algebras associated to any strongly symmetrizable Kac-Moody Lie algebra
	\begin{equation}
		\label{eqn:km 1}
		U_q(L\fg) = \BC \Big \langle e_{i,d}, f_{i,d}, \ph^\pm_{i,d'} \Big \rangle^{i \in I}_{d \in \BZ, d' \geq 0} \Big/ \Big(\text{relations \eqref{eqn:rel quantum 1 strongly}-\eqref{eqn:rel quantum 8 strongly} and } e_{\bigcirc} = f_{\bigcirc} = 0\Big) 
	\end{equation}
	as $\bigcirc$ goes over all wheels \eqref{eqn:wheel strongly}. The reason why the above presentation looks different from the usual one is that there is nothing canonical about formulas \eqref{eqn:clunky}. One could replace them by any other formulas which satisfy property \eqref{eqn:compare 1}, and one would obtain the same quotient \eqref{eqn:km 1}. For instance, let us consider the well-known Drinfeld-Serre relations (let $\text{Sym}_{\mathfrak{S}_{1-c_{ij}}}$ denote symmetrization over the $x$ variables)
	\begin{align*}
		S_{ij}^+ = \sum_{k=0}^{1-c_{ij}} (-1)^k \binom{1-c_{ij}}{k}_{q_i} \text{Sym}_{\mathfrak{S}_{1-c_{ij}}} \Big[ e_i(x_1) \cdots e_i(x_k) e_j(y) e_i(x_{k+1}) \cdots e_i(x_{1-c_{ij}}) \Big]  \\
		S_{ij}^- = \sum_{k=0}^{1-c_{ij}} (-1)^k \binom{1-c_{ij}}{k}_{q_i} \text{Sym}_{\mathfrak{S}_{1-c_{ij}}} \Big[ f_i(x_1) \cdots f_i(x_k) f_j(y) f_i(x_{k+1}) \cdots f_i(x_{1-c_{ij}}) \Big]
	\end{align*}
	for all $i \neq j$, where $q_i = q^{\frac {d_{ii}}2}$. It was shown in \cite{NT} that 
	\[
	\Big \langle S_{ij}^+, F \Big \rangle = 0 \quad \Leftrightarrow \quad \Big(F \text{ satisfies \eqref{eqn:wheel strongly}} \Big)
	\]
	which implies that
	\begin{equation}
		\label{eqn:compare 3}
		\Big \langle S_{ij}^+, F \Big \rangle = 0 \quad \Leftrightarrow \quad \Big \langle e_{\bigcirc \text{ of \eqref{eqn:wheel strongly}}}, F \Big \rangle = 0
	\end{equation}
	As explained in general terms in \cite{N Arbitrary}, formula \eqref{eqn:compare 3} implies that the ideal $\text{Ker }\tUpsilon^+$ is generated by either the elements $e_{\bigcirc \text{ of \eqref{eqn:wheel strongly}}}$ or by the elements $S_{ij}^+$ for all $i \neq j$. We conclude that for any strongly symmetrizable Kac-Moody Lie algebra, the presentation \eqref{eqn:km 1} of the quantum loop group is equivalent to the usual one
	\begin{equation}
		\label{eqn:km 2}
		U_q(L\fg) = \BC \Big \langle e_{i,d}, f_{i,d}, \ph^\pm_{i,d'} \Big \rangle^{i \in I}_{d \in \BZ, d' \geq 0} \Big/ \Big(\text{relations \eqref{eqn:rel quantum 1 strongly}-\eqref{eqn:rel quantum 8 strongly} and } S^\pm_{ij} = 0, \forall i \neq j\Big) 
	\end{equation}
	that was constructed by Drinfeld in \cite{Dr}.
	
	\medskip
	
	\subsection{Square roots}
	\label{sub:square roots}
	
	The current Subsection can be skipped on a first reading, as it merely explains a rather technical detail about zeta functions and shuffle algebras. If one wanted to fix the monomial discrepancy between the two sides of equation \eqref{eqn:zeta strongly} in a more symmetric way, one could set 
	\begin{equation}
		\label{eqn:zeta strongly square roots}
		\zeta_{ij}(x) = (-1)^{\delta_{i>j}} \frac {x^{-\frac 12}-x^{\frac 12}q^{d_{ij}}}{(x^{-\frac 12}-x^{\frac 12})^{\delta_{ij}}}
	\end{equation}
	where $>$ denotes an arbitrary total order on $I$. In the present Subsection, we argue that the impact of the square roots above on the corresponding shuffle algebras is minimal, as shown by the following result.
	
	\medskip 
	
	\begin{proposition}
		\label{prop:square roots}
		
		Consider arbitrary collections of rational functions
		\[
		\zeta'_{ij}(x) \in \frac {\BK[x^{\pm 1}]}{(1-x)^{\delta_{ij}}} \qquad \text{and} \qquad \zeta''_{ij}(x) = x^{\frac {t_{ij}}2}\zeta'_{ij}(x)
		\]
		and let ${\CS'}^\mp$ and ${\CS''}^\mp$ be the small shuffle algebras \eqref{eqn:small} corresponding to these choices of zeta functions. As long as $t_{ij} \in \BZ$ and $t_{ij} \equiv t_{ji}$ mod $2$ for all $i,j \in I$, we have
		\begin{equation}
			\label{eqn:square roots}
			\CS''_{\mp \bn} = \CS'_{\mp \bn} \cdot \prod_{i \in I} z_i^{\frac {-t_{ii} + \sum_{j \in I} n_j t_{ij}}2}
		\end{equation}
		for all $\bn = (n_i)_{i \in I} \in \nn$.
		
	\end{proposition}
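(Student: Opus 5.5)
The plan is to show that multiplication by the monomial $M_{\bn} := \prod_{i \in I} z_i^{\frac{-t_{ii}+\sum_j n_j t_{ij}}{2}}$ intertwines the two shuffle products and sends generators to generators. Here $z_i$ is shorthand for $\prod_{a=1}^{n_i} z_{ia}$. Since $\CS'^\mp$ and $\CS''^\mp$ are the subalgebras generated by the degree-one elements $z_{i1}^d$, such an intertwiner immediately gives \eqref{eqn:square roots}.

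First, a bookkeeping point about half-integer exponents. Within $\CV_{\mp\bn}$ the exponent of $z_i$ is $\frac{-t_{ii}+\sum_j n_jt_{ij}}{2}$, which may be a half-integer. So we work in the enlarged big shuffle algebra of color-symmetric Laurent polynomials in the variables $z_{ia}^{1/2}$. The products \eqref{eqn:shuf prod} for $\zeta'$ and for $\zeta''$ both make sense there, and the statement is understood as an equality of subspaces of this enlarged space.

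Next, on degree-one pieces ($\bn = \bs^i$) we have $M_{\bs^i} = z_{i1}^{(-t_{ii}+t_{ii})/2} = 1$. So the generators $z_{i1}^d$ are the same for both algebras, and these generate $\CS'$ and $\CS''$ respectively.

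The main step is the twisting identity: for $R \in \CV_{-\bn}$ and $R' \in \CV_{-\bn'}$,
\[
(M_{\bn} R) *'' (M_{\bn'} R') = M_{\bn+\bn'} \cdot (R *' R').
\]
The right-hand sum over $\fS_{\bn+\bn'}$ has summands that are symmetric transforms of one another, so it suffices to check the corresponding summands, i.e. an identity of monomials inside the symmetrization. The extra factor $\prod \zeta''_{ji}/\zeta'_{ji}$ equals
\[
\prod_{i,j}\prod_{a \leq n_i,\, b > n_j} \left(\frac{z_{jb}}{z_{ia}}\right)^{t_{ji}/2} = \prod_j \left(z_j^{(2)}\right)^{\sum_i n_i t_{ji}/2} \prod_i \left(z_i^{(1)}\right)^{-\sum_j n'_j t_{ji}/2}.
\]
Here $z^{(1)}$ and $z^{(2)}$ denote the products over the first block ($a \leq n_i$) and second block ($b > n_i$) of variables. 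On the left we have $M_{\bn}(z^{(1)}) M_{\bn'}(z^{(2)})$; on the right we need $M_{\bn+\bn'}$ evaluated on all variables. Comparing exponents block by block:
\begin{itemize}
\item On the first block, the left side gives $\frac{-t_{ii}+\sum_j n_jt_{ij}}{2} - \frac{\sum_j n'_j t_{ji}}{2}$, while the right side needs $\frac{-t_{ii}+\sum_j (n_j+n'_j)t_{ij}}{2}$.
\item The two agree only up to $\sum_j n'_j \frac{t_{ij}+t_{ji}}{2}$, which is an integer by the parity hypothesis.
\end{itemize}
So the naive match fails unless $t$ is antisymmetric, and this is the main obstacle.

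The resolution is to use the freedom in the monomial and rewrite the discrepancy. The factor $\prod_{a \leq n_i,\, b > n_j}(z_{jb} z_{ia})^{s_{ij}}$, with $s_{ij}$ an integer built from $(t_{ij}+t_{ji})/2$, is not symmetric across blocks. Instead, I would decompose $t = t^{\text{anti}} + t^{\text{sym}}$, treat the antisymmetric part by the monomial twist above (which works exactly), and show separately that a symmetric integer shift $\zeta_{ij} \mapsto x^{s_{ij}}\zeta_{ij}$ with $s_{ij} = s_{ji}$ changes the product by a factor of the form $G(z^{(1)} \cup z^{(2)})/\big(G(z^{(1)})G(z^{(2)})\big)$ times a global constant. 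With $G = \prod_{i,j,a,b}(z_{jb}/z_{ia})^{s/2}$, this factor is trivial up to sign and symmetric-function factors, so it should be absorbed by a rescaling of $R$ by signs $(-1)^{\cdots}$ depending only on $\bn$. I expect verifying this precise normalization, including the diagonal term $-t_{ii}/2$ coming from the $a \neq b$ self-pairs, to be the fiddly part of the argument.

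Finally, applying the twisting identity inductively to products of generators yields \eqref{eqn:square roots}.
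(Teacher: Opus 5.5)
\paragraph{The main step is false.} Your central step is the elementwise identity $(M_{\bn}R)*''(M_{\bn'}R')=M_{\bn+\bn'}(R*'R')$. It fails as soon as $t+t^{T}\neq 0$, and your proposed repair does not fix it.

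After collecting monomials, the mismatch you found is $D=\prod_{i}(z^{(1)}_i)^{-\sum_j n'_j\frac{t_{ij}+t_{ji}}{2}}$. This is a non-constant monomial in the first-block variables, and it also depends on $\bn'$. So it cannot be absorbed by rescaling $R$ with constants or signs that depend only on $\bn$.

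A concrete counterexample: take one color, $\zeta'(x)=\frac{1-xq^2}{1-x}$ and $t_{11}=2$, so that $M_{\bs^1}=1$ and $M_{2\bs^1}=z_1z_2$. Then $1*''1=f_{-1}*'f_1=-1+q^2\frac{z_1^2+z_1z_2+z_2^2}{z_1z_2}$, which is not proportional to $z_1z_2\,(1*'1)=(1+q^2)z_1z_2$.

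Your $G$-coboundary claim also fails. With $G$ symmetric, the expression $G(z^{(1)}\cup z^{(2)})/(G(z^{(1)})G(z^{(2)}))$ is invariant under swapping the two blocks when $\bn=\bn'$. By contrast, $\prod_{a\le n_i,\,b>n_j}(z_{jb}/z_{ia})^{s_{ji}/2}$ with $s$ symmetric is inverted by that swap.

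This is exactly the case that matters. In \eqref{eqn:zeta strongly square roots} one has $t_{ij}=t_{ji}=-1$ for $i\neq j$ and $t_{ii}=0$, so the antisymmetric part, which you handle exactly, is zero.

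\paragraph{The missing idea.} The equality \eqref{eqn:square roots} is only an equality of subspaces, so no algebra intertwiner is needed.

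The paper works directly with the spanning elements $f_{i_1,d_1}*\cdots*f_{i_n,d_n}=\sum_{\sigma}z_{\sigma(1)}^{d_1}\cdots z_{\sigma(n)}^{d_n}\prod_{a<b}\zeta_{i_bi_a}(z_{\sigma(b)}/z_{\sigma(a)})$. The extra factor $\prod_{a<b}(z_{\sigma(b)}/z_{\sigma(a)})^{t_{i_bi_a}/2}$ has exponent at $z_{\sigma(a)}$ depending only on the position $a$, uniformly in $\sigma$. So it merges into the monomial, giving $\bar d_a=d_a-\sum_{b>a}\frac{t_{i_bi_a}}2+\sum_{b<a}\frac{t_{i_ai_b}}2$. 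By the parity hypothesis, $\bar d_a\in d_a+\BZ+\frac{-t_{i_ai_a}+\sum_j n_jt_{i_aj}}2$, and reindexing $d_a\in\BZ$ finishes the proof.

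You can also salvage your own framework. $D$ is a color-symmetric Laurent monomial with integer exponents, and $\CS'_{-\bn}$ is stable under multiplication by such monomials, since $D\cdot\text{Sym}[\cdots]=\text{Sym}[D\cdots]$. Hence $(M_{\bn}R)*''(M_{\bn'}R')=M_{\bn+\bn'}\big((DR)*'R'\big)$, and $R\mapsto DR$ is a bijection of $\CS'_{-\bn}$. It follows that $(M_{\bn}\CS'_{-\bn})*''(M_{\bn'}\CS'_{-\bn'})$ and $M_{\bn+\bn'}(\CS'_{-\bn}*'\CS'_{-\bn'})$ span the same space, and induction on $|\bn|$ gives the claim. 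What has to be proved is this subspace-level statement, not an elementwise identity.
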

	
	\medskip
	
	\begin{proof} Let us deal with the case $\mp = -$. By definition, $\CS_{-\bn}'$ is the linear span of
		\[
		\mathop{\sum_{\sigma \in \fS_n}}_{i_{\sigma(a)} = i_a, \forall a} z_{\sigma(1)}^{d_1}\cdots z_{\sigma(n)}^{d_n} \prod_{1 \leq a < b \leq n} \zeta'_{i_bi_a} \left(\frac {z_{\sigma(b)}}{z_{\sigma(a)}} \right)
		\]
		for various $i_1,\dots,i_n \in I$ and $d_1,\dots,d_n \in \BZ$ such that $\bs^{i_1}+\cdots+\bs^{i_n} = \bn$ (above, we identify each variable $z_a$ with $z_{i_a\bullet_a}$ where $\bullet_1,\dots, \bullet_n$ are the minimal positive integers such that $\bullet_a < \bullet_b$ if $a<b$ and $i_a = i_b$). Similarly, $\CS_{-\bn}''$ is the linear span of
		\begin{equation*}
			\begin{split}
				\mathop{\sum_{\sigma \in \fS_n}}_{i_{\sigma(a)} = i_a, \forall a} z_{\sigma(1)}^{d_1}\cdots z_{\sigma(n)}^{d_n} \prod_{1 \leq a < b \leq n} \zeta'_{i_bi_a} \left(\frac {z_{\sigma(b)}}{z_{\sigma(a)}} \right) \left( \frac {z_{\sigma(b)}}{z_{\sigma(a)}} \right)^{\frac {t_{i_bi_a}}2} \\
				= \mathop{\sum_{\sigma \in \fS_n}}_{i_{\sigma(a)} = i_a, \forall a}  z_{\sigma(1)}^{\bar{d}_1}\cdots z_{\sigma(n)}^{\bar{d}_n} \prod_{1 \leq a < b \leq n} \zeta'_{i_bi_a} \left(\frac {z_{\sigma(b)}}{z_{\sigma(a)}} \right)
			\end{split}
		\end{equation*}
		where
		\begin{equation*}
			\begin{split}
				\bar{d}_a = d_a - \sum_{b > a} \frac {t_{i_bi_a}}2 + \sum_{b < a} \frac {t_{i_ai_b}}2 &= d_a + \text{integer} + \sum_{b \neq a} \frac {t_{i_ai_b}}2\\
				&= d_a + \text{integer} + \frac {-t_{i_ai_a} + \sum_{j \in I} n_j t_{i_aj}}2
			\end{split}
		\end{equation*}
		As $d_a$ varies over $\BZ$, $\bar{d}_a$ varies over $\BZ + \frac {-t_{i_ai_a} + \sum_{j \in I} n_j t_{i_aj}}2$, which yields \eqref{eqn:square roots}. \end{proof}
	
	\bigskip 
	
	\section{Foldings and coverings}
	\label{sec:folding}
	
	\medskip 
	
	\noindent We introduce the main technical constructions of the present paper: the study of shuffle algebras in the setting of folding quivers with automorphisms and in the setting of coverings of quivers. We emphasize the example of twisted quantum affine algebras, and use our formalism to define twisted quantum toroidal algebras. 
	
	\medskip 
	
	\begin{itemize}[leftmargin=*]
		
		\item In Subsection~\ref{sub:quivers automorphism}, we introduce folded zeta functions for quivers with automorphisms, and in Subsection~\ref{sub:folding shuffle algebras} we study the corresponding shuffle algebras.
		
		\medskip 
		
		\item In Subsections~\ref{sub:auto}-\ref{sub:twisted quantum loop}, we apply the general theory to the Kac-Moody case.
		
		\medskip 
		
		\item In Subsection~\ref{sub:affine}, we prove the equivalence between our presentation and the classical one for twisted quantum affine algebras (finite type $\fg$), while in Subsections~\ref{sub:toroidal}-\ref{sub:toroidal cycle} we define twisted quantum toroidal algebras (affine type $\fg$).
		
		\medskip 
		
		\item In Subsections~\ref{sub:covering quivers}-\ref{sub:covering shuffle algebras}, we present analogous constructions to the ones above, in the setting of coverings of quivers. While there is a non-empty overlap between coverings of quivers and folding quivers with automorphisms (see Remark~\ref{rem:intersection}), the two settings are in general different.
		
	\end{itemize}
	
	\medskip 
	
	\noindent Throughout the present Section, bar notations such as $\oI, \ozeta, \oCS, \oCV, \oUU$ etc will refer to folding shuffle algebras associated to quivers with automorphisms in Subsections~\ref{sub:quivers automorphism}-\ref{sub:toroidal cycle} and \ref{sub:technical}, and to shuffle algebras associated to coverings of quivers in Subsections~\ref{sub:covering quivers}-\ref{sub:covering shuffle algebras}. The reason for this abuse of notation is to emphasize the parallels between the study of shuffle algebras in our two different settings: folding quivers with automorphisms on one hand, and coverings of quivers on the other.
	
	\medskip 
	
	\subsection{Folding quivers with automorphisms}
	\label{sub:quivers automorphism}
	
	Consider a quiver $Q$ endowed with an automorphism $\sigma : Q \rightarrow Q$ of order $m$. Fix a primitive $m$-th root of unity
	\[
	\omega = \sqrt[m]{1} \in \BK^*
	\]
	Assume we have parameters $\{q_{\alpha} \in \BK^*\}_{\alpha \text{ arrow of }Q}$ which are invariant under the action of the automorphism, i.e. $q_{\alpha} = q_{\sigma(\alpha)}$. Thus, given \emph{unfolded zeta functions}
	\begin{equation}
		\label{eqn:unfolded zeta}
		\zeta_{ij}(x) \sim (1-x)^{-\delta_{ij}} \prod_{\alpha : i \rightarrow j} (1-xq_\alpha)
	\end{equation}
	we define the \emph{folded zeta functions} by
	\begin{equation}
		\label{eqn:folded zeta}
		\ozeta_{ij}(x) = \prod_{k=1}^m \zeta_{\sigma^k(i)j}  \left( \omega^k x \right) 
	\end{equation}
	We write $\oI = I/\sigma$ and $\fold : I \rightarrow \oI$ for the standard quotient map. We choose a set of \emph{distinguished preimages} $I \supset J \xrightarrow{\sim} \oI$ for the map $\fold$. Consider the numbers
	\begin{equation}
		\label{eqn:m i}
		m_i = \frac m{\min \{a\geq 1 \ | \ \sigma^a(i) = i \}}
	\end{equation}
	for all $i \in I$. Since the numbers above are equal on the fibers of $\fold$, then it is natural to set $m_{\oi} = m_{i}$ for any $i \in \fold^{-1}(\oi)$. Formula \eqref{eqn:folded zeta} gives us
	\begin{equation*}
		\begin{split}
			\ozeta_{ij}(x) &\sim \prod_{k=1}^m \left[\left(1-x\omega^k \right)^{-\delta_{\sigma^k(i)j}}  \prod_{\alpha : \sigma^k(i) \rightarrow j} \left( 1-x\omega^k q_\alpha \right) \right]  \\  &\sim \prod_{k=1}^m \left[\left(1-x\omega^{-k} \right)^{-\delta_{i\sigma^{k}(j)}}  \prod_{\alpha : i \rightarrow \sigma^{k}(j)} \left( 1-x\omega^{-k} q_\alpha \right) \right] 
		\end{split}
	\end{equation*}
	For suitable choices of the Laurent monomials which determine the proportionality denoted by $\sim$ in the formula above, $\ozeta_{ij}(x)$ can be made invariant under $x \mapsto x \omega^{\frac m{m_i}}$ and $x \mapsto x \omega^{\frac m{m_j}}$, which implies that $\ozeta_{ij}(x)$ is actually a function of $x^{\text{lcm}(m_i,m_j)}$. This makes it natural to write the formulas above in terms of $\oi, \oj \in \oI$, namely
	\begin{equation}
		\label{eqn:folded zeta 3}
		\begin{split}
			\ozeta_{\oi \oj}(x) &\sim  \left(1-x^{m_{\oi}} \right)^{-\delta_{\oi \oj}} \prod_{k=1}^{\frac m{m_{\oi}}} \prod_{\alpha : \sigma^k(i_0) \rightarrow j_0} \left( 1-x^{m_{\oi}} \omega^{km_{\oi}} q_\alpha^{m_{\oi}} \right)  \\ &\sim \left(1-x^{m_{\oj}} \right)^{-\delta_{\oi \oj}} \prod_{k=1}^{\frac m{m_{\oj}}} \prod_{\alpha : i_0 \rightarrow \sigma^{k}(j_0)} \left( 1-x^{m_{\oj}} \omega^{-km_{\oj}} q_\alpha^{m_{\oj}} \right) 
		\end{split}
	\end{equation}
	where $i_0,j_0 \in J$ denote the distinguished preimages of $\oi,\oj \in \oI$. 
	
	\medskip 
	
	\begin{example}
		\label{ex:a2}
		
		Consider the following quiver with parameters for any $q \in \BC^* \backslash \sqrt[\BN]{1}$ and consider the order $2$ automorphism that swaps the vertices $1$ and $2$.
		\[
		\begin{tikzpicture}[
			>=Stealth,
			vertex/.style={circle, fill=black, inner sep=1.5pt},
			lab/.style={below=3pt, font=\small}
			]
			
			\node[vertex, label={[lab]$1$}] (1) at (0,0) {};
			\node[vertex, label={[lab]$2$}] (2) at (1.8,0) {};
			
			\draw[->, bend left=18] (1) to node[above] {$q^{-1}$} (2);
			\draw[->, bend left=18] (2) to node[below] {$q^{-1}$} (1);
			\draw[->] (1) to[out=45, in=135, looseness=20] node[above] {$q^2$} (1);
			\draw[->] (2) to[out=45, in=135, looseness=20] node[above] {$q^2$} (2);
		\end{tikzpicture}
		\]
		
		\noindent The unfolded zeta functions as defined in the present subsection are given by
		\[
		\zeta_{ij}(x) \sim \begin{cases} \frac {1-xq^2}{1-x} &\text{if }i=j \\
			1-xq^{-1} &\text{if }i\neq j \end{cases}
		\]
		Since $\sigma$ has order $2$, we have $\omega = -1$. The folded zeta function (with the distinguished preimage of $\bar{1}$ being $1$) is given by
		\begin{equation}
			\label{eqn:example A2}
			\ozeta_{\bar{1} \bar{1}}(x) \sim \frac{(1-xq^2)(1+xq^{-1})}{1-x}
		\end{equation}
		
	\end{example}
	
	\medskip
	
	\begin{example} 
		\label{ex:a3}
		
		Consider the following quiver with parameters for any $q \in \BC^* \backslash \sqrt[\BN]{1}$ and consider the order $2$ automorphism that permutes the vertices $1$ and $3$, all the while fixing vertex $2$.
		
		\[
		\begin{tikzpicture}[
			>=Stealth,
			vertex/.style={circle, fill=black, inner sep=1.5pt},
			lab/.style={below=3pt, font=\small}
			]
			
			\node[vertex, label={[lab]$1$}] (1) at (0,0) {};
			\node[vertex, label={[lab]$2$}] (2) at (1.8,0) {};
			\node[vertex, label={[lab]$3$}] (3) at (3.6,0) {};
			
			\draw[->, bend left=18] (1) to node[above] {$q^{-1}$} (2);
			\draw[->, bend left=18] (2) to node[below] {$q^{-1}$} (1);
			\draw[->, bend left=18] (2) to node[above] {$q^{-1}$} (3);
			\draw[->, bend left=18] (3) to node[below] {$q^{-1}$} (2);
			\draw[->] (1) to[out=45, in=135, looseness=20] node[above] {$q^2$} (1);
			\draw[->] (2) to[out=45, in=135, looseness=20] node[above] {$q^2$} (2);
			\draw[->] (3) to[out=45, in=135, looseness=20] node[above] {$q^2$} (3);
		\end{tikzpicture}
		\]
		
		\noindent The unfolded zeta functions are given by
		\[
		\zeta_{ij}(x) \sim \begin{cases} \frac {1-xq^2}{1-x} &\text{if }i=j \\
			1-xq^{-1} &\text{if } (i,j) \in \{(1,2),(2,1),(3,2),(2,3)\} \\ 1 &\text{if } (i,j) \in \{(1,3),(3,1)\} \end{cases}
		\]
		We have $\omega = -1$. The folded zeta functions (with the distinguished preimages of $\bar{1}$ and $\bar{2}$ being $1$ and $2$, respectively) are given by
		\[
		\ozeta_{\oi \oj}(x) \sim \begin{cases} \frac {1-xq^2}{1-x} &\text{if }\oi=\oj = \bar{1} \\ \frac {1-x^2q^4}{1-x^2} &\text{if }\oi = \oj = \bar{2} \\ 1 - x^{2}q^{-2} &\text{if } (\oi,\oj) \in \{(\bar{1}, \bar{2}), (\bar{2},\bar{1})\} \end{cases}
		\]
		
	\end{example} 
	
	\medskip 
	
	\subsection{Folded shuffle algebras}
	\label{sub:folding shuffle algebras}
	
	In the setting of the previous Subsection, define
	\begin{equation}
		\label{eqn:v folded}
		\oCV^+ = \oCV^- := \bigoplus_{\obn \in \onn} \BK \left[z_{\oi 1}^{m_{\oi}}, z_{\oi 1}^{- m_{\oi}},\dots,z_{\oi \on_{\oi}}^{m_{\oi}}, z_{\oi \on_{\oi}}^{- m_{\oi}} \right]_{\oi \in \oI}^{\fS_{\obn}}
	\end{equation}
	The following result is a simple exercise that we leave to the reader, using the fact that the folded zeta functions \eqref{eqn:folded zeta 3} are actually functions of $x^{\text{lcm}(m_{\oi},m_{\oj})}$ (this claim requires a suitable condition on the Laurent monomial which measures the discrepancy between the LHS and RHS of \eqref{eqn:folded zeta 3}, which we henceforth assume).
	
	\medskip 
	
	\begin{proposition}
		
		The sets of Laurent polynomials \eqref{eqn:v folded} are closed under shuffle product with respect to the folded zeta functions \eqref{eqn:folded zeta 3}.
		
	\end{proposition}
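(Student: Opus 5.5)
The plan is to verify directly, using the shuffle product \eqref{eqn:shuf prod} with $\zeta$ replaced by $\ozeta$, that the product of two elements of \eqref{eqn:v folded} is again a color-symmetric Laurent polynomial in the variables $z_{\oi a}^{m_{\oi}}$. Take $R \in \oCV_{\obn}$ and $R' \in \oCV_{\obn'}$. We work in a larger space in three steps.

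\textbf{Step 1: symmetric rational functions.} The symmetrized expression
\[
\eSym_{\fS_{\obn+\obn'}}\Big[R(\dots)R'(\dots)\prod \ozeta_{\oj\oi}(z_{\oj b}/z_{\oi a})\Big]
\]
is a color-symmetric rational function. Its only possible poles come from the factors $(1-(z_{\oi b}/z_{\oi a})^{m_{\oi}})^{-1}$ in $\ozeta_{\oi\oi}$.

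\textbf{Step 2: invariance under roots of unity.} Each summand is invariant under $z_{\oi a}\mapsto \omega^{m/m_{\oi}} z_{\oi a}$, separately for every variable. The factors $R,R'$ are invariant by hypothesis. Each zeta factor $\ozeta_{\oj\oi}(z_{\oj b}/z_{\oi a})$ is invariant because, under the standing assumption on the Laurent monomials, $\ozeta_{\oj\oi}(x)$ is a function of $x^{\mathrm{lcm}(m_{\oi},m_{\oj})}$. Rescaling $z_{\oi a}$ by an $m_{\oi}$-th root of unity (that is, by a power of $\omega^{m/m_{\oi}}$) multiplies $x$ by such a root of unity, and raising to the power $\mathrm{lcm}$ kills it. The same holds for $z_{\oj b}$. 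Since the symmetrization permutes variables within each color, and the rescaling group is a product over variables with the same root of unity for all variables of a given color, the group commutes with symmetrization. So the whole sum is invariant. A Laurent polynomial invariant under $z\mapsto \epsilon z$ for a primitive $m_{\oi}$-th root of unity $\epsilon$ in a given variable $z$ is a Laurent polynomial in $z^{m_{\oi}}$. Hence, once polynomiality is known, the result lies in \eqref{eqn:v folded}.

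\textbf{Step 3: absence of poles (the main point).} We must show that the denominators $z_{\oi a}^{m_{\oi}}-z_{\oi b}^{m_{\oi}}=\prod_{\epsilon}(z_{\oi a}-\epsilon z_{\oi b})$ cancel after symmetrization.

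For $\epsilon=1$ this is the usual argument from the footnote to Definition~\ref{def:big shuf}. The sum is antisymmetric-times-symmetric under the transposition $a\leftrightarrow b$, so the simple pole at $z_{\oi a}=z_{\oi b}$ must appear to even order in a symmetric function; as it is at most simple, it vanishes.

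For $\epsilon\ne1$ I would combine this with Step 2. Consider the substitution $z_{\oi b}\mapsto\epsilon z_{\oi b}$, which preserves the sum by Step 2, followed by the transposition $a\leftrightarrow b$. This carries the hyperplane $z_{\oi a}=\epsilon z_{\oi b}$ to $z_{\oi b}=\epsilon\, \epsilon^{-1}\cdots$; more cleanly, write the sum as $G/\prod(z_{\oi a}^{m_{\oi}}-z_{\oi b}^{m_{\oi}})$. The numerator $G$ is then a Laurent polynomial that is antisymmetric in the pair $(a,b)$, because the denominator is antisymmetric and the sum is symmetric. By the invariance of Step 2, $G$ is also a polynomial in $z_{\oi a}^{m_{\oi}},z_{\oi b}^{m_{\oi}}$ (with all other variables fixed). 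An antisymmetric polynomial in $u=z_{\oi a}^{m_{\oi}}$ and $v=z_{\oi b}^{m_{\oi}}$ vanishes at $u=v$, so it is divisible by $u-v$. This removes all the poles.

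The delicate part is checking that the invariance of Step 2 genuinely applies to the numerator $G$. The denominator is itself invariant, so this holds. I expect the only real subtlety to be the bookkeeping of the Laurent monomial normalization that makes each $\ozeta_{\oi\oj}$ a function of $x^{\mathrm{lcm}}$; that normalization is assumed in the statement.
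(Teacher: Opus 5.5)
Your proof is correct and is essentially the exercise the paper leaves to the reader. Because each $\ozeta_{\oi\oj}(x)$ is a function of $x^{\mathrm{lcm}(m_{\oi},m_{\oj})}$, every summand is a rational function of the variables $u_{\oi a}=z_{\oi a}^{m_{\oi}}$ with at most simple poles along $u_{\oi a}=u_{\oi b}$, and the usual symmetrization argument from the footnote to Definition~\ref{def:big shuf}, run in the $u$-variables, removes them. Your final Vandermonde-in-$u$ divisibility argument already handles all the factors $z_{\oi a}-\epsilon z_{\oi b}$ at once, so the separate $\epsilon=1$ case and the abandoned substitution sentence in Step 3 can simply be deleted.
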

	
	\medskip
	
	\noindent We will find it convenient to write for all $\oi \in \oI$ and $a \geq 1$
	\begin{equation}
		\label{eqn:formula 1}
		z_{i_0 a} = z_{\oi a}
	\end{equation}
	where $i_0 \in I$ is the distinguished preimage of $\oi \in \oI$, and to extend this notation by
	\begin{equation}
		\label{eqn:formula 2}
		z_{\sigma^k(i_0)a} = z_{i_0 a}\omega^k
	\end{equation}
	for all $k \in \BZ/m\BZ$. The advantage of this notation is that Laurent polynomials in $z_{ia}$ can be understood as elements of \eqref{eqn:v folded}: simply replace all the $z_{ia}$'s with $z_{\oi a}$'s times powers of $\omega$ using formulas \eqref{eqn:formula 1} and \eqref{eqn:formula 2}, and arbitrarily change the $a$ indices to prevent overlaps. Dually, this means that the (half) quantum loop algebras 
	\[
	\oUUp \text{ and } \oUUm
	\]
	(defined as in Subsection~\ref{sub:upsilon}, but using the zeta functions \eqref{eqn:folded zeta 3}) have generators
	\[
	e_{\oi,d} =: e_{i_0,d} \quad \text{and} \quad f_{\oi,d} =: f_{i_0,d}
	\]
	for all $\oi \in \oI$ and $d \in m_{\oi}\BZ$. We will find it convenient to extend the notation of these generators to all $i \in I$ and $d \in \BZ$ using the conditions
	\begin{equation}
		\label{eqn:formula 4}
		e_{\sigma^k(i_0),d} = e_{i_0,d} \omega^{dk} \quad \text{and} \quad f_{\sigma^k(i_0),d} = f_{i_0,d} \omega^{dk}
	\end{equation}
	Note that formula \eqref{eqn:formula 4} implies that $e_{i,d} = f_{i,d} = 0$ unless $d$ is a multiple of $m_i$.
	
	\medskip
	
	\subsection{Folding Kac-Moody Lie algebras with automorphisms}
	\label{sub:auto}
	
	A particularly important case of the construction in Subsection~\ref{sub:quivers automorphism} applies to a symmetrizable Kac-Moody Lie algebra $\fg$, endowed with an automorphism
	\[
	\sigma : I \rightarrow I 
	\]
	such that 
	\[
	d_{i j} = d_{\sigma(i)\sigma(j)}
	\]
	for all $i, j \in I$. Let $m$ denote the order of $\sigma$, and fix a primitive $m$-th root of unity $\omega$ (whenever we discuss the Kac-Moody case, the ground field will be $\BC$ and we will fix a parameter $q \in \BC^* \backslash \sqrt[\BN]{1}$). In this case, the unfolded zeta functions are given by 
	\[
	\zeta_{ij}(x) \sim \frac {1-xq^{d_{ij}}}{(1-x)^{\delta_{ij}}}
	\]
	as in \eqref{eqn:zeta strongly}, and the folded zeta functions are given by
	\begin{equation}
		\label{eqn:zeta strongly twisted}
		\ozeta_{ij}(x) = \prod_{k=1}^m \zeta_{\sigma^k(i)j}(\omega^kx) \sim \prod_{k=1}^m \frac {1 - x\omega^k q^{d_{\sigma^k(i)j}}}{(1-x\omega^k)^{\delta_{\sigma^k(i)j}}}
	\end{equation}
	By analogy with \eqref{eqn:zeta strongly exchange}, the symbol $\sim$ means that the two sides of \eqref{eqn:zeta strongly twisted} differ by a Laurent monomial that must be chosen such that
	\begin{equation}
		\label{eqn:zeta strongly twisted exchange}
		\frac {\ozeta_{ij}(x)}{\ozeta_{ji}(x^{-1})} = \prod_{k=1}^m \frac {(x\omega^k q^{d_{\sigma^k(i)j}}-1)}{(x\omega^k-q^{d_{\sigma^k(i)j}})}
	\end{equation}
	In particular, this choice ensures that the folded zeta functions are balanced, in the sense of \eqref{eqn:limit}. As in \eqref{eqn:zeta strongly square roots}, one rather symmetric choice of said monomial is 
	\[
	\ozeta_{ij}(x) =  \prod_{k=1}^m (-1)^{\sigma^k(i)>j}\frac {x^{-\frac 12} - x^{\frac 12}\omega^k q^{d_{\sigma^k(i)j}}}{(x^{-\frac 12}\omega^{-\frac k2} -x^{\frac 12}\omega^{\frac k2})^{\delta_{\sigma^k(i)j}}},
	\]
	where $>$ denotes an arbitrary total order on $I$, which underlies an arbitrary total order on $\oI$ (i.e. if $\oi > \oj$ in $\oI$, then $i > j$ in $I$). We refer to Proposition~\ref{prop:square roots} for the impact of the square roots above on the shuffle algebra. With all this in mind, we fix the normalization of the folded zeta functions \eqref{eqn:folded zeta} as 
	\[
	\ozeta_{\oi\oj}(x) = \ozeta_{i_0j_0}(x)
	\]
	where $i_0,j_0 \in I$ denote the distinguished preimages of $\oi,\oj \in \oI$ (for example, in the case of the finite type Dynkin diagrams in Subsection~\ref{sub:affine}, the distinguished preimage of any $\oi \in \oI = I/\sigma$ is chosen to be the preimage with the smallest label).
	
	\medskip 
	
	\subsection{Twisted wheel conditions}
	\label{sub:twisted wheel strongly}
	
	The following result provides a full description of the small shuffle algebra associated to folding a strongly symmetrizable Kac-Moody Lie algebra $\fg$ with respect to an automorphism $\sigma$, see Subsection~\ref{sub:auto}.
	
	\medskip 
	
	\begin{proposition}
		\label{prop:twisted wheel strongly}
		
		For any strongly symmetrizable $\fg$, the small shuffle algebra $\oCS^\pm$ is the subset of symmetric Laurent polynomials $F \in \oCV^\pm$ which satisfy the wheel conditions
		\begin{equation}
			\label{eqn:twisted wheel strongly}
			F\Big|_{(z_{i1},z_{i2}, \dots, z_{i,-c_{ij}}, z_{i,1-c_{ij}}) \mapsto (z_{j1} q^{d_{ij}}, z_{j1} q^{d_{ij}+d_{ii}},  \dots, z_{j1} q^{-d_{ij}-d_{ii}}, z_{j1} q^{-d_{ij}})} =  0
		\end{equation}
		$\forall i \neq j$ in $I$. 
		The variables $z_{ia}$ of $F$ are indexed by $i \in I$ in \eqref{eqn:twisted wheel strongly}, see \eqref{eqn:formula 1}-\eqref{eqn:formula 2}. \footnote{Note that the identification $z_{\sigma(i)a} = z_{ia} \omega$ implicit in formulas \eqref{eqn:formula 1}-\eqref{eqn:formula 2} means that the wheel conditions \eqref{eqn:twisted wheel strongly} impose quite a lot of constraints on elements of $\oCV^\pm$. For example, if $d_{i'j'} = 0$ for all elements $i',j'$ in the orbits of given $i,j \in I$, then conditions \eqref{eqn:twisted wheel strongly} require $F$ to be divisible by $z_{i1}^g-z_{j1}^g$, where $g = \text{lcm}(m_i,m_j)$.}
	\end{proposition}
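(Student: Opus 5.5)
We will prove the two inclusions separately, reducing everything to the unfolded statement of Subsection~\ref{sub:wheel conditions} (from \cite{N New}) via the substitution \eqref{eqn:formula 1}--\eqref{eqn:formula 2}. The key observation is that the folded shuffle product is the unfolded one in disguise. Under $z_{\sigma^k(i_0)a}=z_{i_0a}\omega^k$, a variable $z_{\oi a}$ can be thought of as carrying the color $i_0$. Then $\ozeta_{i_0j_0}(z_{\oi a}/z_{\oj b})=\prod_k \zeta_{\sigma^k(i_0)j_0}(\omega^k z_{\oi a}/z_{\oj b})$ is exactly the product of unfolded zeta factors between the $\sigma$-orbit of the $\oi$-variable and the $\oj$-variable.

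\emph{Step 1 (inclusion $\oCS^\pm\subseteq$ wheel locus).} The algebra $\oCS^-$ is generated by the $z_{\oi1}^d$ with $d\in m_{\oi}\BZ$. Since the wheel conditions \eqref{eqn:twisted wheel strongly} are linear, it suffices to check that shuffle products $z^{d_1}_{\oi_1 1}*\cdots*z^{d_n}_{\oi_n1}$ satisfy them. Such a product is a symmetrization of a monomial times $\prod_{a<b}\ozeta_{\oi_b\oi_a}(z_b/z_a)$. We specialize it at the wheel, where $z_{i\cdot}$ and $z_{j1}$ are expressed through the folded variables times powers of $\omega$. In each summand we then find a product of $\zeta$-factors containing, along the chain $z_{i1},\dots,z_{i,1-c_{ij}},z_{j1}$, enough factors $1-q^{d_{ij}}\cdot(\text{ratio})$ and $1-q^{d_{ii}}\cdot(\text{ratio})$ to vanish. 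This is the same mechanism as in the unfolded Feigin--Odesskii argument, because every unfolded factor $\zeta_{i'j'}$ with $i',j'$ in the orbits appears inside some $\ozeta$. The denominators $(1-x^{m})$ only cancel poles that are removed by symmetrization, so they do not interfere. A cleaner route is the following. Write the folded product as an unfolded shuffle product in the $\sigma$-equivariant unfolded algebra, namely the $m$-fold "orbit-expanded" element, and restrict. Then vanishing follows from \cite{N New} applied to $\fg$ itself, or rather to its $\sigma$-saturated subquiver.

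\emph{Step 2 (reverse inclusion).} Here we use the pairing. By Theorem~\ref{thm:pairings}, $\oCS^-$ is the annihilator, inside $\oCV^-$, of $\text{Ker}\,\tUpsilon^+$ for the folded data. We need the dimension count: for each degree $\obn$ and each fixed total homogeneous degree, the space of $F\in\oCV_{-\obn}$ satisfying the wheel conditions has dimension at most that of $\oCS_{-\obn}$ in that degree. Following \cite{N New, N Arbitrary}, we do this by a filtration/specialization argument. Fix a degree ordering of $\obn$ by "slope" or by lexicographic order of colors. Define iterated specialization maps $F\mapsto F|_{z\text{'s collapsed along }q^{d_{ii}}\text{-strings}}$, and show that wheel-satisfying $F$ in a given associated graded piece are divisible by a prescribed product of linear factors. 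This bounds them by a space of symmetric Laurent polynomials in fewer variables. On the other side, we exhibit explicit elements of $\oCS^-$ (shuffle products of "string" elements) whose leading terms span that space. Alternatively, and more economically, we use the clunky relations: by \eqref{eqn:compare 1} applied to the folded wheels, $\langle e_{\bigcirc},F\rangle=0$ is equivalent to the wheel conditions. It then suffices to show $\text{Ker}\,\tUpsilon^+=(e_\bigcirc)$ for the folded data, which \cite{N Arbitrary, NSS} establish whenever the shuffle algebra is cut out by wheels. This makes the reasoning circular unless the dimension bound is proven directly.

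\emph{Main obstacle.} The hard step is the dimension upper bound in Step 2. The twisted identifications $z_{\sigma(i)a}=\omega z_{ia}$ create wheels mixing several orbit elements, so the unfolded filtration argument must be made $\sigma$-equivariant. My first attempt would be to embed $\oCV^-_{\obn}$ into the unfolded $\CV^-$ of the quiver via $F\mapsto F$ evaluated on the orbit-expanded variables, that is, $\sigma$-invariant functions on $m$ copies. I would then check that twisted wheels correspond exactly to unfolded wheels on $\sigma$-invariant elements, and finally transport the spanning set of \cite{N New} by averaging over $\sigma$. This is valid in characteristic 0.
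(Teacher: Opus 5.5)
Your Step~1 works in its first form. In each summand of $\text{Sym}\big[\prod_a z_a^{d_a}\prod_{a<b}\ozeta_{\oi_b\oi_a}(z_b/z_a)\big]$, pick the factor of $\ozeta_{\oi_b\oi_a}(x)=\prod_k\zeta_{\sigma^k(i_{b,0})i_{a,0}}(\omega^k x)$ that matches the colors assigned to the wheel variables. This gives $\zeta_{c_bc_a}$ of the unfolded variables for every ordered pair, so the usual cyclic argument kills every summand, and the denominators $1-x^{m_{\oi}}$ do not vanish on the wheel. The paper instead shows, as in \cite{N Shuffle}, that the wheel set is closed under the shuffle product. Drop the ``cleaner route'', however. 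The unfolded symmetrization permutes the colors of one orbit independently and raises the cross factors to powers, so the folded product is not a restriction of an unfolded one.

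The genuine gap is Step~2, which carries all the content of the proposition and is not proved. Theorem~\ref{thm:pairings} does not say that the image is the annihilator of the kernel inside the big shuffle algebra. It only says that the kernel pairs trivially with the image and that the descended pairing is non-degenerate, and it is not stated for the folded data with exponents in $m_{\oi}\BZ$. Since $\oCV_{-\obn}$ is infinite-dimensional even in fixed horizontal degree, neither that annihilator statement nor a ``dimension count'' comes for free. Of your three routes:
\begin{itemize}
\item the clunky-relation route is circular, as you note;
\item the filtration route is only named, never carried out;
\item the averaging route fails. Averaging an element $\text{Sym}\big[\cdots\prod\zeta_{i_bi_a}\big]$ of $\CS_{-\bn}$ over $G_{\bm,\bn}\rtimes\fS_{\obn}$ yields a symmetrization with a single factor $\zeta_{i_bi_a}$ per pair rather than the full $\ozeta_{\oi_b\oi_a}$. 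It generally violates \eqref{eqn:twisted wheel strongly}, because the averaged terms are evaluated at $\omega$-twisted points where the unfolded wheel conditions say nothing.
\end{itemize}

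The missing idea is the paper's Claim~\ref{claim:key}: every $F\in\oCV^-$ satisfying \eqref{eqn:twisted wheel strongly} pairs to zero with $\text{Ker}\,\bar{\tUpsilon}^+$. One proves it by deforming the contours in \eqref{eqn:pair formula} from $|z_1|\gg\cdots\gg|z_n|$ toward a common circle.
\begin{itemize}
\item The residues along same-color progressions $z_{a_s}=z_{a_{s-1}}q^{-d_{ii}}=\cdots$ are of the same nature as in the untwisted argument. They lead to an expression in which $x$ enters only through $\bar{\tUpsilon}^+(x)$.
\item The new poles of $\ozeta_{i_ni}(z_n/z_a)$ at $z_n=z_a\omega^{-k}q^{-d_{\sigma^k(i_n)i}}$ become $z_n=z_aq^{-d_{ji}}$ once $z_n$ is recolored as $j=\sigma^k(i_n)$ via \eqref{eqn:formula 2}. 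The ones that would actually be crossed are cancelled by the twisted wheel conditions.
\end{itemize}
This gives $\text{Im}\,\bar{\tUpsilon}^-\subseteq\{\text{wheel set}\}\subseteq(\text{Ker}\,\bar{\tUpsilon}^+)^{\perp}$. Equality is then extracted from non-degeneracy via the combinatorics-of-words argument of \cite{N Wheel, N Arbitrary}. That argument is what replaces the dimension bound you were looking for.
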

	
	\medskip 
	
	\noindent The Proposition implies that there exists an isomorphism analogous to \eqref{eqn:upsilon}
	\begin{equation}
		\label{eqn:upsilon twisted strongly}
		\oUpsilon^\pm : \oUUpm \xrightarrow{\sim} \oCS^\pm
	\end{equation}
	
	\medskip
	
	\begin{proof} We will sketch the usual argument, as laid out in \cite{N Shuffle, N Wheel, N Arbitrary, N New, NSS}, and provide references for the details. First of all, the fact that $\oCS^\pm$ is closed under the shuffle product is proved as in \cite[Proposition 2.1]{N Shuffle}; the key feature it uses is that
		\[
		\ozeta_{ji}(q^{-d_{ij}}) = \ozeta_{ii}(q^{-d_{ii}}) = \ozeta_{ij}(q^{-d_{ij}}) = 0
		\]
		As such, $\oCS^\pm$ contains the image of the homomorphism
		\begin{equation}
			\label{eqn:bar tupsilon}
			\bar{\tUpsilon}^\pm : \bar{\tUU}^{\pm} \longrightarrow \oCV^\pm, \qquad e_{i,d}, f_{i,d} \mapsto z_{i 1}^d, \quad \forall d \in m_i\BZ
		\end{equation}
		defined as in Subsection~\ref{sub:upsilon}, but with respect to the zeta functions \eqref{eqn:zeta strongly twisted}. This allows us to define the pairings which are non-degenerate in the $\oCV^\pm$ factor
		\begin{align}
			&\bar{\tUU}^+ \otimes \oCV^- \xrightarrow{\langle \cdot, \cdot \rangle} \BC \label{eqn:pair twisted} \\
			&\oCV^+ \otimes \bar{\tUU}^- \xrightarrow{\langle \cdot, \cdot \rangle} \BC \label{eqn:pair opposite twisted}
		\end{align}
		as in Definition~\ref{def:pair}, but with $d_a \in m_{i_a} \BZ$ instead of $d_a \in \BZ$ in formulas \eqref{eqn:pair formula}-\eqref{eqn:pair formula opposite}. 
		
		\medskip 
		
		\begin{claim}
			\label{claim:key}
			
			The pairings \eqref{eqn:pair twisted} and \eqref{eqn:pair opposite twisted} have the property that
			\begin{equation}
				\label{eqn:key}
				\Big \langle \emph{Ker }\bar{\tUpsilon}^+, \oCS^- \Big \rangle = \Big \langle \oCS^+, \emph{Ker }\bar{\tUpsilon}^- \Big \rangle = 0
			\end{equation}
			
		\end{claim}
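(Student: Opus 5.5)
We prove the first equality in \eqref{eqn:key}; the second follows by the symmetric argument with the opposite pairing. The plan has two steps. First, reduce the claim to a statement about the image of $\bar{\tUpsilon}^+$. Then show that $\oCS^-$ lies in a space which, under the pairing, is already detected by that image.

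\textbf{Step 1: a shuffle-product formula for the pairing.} I would establish, as in \cite[Theorem 2.5]{N Arbitrary}, the formula
\[
\Big\langle e_{i_1,d_1}\cdots e_{i_n,d_n}, F \Big\rangle = \Big\langle \bar{\tUpsilon}^+(e_{i_1,d_1}\cdots e_{i_n,d_n}), F\Big\rangle_{\text{sh}},
\]
valid for all $F \in \oCS^-$. Here $\langle \cdot,\cdot\rangle_{\text{sh}}$ is a pairing $\oCV^+ \otimes \oCV^- \to \BC$ given directly by a symmetric contour integral of the form
\[
\frac{1}{\obn!}\int \frac{E(z)F(z)}{\prod_{a\neq b} \ozeta(z_a/z_b)} \prod Dz_a
\]
over unit-circle type contours, suitably normalized. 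Once this holds, any $x \in \text{Ker }\bar{\tUpsilon}^+$ satisfies $\langle x, F\rangle = \langle 0, F\rangle_{\text{sh}} = 0$, which is the claim.

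\textbf{Step 2: moving the contours.} The identity in Step 1 comes from starting with the integral \eqref{eqn:pair formula}, taken over the nested contours $|z_1| \gg \cdots \gg |z_n|$, and deforming them to a common contour. Symmetrizing over the colored permutations then produces $\bar{\tUpsilon}^+(e_{i_1,d_1}\cdots e_{i_n,d_n}) = \eSym\big[z_1^{d_1}\cdots z_n^{d_n}\prod_{a<b}\ozeta_{i_ai_b}(z_a/z_b)\big]$ in the numerator. The deformation is legitimate only if the residues picked up at the poles of $1/\prod \ozeta_{i_bi_a}(z_b/z_a)$ all vanish.

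In the folded setting these poles sit at $z_b = z_a \omega^{-k} q^{-d_{\sigma^k(i_a) i_b}}$. Under the identification \eqref{eqn:formula 2}, each such pole is a pole of an unfolded factor $\zeta_{i'j'}$ between variables $z_{i'\bullet}$ and $z_{j'\bullet}$, with $i'$ in the orbit of $i_a$ and $j'$ in the orbit of $i_b$. So I would write every residue as an iterated residue at a chain $z_{j'} = z_{i'} q^{-d_{i'j'}}$ of unfolded type. The combinatorics then match the untwisted argument of \cite{N Wheel, N Arbitrary}: an iterated residue can survive only if the chain of specializations closes up into a wheel \eqref{eqn:twisted wheel strongly}. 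The factor $F$ vanishes there by Proposition~\ref{prop:twisted wheel strongly} (or directly by the definition of $\oCS^-$ via wheel conditions), so all residues vanish.

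\textbf{Main obstacle.} The difficulty is showing that the only non-canceling iterated residues are the wheel configurations. Two things need care.
\begin{itemize}[leftmargin=*]
\item Poles between variables of the same color, $z_{ia}$ and $z_{ib}$, cancel against the zeros of $\ozeta_{ii}$ at $q^{-d_{ii}}$. This is where the identity $\ozeta_{ii}(q^{-d_{ii}})=0$ enters.
\item The roots of unity $\omega^k$ create extra coincidences between orbit elements. Here I would use $\ozeta_{ji}(q^{-d_{ij}})=\ozeta_{ij}(q^{-d_{ij}})=0$, and work separately over each orbit $\sigma^k(i)$, after unfolding every variable into its $m_i$ coordinates.
\end{itemize}
Concretely, I would substitute $z_{\sigma^k(i)a} = z_{ia}\omega^k$ and rewrite the folded integral as an unfolded integral in which $F$ becomes a Laurent polynomial satisfying the unfolded wheel conditions. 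The residue bookkeeping of \cite{N New} for strongly symmetrizable $\fg$ then applies essentially verbatim. What remains to check is that the $\omega$-twisted contours can be nested compatibly with this substitution.
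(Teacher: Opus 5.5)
Your overall route is the paper's: deform the nested contours of \eqref{eqn:pair formula} toward a common contour, read each pole of $1/\ozeta$ as a pole of an unfolded factor by regarding the moving variable as having color $\sigma^k(i_n)$, and let the wheel conditions \eqref{eqn:twisted wheel strongly} kill the dangerous poles. But the mechanism you give in Step~2 is wrong, and so is the Step~1 identity as you state it.

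Same-color poles do not cancel. The poles of $1/\ozeta_{ii}(z_b/z_a)$ at $z_b=z_aq^{-d_{ii}}$ (up to roots of unity) \emph{are} the zeros of $\ozeta_{ii}$, and nothing in the integrand compensates them. The identity $\ozeta_{ii}(q^{-d_{ii}})=0$ is what makes the wheel-condition space closed under the shuffle product; in the pairing it is exactly the source of these poles. Their residues are generically nonzero, and no wheel condition forces $F$ to vanish there. Take $\sigma=\text{Id}$, the element $e_{i,0}e_{i,0}$ and $F=1$. The nested integral equals $1$. For $|q|>1$, the integral over $|z_1|=|z_2|$ (which is your $\langle \bar{\tUpsilon}^+(e_{i,0}e_{i,0}),1\rangle_{\text{sh}}$) equals $q^{-d_{ii}}$; the difference is the residue at $z_2=z_1q^{-d_{ii}}$. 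Taking $|q|<1$ does not help. Then a two-variable mixed pole $z_b=z_aq^{-d_{ij}}$ with $d_{ij}<0$ is crossed instead, and no wheel condition vanishes there. So in no regime do all residues vanish, and a plain symmetric common-contour pairing cannot reproduce $\langle\cdot,\cdot\rangle$, whatever the normalization.

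The paper's argument (with $|q|>1$) does not try to avoid these residues. Moving $z_1,\dots,z_{n-1}$ together inevitably picks up residues along same-color geometric progressions $z_{a_s}=z_{a_{s-1}}q^{-d_{ii}}=\cdots=z_{a_1}q^{(1-s)d_{ii}}$. These are expected in the standard argument of \cite{N Wheel, N New}. What must be shown is only that moving the next variable $z_n$ produces no poles of any other nature.

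The idea your proposal is missing is that, as in \cite[Lemma 3.23]{N New}, $z_n$ only needs to be moved up to the midpoint of each progression. The mixed poles crossed on the way are then $z_n=z_{a_t}q^{-d_{ji}}$, with $z_n$ reinterpreted as a variable of color $j=\sigma^k(i_n)$ and $t\geq\frac{s+1}2$. For these, the whole wheel with bottom row $z_{a_t},z_{a_{t-1}},\dots,z_{a_{t+c_{ij}}}$ lies inside the progression, so $F$ vanishes there and the poles are fake. Without this restriction, your claim that only wheel configurations survive is not automatic. Moving $z_n$ all the way to the common contour can cross mixed poles near the top of a progression where the wheel does not fit; this happens as soon as $-d_{ij}>d_{ii}$.

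Two smaller points:
\begin{itemize}
\item Your global substitution turning the folded integral into an unfolded one is not an identity. For a single pair of variables, $1/\ozeta_{i_bi_a}$ contains all $m$ twisted factors $\zeta_{\sigma^k(i_b)i_a}(\omega^k\,\cdot)$, so the color reinterpretation must be done pole by pole, as in the paper.
\item You should invoke the wheel conditions defining $\oCS^-$ directly, not Proposition~\ref{prop:twisted wheel strongly}, whose proof relies on this Claim.
\end{itemize}
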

		
		\medskip
		
		\noindent We will prove Claim~\ref{claim:key} after showing how it allows us to complete the proof of Proposition~\ref{prop:twisted wheel strongly}. By \eqref{eqn:key}, the pairings \eqref{eqn:pair twisted}-\eqref{eqn:pair opposite twisted} descend to pairings
		\begin{align}
			& \Big(\text{Im }\bar{\tUpsilon}^+ \Big) \otimes \oCS^- \xrightarrow{\langle \cdot, \cdot \rangle} \BC \label{eqn:pair descended twisted} \\
			&\oCS^+ \otimes \Big( \text{Im }\bar{\tUpsilon}^- \Big) \xrightarrow{\langle \cdot, \cdot \rangle} \BC \label{eqn:pair opposite descended twisted}
		\end{align}
		which are non-degenerate in the $\oCS^\pm$ factor. Since $\oCS^\pm \supseteq \text{Im }\bar{\tUpsilon}^\pm$, if all the above were finite-dimensional $\BC$-vector spaces, it would immediately follow that
		\begin{equation}
			\label{eqn:equality}
			\oCS^\pm = \text{Im }\bar{\tUpsilon}^\pm
		\end{equation}
		and Proposition~\ref{prop:twisted wheel strongly} would be proved. In the current infinite-dimensional setup, one proves \eqref{eqn:equality} by using the combinatorics of words argument of \cite[Theorem 2.13]{N Wheel} and \cite[Theorem 2.11]{N Arbitrary}. As a consequence of \eqref{eqn:equality}, \eqref{eqn:pair descended twisted}-\eqref{eqn:pair opposite descended twisted} yields two pairings 
		\[
		\oCS^+ \otimes \oCS^- \xrightarrow{\langle \cdot, \cdot \rangle} \BC 
		\] 
		which coincide due to a calculation analogous to \cite[Remark 3.16]{N Wheel}. As we have 
		\[
		\bar{\UU}^\pm = \bar{\tUU}^{\pm} \Big/ \Big( \text{Ker }\bar{\tUpsilon}^{\pm} \Big)
		\]
		by definition, the first isomorphism theorem implies that \eqref{eqn:bar tupsilon} descends to \eqref{eqn:upsilon twisted strongly}. \end{proof}
	
	\medskip 
	
	\begin{proof} \emph{of Claim~\ref{claim:key}:} We sketch the usual argument given in \cite[Proposition 3.3]{N Wheel}, see also \cite[Proposition 2.22]{N Symmetric} and \cite[Lemma 3.23]{N New}. Assume $|q|>1$ for ease of explanation, although the entire argument is algebraic and will not depend on this restriction. The principle is to start from formula \eqref{eqn:pair formula} and gradually move the contours of integration from $|z_1| \gg \cdots \gg |z_n|$ to $|z_1| = \cdots = |z_n|$. Suppose for the purpose of illustration that we performed the first $n-1$ steps of this algorithm, in other words we have already moved the variables $z_1,\dots,z_{n-1}$ toward one and the same contour. Along the way, we inevitably pick up poles at
		\begin{equation}
			\label{eqn:poles}
			z_{a_s} = z_{a_{s-1}} q^{-d_{ii}}  = z_{a_{s-2}} q^{-2d_{ii}}  = \cdots = z_{a_1} q^{(1-s)d_{ii}} 
		\end{equation}
		for various $n-1 \geq a_s > a_{s-1} > \cdots > a_1 \geq 1$ with $i_{a_s} = \cdots = i_{a_1} =: i$. Then at the $n$-th step of the algorithm (see \cite[Claim 3.5]{N Wheel} and \cite[Claim 3.24]{N New}), we must ensure that moving the variable $z_n$ toward the other variables does not produce any poles of a different nature from \eqref{eqn:poles}. Indeed, the poles all come from the zeroes of
		\[
		\ozeta_{i_ni_{a_s}} \left(\frac {z_n}{z_{a_s}} \right), \ \dots, \ \ozeta_{i_ni_{a_1}} \left(\frac {z_n}{z_{a_1}} \right)
		\]
		In other words, there might be simple poles at the following values
		\[
		z_n = z_{a_s} \omega^{-k} q^{-d_{\sigma^k(i_n)i}}, \ \dots, \ z_n = z_{a_1} \omega^{-k} q^{-d_{\sigma^k(i_n)i}}
		\]
		If we let $j = \sigma^k(i_n)$ and interpret $z_n$ as a variable of color $j$ instead of color $i_n$ (see formulas \eqref{eqn:formula 1}-\eqref{eqn:formula 2}), then the above poles take the form
		\begin{equation}
			\label{eqn:poles 3}
			z_n = z_{a_s} q^{-d_{ji}}, \ \dots, \ z_n = z_{a_1} q^{-d_{ji}}
		\end{equation} 
		However, as explained in the proof of \cite[Lemma 3.23]{N New}, we only need to move the contour of $z_n$ from $0$ to the midpoint of the geometric progression \eqref{eqn:poles}, i.e. to
		\begin{equation}
			\label{eqn:where it should}
			|z_n| = \left|z_{a_s} q^{\frac {(1-s)d_{ii}}2}\right|
		\end{equation}
		Therefore, let us restrict attention to the following pole among \eqref{eqn:poles 3}
		\begin{equation}
			\label{eqn:fake pole}
			z_n = z_{a_t} q^{-d_{ji}}
		\end{equation}
		with $t \geq \frac {s+1}2$. The fact that $F$ vanishes at the specialization \eqref{eqn:twisted wheel strongly} implies that we have a zero due to the presence of the triangle
		\[
		\xymatrix{& & z_n \ar@{-}[rrd] & & \\
			z_{a_t} \ar@{-}[rru] &  z_{a_{t-1}} \ar@{-}[l] & \cdots \ar@{-}[l] &  z_{a_{t-1+c_{ij}}} \ar@{-}[l] & z_{a_{t+c_{ij}}} \ar@{-}[l]}
		\]
		with all the variables in the bottom row among the geometric progression \eqref{eqn:poles}. Thus, \eqref{eqn:fake pole} are not real poles, and thus we can move the contour of $z_n$ all the way to \eqref{eqn:where it should} without picking up any residues. This completes the proof of Claim~\ref{claim:key}. \end{proof} 
	
	\medskip 
	
	\subsection{Twisted quantum loop algebras}
	\label{sub:twisted quantum loop}
	
	In the generality of Subsection~\ref{sub:quivers automorphism}, we may sometimes require the unfolded and folded zeta functions to satisfy the relation
	\begin{equation}
		\label{eqn:limit folded}
		\frac {\ozeta_{\oi \oj}(x)}{\ozeta_{\oj \oi}(x^{-1})} = \prod_{k=1}^m \frac {\zeta_{\sigma^\ell(i_0)\sigma^k(j_0)} (x\omega^{\ell-k})}{\zeta_{\sigma^k(j_0) \sigma^{\ell}(i_0)} (x^{-1}\omega^{k-\ell})}
	\end{equation}
	for all $\oi, \oj \in \oI$ and $\ell \in \BZ/m\BZ$, where $i_0,j_0 \in I$ denote the distinguished preimages of $\oi, \oj$. This can be achieved by appropriately fixing the monomials that measure the discrepancy between the LHS and RHS of \eqref{eqn:unfolded zeta}, and we note that it holds in the particular case of symmetrizable Kac-Moody Lie algebras due to formula \eqref{eqn:zeta strongly twisted exchange}. Whenever formula \eqref{eqn:limit folded} holds, we define the double quantum loop algebras
	\[
	\UU \text{ and } \oUU
	\]
	as in Subsection~\ref{sub:double}. We may now apply the discussion in Subsection~\ref{sub:wheel conditions} to the twisted zeta functions \eqref{eqn:zeta strongly twisted}, and recall the principle that the wheel conditions \eqref{eqn:twisted wheel strongly} are dual to, and hence determine, all relations between $e$'s and $f$'s in the corresponding quantum loop algebra.
	
	\medskip 
	
	\begin{definition}
		\label{def:twisted clunky}
		
		Suppose $\fg$ is a strongly symmetrizable Kac-Moody Lie algebra, endowed with an automorphism $\sigma$ of order $m$. The corresponding twisted quantum loop algebra is defined as
		\[
		U_q(L\fg^{\sigma}) = \BC \Big \langle e_{i,d}, f_{i,d}, \ph^\pm_{i,d'} \Big \rangle_{i \in I, d \in \BZ, d' \geq 0} \Big/ \Big(\text{relations \eqref{eqn:rel quantum 0 twisted strongly}-\eqref{eqn:rel quantum 9 twisted strongly}} \Big) 
		\]
		where 
		\begin{equation}
			\label{eqn:rel quantum 0 twisted strongly}
			e_{\bigcirc} = f_{\bigcirc} = 0 \text{ as }\bigcirc \text{ runs over the wheels \eqref{eqn:twisted wheel strongly}}
		\end{equation} 
		\begin{equation}
			\label{eqn:rel quantum 1 twisted strongly}
			e_{\sigma(i),d} = e_{i,d} \omega^d, f_{\sigma(i),d} = f_{i,d} \omega^d, \ph^\pm_{\sigma(i),d} = \ph^\pm_{i,d} \omega^{\pm d}
		\end{equation} 
		\begin{equation}
			\label{eqn:rel quantum 2 twisted strongly}
			e_i(x) e_j(y) \prod_{k=1}^m \left( x\omega^k - y q^{d_{\sigma^k(i)j}} \right) = e_j(y) e_i(x) \prod_{k=1}^m \left( x\omega^k q^{d_{\sigma^k(i)j}} - y \right)
		\end{equation}
		\begin{equation}
			\label{eqn:rel quantum 3 twisted strongly}
			f_j(y) f_i(x) \prod_{k=1}^m \left( x\omega^k - y q^{d_{\sigma^k(i)j}} \right) = f_i(x) f_j(y) \prod_{k=1}^m \left( x\omega^k q^{d_{\sigma^k(i)j}} - y \right)
		\end{equation}
		\begin{equation}
			\label{eqn:rel quantum 4 twisted strongly}
			\left[\ph^+_{i}(x), \ph^+_{j}(y)\right] = \left[\ph^+_{i}(x), \ph^-_{j}(y)\right] = \left[\ph^-_{i}(x), \ph^-_{j}(y)\right] = 0, \ \ph^+_{i,0}\ph^-_{i,0} = 1
		\end{equation} 
		\begin{align}
			&\ph^\pm_i(x) e_j(y) = e_j(y) \ph^\pm_i(x) \prod_{k=1}^m \frac {x\omega^k q^{d_{\sigma^k(i)j}}-y}{x\omega^k-y q^{d_{\sigma^k(i)j}}} \label{eqn:rel quantum 5 twisted strongly} \\
			&f_j(y)\ph^\pm_i(x)  = \ph^\pm_i(x)  f_j(y) \prod_{k=1}^m \frac {x\omega^k q^{d_{\sigma^k(i)j}}-y}{x\omega^k-y q^{d_{\sigma^k(i)j}}} \label{eqn:rel quantum 7 twisted strongly} 
		\end{align}
		\begin{equation}
			\label{eqn:rel quantum 9 twisted strongly}
			\Big[e_{i}(x), f_{j}(y)\Big] = \left( \sum_{k=1}^m \frac {\delta_{\sigma^k(i)j}\delta(\frac{x\omega^k}{y})}{m_i} \right) \frac {\ph^+_i(x) - \ph^-_j(y)}{q_i-q_i^{-1}}
		\end{equation}
		for all $i,j \in I$. 
		
	\end{definition}
	
	\medskip 
	
	\noindent We will now relate Definition~\ref{def:twisted clunky} to the existing definition of twisted quantum affine algebras ($\fg$ of finite type), and provide explicit formulas for twisted quantum toroidal algebras ($\fg$ of affine type). To make the aforementioned formulas more presentable, we will henceforth replace the zeta functions \eqref{eqn:zeta strongly} by
	\begin{equation}
		\label{eqn:zeta prime strongly}
		\zeta'_{ij}(x) \sim \begin{cases} \frac {1-xq^{d_{ii}}}{1-x} &\text{if }i = j \\ 1-xq^{d_{ij}} &\text{if }i \sim j \\ 1 &\text{otherwise} \end{cases}
	\end{equation}
	where $i \sim j$ means that $d_{ij} < 0$ (i.e. there is an edge between $i$ and $j$ in the Dynkin diagram of $\fg$). Therefore, the twisted zeta functions \eqref{eqn:zeta strongly twisted} should be replaced by 
	\begin{equation}
		\label{eqn:zeta prime strongly twisted}
		\ozeta'_{ij}(x) = \prod_{k=1}^m \zeta'_{\sigma^k(i)j}(\omega^kx) 
	\end{equation}
	(see Examples~\ref{ex:a2} and \ref{ex:a3} for the cases of type $A_2$ and $A_3$, respectively). We are allowed to make the modification $\zeta \leadsto \zeta'$ because the prime zeta functions only differ from their non-primed counterparts by certain powers of $1-x$ \footnote{It is a general principle (that is quite easy to prove) that if we have collections of zeta functions
		\[
		\{\zeta_{ij}(x)\}_{i,j \in I} \quad \text{and} \quad \{\zeta'_{ij}(x)\}_{i,j \in I} \quad \text{such that} \quad \frac {\zeta'_{ij}(x)}{\zeta_{ij}(x)} = \frac {\zeta'_{ji}(x^{-1})}{\zeta_{ji}(x^{-1})}
		\]
		the corresponding algebras $\CS, \UU$ and $\CS',\UU'$ of Subsections~\ref{sub:zeta}-\ref{sub:double} are isomorphic to each other.}. With respect to the conventions \eqref{eqn:zeta prime strongly}-\eqref{eqn:zeta prime strongly twisted}, relations \eqref{eqn:rel quantum 2 twisted strongly}-\eqref{eqn:rel quantum 3 twisted strongly} can be strengthened to
	\begin{align} 
		\primeeqtag{eqn:rel quantum 2 twisted strongly}
		\label{eqn:rel prime quantum 2 twisted strongly}
		e_i(x) e_j(y) \prod_{1 \leq k \leq m}^{d_{\sigma^k(i)j} 
			\neq 0} \left( x\omega^k - y q^{d_{\sigma^k(i)j}} \right) = e_j(y) e_i(x) \prod_{1 \leq k \leq m}^{d_{\sigma^k(i)j} 
			\neq 0} \left( x\omega^k q^{d_{\sigma^k(i)j}} - y \right) \\
		\primeeqtag{eqn:rel quantum 3 twisted strongly}
		\label{eqn:rel prime quantum 3 twisted strongly}
		f_j(y) f_i(x) \prod_{1 \leq k \leq m}^{d_{\sigma^k(i)j} 
			\neq 0} \left( x\omega^k - y q^{d_{\sigma^k(i)j}} \right) = f_i(x) f_j(y) \prod_{1 \leq k \leq m}^{d_{\sigma^k(i)j} \neq 0} \left( x\omega^k q^{d_{\sigma^k(i)j}} - y \right)
	\end{align}
	However, this comes at the cost of removing relations \eqref{eqn:rel quantum 0 twisted strongly} for various wheels $\bigcirc$ that disappear due to $\zeta'_{ij}(x)$, $\ozeta'_{ij}(x)$ having fewer linear factors than $\zeta_{ij}(x)$, $\ozeta_{ij}(x)$. Thus, the primed zeta functions will make the subsequent formulas easier to follow.
	
	\medskip 
	
	\subsection{Twisted quantum affine algebras}
	\label{sub:affine}
	
	While Definition~\ref{def:twisted clunky} provides a general construction of twisted quantum loop algebras, the experienced reader might observe that relations \eqref{eqn:rel quantum 0 twisted strongly} do not match the Drinfeld-Serre relations usually found in the literature (\cite{D inj, Dr}) for finite type $\fg$. This happens for the same reason that made formulas \eqref{eqn:km 1} and \eqref{eqn:km 2} provide different, but equivalent, definitions of untwisted quantum loop algebras: there is simply nothing canonical about the relations $e_{\bigcirc} = f_{\bigcirc} = 0$, and we can replace them by any other products of $e$'s and $f$'s which are dual to the wheel conditions \eqref{eqn:twisted wheel strongly}. In particular, we will show that the usual Drinfeld-Serre relations provide such products in the case of finite type $\fg$. To this end, let us recall that the following pictures represent the only non-trivial automorphisms of a finite type Dynkin diagram.
	
	\[
	\text{Type }A_{2n}
	\begin{tikzpicture}[
		>=Stealth,
		vertex/.style={circle, fill=black, inner sep=1.5pt},
		lab/.style={below=3pt, font=\small}
		]
		\node[vertex, label={[lab]above:$1$}] (1) at (0,0) {};
		\node[vertex, label={[lab]above:$2$}] (2) at (1.5,0) {};
		\node[vertex, label={[lab]above:$n-1$}] (n-1) at (3,0) {};
		\node[vertex, label={[lab]above:$n$}] (n) at (4.5,0) {};
		\node[vertex, label={[lab]above:$2n-1$}] (2n-1) at (6,0) {};
		\node[vertex, label={[lab]above:$2n$}] (2n) at (7.5,0) {};
		
		\draw[-] (1) -- (2);
		\draw (2) -- (1.75,0);
		\node at (2.25,0) {$\cdots$};
		\draw (2.75,0) -- (n-1);
		\draw[-] (n-1) -- (n);
		\draw (n) -- (4.75,0);
		\node at (5.25,0) {$\cdots$};
		\draw (5.75,0) -- (2n-1);
		\draw[-] (2n-1) -- (2n);
		
		\draw[dashed, bend left=55] (1) to (2n);
		\draw[dashed, bend left=55] (2) to (2n-1);
		\draw[dashed, bend left=55] (n-1) to (n);
	\end{tikzpicture}
	\]
	
	\[
	\text{Type }A_{2n+1}
	\begin{tikzpicture}[
		>=Stealth,
		vertex/.style={circle, fill=black, inner sep=1.5pt},
		lab/.style={below=3pt, font=\small}
		]
		\node[vertex, label={[lab]above:$1$}] (1) at (0,0) {};
		\node[vertex, label={[lab]above:$2$}] (2) at (1.5,0) {};
		\node[vertex, label={[lab]above:$n-1$}] (n-1) at (3,0) {};
		\node[vertex, label={[lab]above:$n$}] (n) at (4.5,0) {};
		\node[vertex, label={[lab]above:$n+1$}] (n+1) at (6,0) {};
		\node[vertex, label={[lab]above:$2n$}] (2n) at (7.5,0) {};
		\node[vertex, label={[lab]above:$2n+1$}] (2n+1) at (9,0) {};
		
		\draw[-] (1) -- (2);
		\draw (2) -- (1.75,0);
		\node at (2.25,0) {$\cdots$};
		\draw (2.75,0) -- (n-1);
		\draw[-] (n-1) -- (n);
		\draw[-] (n) -- (n+1);
		\draw (n+1) -- (6.25,0);
		\node at (6.75,0) {$\cdots$};
		\draw (7.25,0) -- (2n);
		\draw[-] (2n) -- (2n+1);
		
		\draw[dashed, bend left=55] (1) to (2n+1);
		\draw[dashed, bend left=55] (2) to (2n);
		\draw[dashed, bend left=55] (n-1) to (n+1);
		\draw[dashed, -] (n) to[out=45, in=135, looseness=20] (n);
	\end{tikzpicture}
	\]
	
	\[
	\text{Type }D_{n}
	\begin{tikzpicture}[
		>=Stealth,
		vertex/.style={circle, fill=black, inner sep=1.5pt},
		lab/.style={below=3pt, font=\small}
		]
		
		\node[vertex, label={[lab]above:$1$}] (1) at (0,0) {};
		\node[vertex, label={[lab]above:$2$}] (2) at (1.5,0) {};
		\node[vertex, label={[lab]above:$n-2$}] (n-2) at (3,0) {};
		\node[vertex, label={[lab]above:$n-1$}] (n-1) at (4.5,-0.5) {};
		\node[vertex, label={[lab]above:$n$}] (n) at (4.5,0.5) {};
		
		\draw[-] (1) -- (2);
		\draw (2) -- (1.75,0);
		\node at (2.25,0) {$\cdots$};
		\draw (2.75,0) -- (n-2);
		\draw[-] (n-2) -- (n-1);
		\draw[-] (n-2) -- (n);
		
		\draw[dashed, bend left=55] (n) to (n-1);
		\draw[dashed, -] (1) to[out=45, in=135, looseness=20] (1);
		\draw[dashed, -] (2) to[out=45, in=135, looseness=20] (2);
		\draw[dashed, -] (n-2) to[out=45, in=135, looseness=20] (n-2);
	\end{tikzpicture}
	\]
	
	\[
	\text{Type }E_{6}
	\begin{tikzpicture}[
		>=Stealth,
		vertex/.style={circle, fill=black, inner sep=1.5pt},
		lab/.style={below=3pt, font=\small}
		]
		
		\node[vertex, label={[lab]above:$1$}] (1) at (0,0) {};
		\node[vertex, label={[lab]above:$2$}] (2) at (1.5,0) {};
		\node[vertex, label={[lab]above:$3$}] (3) at (3,0.5) {};
		\node[vertex, label={[lab]above:$4$}] (4) at (4.5,0.5) {};
		\node[vertex, label={[lab]above:$5$}] (5) at (3,-0.5) {};
		\node[vertex, label={[lab]above:$6$}] (6) at (4.5,-0.5) {};

		\draw[-] (1) -- (2);
		\draw[-] (2) -- (3);
		\draw[-] (3) -- (4);
		\draw[-] (2) -- (5);
		\draw[-] (5) -- (6);
		
		\draw[dashed, bend left=55] (3) to (5);
		\draw[dashed, bend left=55] (4) to (6);
		\draw[dashed, -] (1) to[out=45, in=135, looseness=20] (1);
		\draw[dashed, -] (2) to[out=45, in=135, looseness=20] (2);
	\end{tikzpicture}
	\]
	
	\[
	\text{Type }D_{4}
	\begin{tikzpicture}[
		>=Stealth,
		vertex/.style={circle, fill=black, inner sep=1.5pt},
		lab/.style={below=3pt, font=\small}
		]
		
		\node[vertex, label={[left=1pt]:$1$}] (1) at (0,0) {};
		\node[vertex, label={[lab]above:$2$}] (2) at (1.5,0) {};
		\node[vertex, label={[lab]above:$3$}] (3) at (2.25,1.3) {};
		\node[vertex, label={[lab]above:$4$}] (4) at (2.25,-1.3) {};

		\draw[-] (1) -- (2);
		\draw[-] (2) -- (3);
		\draw[-] (2) -- (4);
		
		\draw[dashed, ->, bend left=55] (1) to (3);
		\draw[dashed, ->, bend left=55] (3) to (4);
		\draw[dashed, ->, bend left=55] (4) to (1);
	\end{tikzpicture}
	\]
	
	\medskip
	
	\begin{proposition}
		\label{prop:affine}
		
		For any finite type Lie algebra $\fg$ endowed with an automorphism $\sigma$ of order $m$, the usual correspondence of generators yields an isomorphism between the twisted quantum loop algebra $\twU$ of Definition~\ref{def:twisted clunky} and the so-called twisted quantum affine algebra 
		\[
		\BC\Big\langle e_{i,d}, f_{i,d}, \varphi_{i,d'}^\pm \Big\rangle_{i \in I, d \in \BZ, d' \in \BN} \Big / \Big(\text{\eqref{eqn:rel quantum 1 twisted strongly}, \eqref{eqn:rel prime quantum 2 twisted strongly}, \eqref{eqn:rel prime quantum 3 twisted strongly}, \eqref{eqn:rel quantum 4 twisted strongly}-\eqref{eqn:rel quantum 9 twisted strongly} and \eqref{eqn:cubic serre 1}-\eqref{eqn:cubic serre 3 in A2n}}\Big)
		\]
		where the latter relations depend on the following cases:
		
		\medskip 
		
		Case 1: $i \sim j$ and there are no other vertices in the orbit of $i$  connected to $j$. Assuming that $i$ is not one of the middle two nodes in type $A_{2n}$, we have: 
		\begin{equation}
			\label{eqn:cubic serre 1}
			\mathrm{Sym}  \Big[e_j(y) e_i(x) e_i(x') - (q^{m_i} + q^{-m_i}) e_i(x) e_j(y) e_i(x') +  e_i(x) e_i(x') e_j(y) \Big] = 0
		\end{equation}
		
		\medskip 
		
		Case 2: $i\sim j$ and there are exactly $m$ vertices in the orbit of $i$ connected to $j$:
		\begin{equation}
			\label{eqn:cubic serre 3}
			\begin{split}
				\mathrm{Sym} \Big[ \frac{x^mq^{2m} - {x'}^m}{xq^{2} - x'} \Big( e_j(y) e_i(x) e_i(x')& - (q^{m} + q^{-m}) e_i(x) e_j(y) e_i(x') \\
				& +  e_i(x) e_i(x') e_j(y) \Big) \Big] = 0
			\end{split}
		\end{equation}
		
		\medskip
		
		Case 3: at the middle node $n$ in type $A_{2n}$:
		\begin{equation}
			\label{eqn:cubic serre 1 in A2n}
			\mathrm{Sym} \Big[ \Big( q^{-3/2}x - (q^{1/2} + q^{-1/2}){x'} + q^{3/2}{x''} \Big) e_n(x)e_n(x')e_n(x'')\Big] =0
		\end{equation}
		
		Case 4: at the middle node $n$ in type $A_{2n}$: \footnote{In this formula we correct a typo from \cite{H KR} and \cite{W QQ}. The formula matches the notation of \cite{Dr}.}
		\begin{equation}
			\label{eqn:cubic serre 3 in A2n}
			\begin{split}
				\mathrm{Sym} \Big[ (xq + x') \Big( e_{n-1}(y) e_n(x) e_n(x')& - (q + q^{-1}) e_n(x) e_{n-1}(y) e_n(x') \\
				& +  e_n(x) e_n(x') e_{n-1}(y) \Big) \Big] = 0
			\end{split}
		\end{equation}
		
		\noindent Above, ``\emph{Sym}" denotes symmetrization with respect to the variables $x$,$x'$ and $x''$. Though we will not write them down explicitly, it is implied that one also imposes the opposite relations of the ones above with $e$'s replaced by $f$'s (i.e. whenever we have $\cdots ee'\cdots = 0$ we also impose $\cdots f'f\cdots = 0$).
		
	\end{proposition}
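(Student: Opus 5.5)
The plan follows the principle recalled in Subsection~\ref{sub:wheel conditions}: the relations $e_\bigcirc = f_\bigcirc = 0$ can be replaced by any family of elements of $\bar{\tUU}^\pm$ whose pairing with $F \in \oCV^\mp$ vanishes exactly when $F$ satisfies the wheel conditions \eqref{eqn:twisted wheel strongly}. Both algebras share \eqref{eqn:rel quantum 1 twisted strongly} and \eqref{eqn:rel quantum 4 twisted strongly}-\eqref{eqn:rel quantum 9 twisted strongly}. The primed quadratic relations \eqref{eqn:rel prime quantum 2 twisted strongly}-\eqref{eqn:rel prime quantum 3 twisted strongly} are equivalent to the unprimed ones once we pass to $\ozeta'$, by the footnote on $\zeta \leadsto \zeta'$. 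It therefore suffices to prove
\[
\text{Ker }\bar{\tUpsilon}^+ = \Big(\text{left-hand sides of \eqref{eqn:cubic serre 1}-\eqref{eqn:cubic serre 3 in A2n}}\Big)
\]
in the pre-quantum algebra for $\ozeta'$, together with the opposite statement for $f$'s.

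The argument of \cite{N Arbitrary} behind \eqref{eqn:compare 3} gives the ideal equality from two facts, so the steps are these.

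\begin{enumerate}
\item Each Serre element $S$ lies in $\text{Ker }\bar{\tUpsilon}^+$. This amounts to computing its shuffle image, a symmetrization in $z_{i1},z_{i2},z_{j1}$ (or in $z_{n1},z_{n2},z_{n3}$) of rational functions built from $\ozeta'$, and checking that it is zero.
\item For $F \in \oCV^-$ of the relevant degree, $\langle S, F \rangle = 0$ for all shifts of $S$ if and only if $F$ satisfies the twisted wheel conditions of the corresponding type.
\end{enumerate}

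Given these, Proposition~\ref{prop:twisted wheel strongly} and Claim~\ref{claim:key} show that the annihilator of the Serre ideal inside $\oCV^-$ equals $\oCS^-$. The combinatorics-of-words argument of \cite[Theorem 2.11]{N Arbitrary} then upgrades this to equality of ideals: the Serre ideal sits inside $\text{Ker }\bar{\tUpsilon}^+$, and both have the same annihilator under a pairing that is nondegenerate degree-by-degree on the quotient.

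For step 2 I compute $\langle S, F\rangle$ via \eqref{eqn:pair formula} as a contour integral in three variables. The combination of orderings in $S$ is chosen so that the sum of expansions telescopes into $\delta$-functions, much as in the formula preceding \eqref{eqn:compare 1}. The result is that $\langle S, F\rangle$ is (up to nonzero factors) a specialization of $F$ at a geometric progression. The cases go as follows.

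\begin{itemize}
\item \textbf{Case 1.} Vertex $j$ sees one neighbour in the orbit of $i$, and the zeta factors reduce to an untwisted $A_2$-type computation with $q^{m_i}$ in place of $q$, since $z_i^{m_i}$ is the natural variable. This recovers the classical \cite{NT} computation.
\item \textbf{Case 2.} The prefactor $\frac{x^mq^{2m}-x'^m}{xq^2-x'}$ exactly cancels the extra spurious poles $x' = xq^2\omega^k$, $k\neq 0$, coming from $\ozeta'_{ii}$. With these cancelled, the pairing localizes at the genuine wheel $z_{i1}=z_{j1}q^{-1}$, $z_{i2}=z_{j1}q$, interpreted through \eqref{eqn:formula 2}.
\item \textbf{Cases 3-4.} I use the explicit $A_2$ zeta function \eqref{eqn:example A2}, $\ozeta_{\bar 1\bar 1}(x)\sim \frac{(1-xq^2)(1+xq^{-1})}{1-x}$. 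Wheels of the form $z_{n1}=z_{n+1,1}q^{-1}=-z_{n1'}q^{-1}$ then produce a wheel among three variables of color $n$. Relation \eqref{eqn:cubic serre 1 in A2n} should pair to the specialization $F(x,-xq,xq^2)$ (up to symmetry), and \eqref{eqn:cubic serre 3 in A2n} covers the mixed wheel with $n-1$.
\end{itemize}

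The main obstacle is the middle node of $A_{2n}$, together with the converse direction in step 2. There I must check that the twisted wheel conditions \eqref{eqn:twisted wheel strongly}, after the identifications $z_{\sigma(i)a}=-z_{ia}$, are equivalent to exactly the specializations detected by \eqref{eqn:cubic serre 1 in A2n}-\eqref{eqn:cubic serre 3 in A2n}, with no extra or missing conditions. My first attempt is to enumerate all wheels for the tripled quiver in Example~\ref{ex:a2} (extended to a neighbour $n-1$), reduce each one to a specialization of a symmetric Laurent polynomial in the $z_{\bar n a}$, and match them one by one with the residue computations. The same case analysis verified for $D_4$ with $m=3$ would finish the proof. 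The $f$-relations follow by the anti-isomorphism $\tUUp \leftrightarrow \tUUm$ and the pairing \eqref{eqn:pair opposite twisted}.
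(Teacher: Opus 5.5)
Your plan is sound, and it implements the principle the paper states right after the Proposition. Pairing with the Serre elements detects the twisted wheel conditions, so by \cite{N Arbitrary} they generate the same ideal as the $e_{\bigcirc}$. Where you differ from the paper is in how you establish this duality.

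\textbf{Comparison of routes.} You propose to compute $\langle \text{Serre}, F\rangle$ head-on, telescoping the expansions over all orderings as in \cite{NT}. The paper never computes this pairing directly. In each case it introduces cubic elements $C$ (variants of $e_{\bigcirc}$, dual to the wheel by an \cite{NSS}-type $\delta$-function identity) and the quadratic relations $Q,Q'$. It then exhibits an explicit combination $S$ of quadratic relations multiplied by generators and of the $C$'s, for instance $C(x_1,x_2,y)x_2+C(x_2,x_1,y)x_1$, or $\sum_{\fS_3} C$ in Case 3. This $S$ equals the Serre element times a nonzero factor, e.g. $x_1x_2(1-q^2)$ in Case 1.
\begin{itemize}
\item One inclusion of ideals (Serre elements inside the ideal generated by the $C$'s and quadratic relations) thereby becomes a purely algebraic identity.
\item The converse reduces to pairing the $C$-combination with $F$, which is supported on disjoint wheel loci. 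In Case 1 a footnote even recovers $C$ from $S$ algebraically.
\end{itemize}
So the paper pays with ad hoc identities but only ever needs ready-made $\delta$-identities. Your route is more uniform and dispenses with $C,Q,Q'$. However, its real content is still to be done: checking that the prefactors in \eqref{eqn:cubic serre 3}--\eqref{eqn:cubic serre 3 in A2n} kill every non-wheel locus produced by the telescoping (six orderings in Case 3). Both inclusions of ideals then rest on the annihilator-to-ideal principle. Your step 1 is redundant: it follows from the ``if'' half of step 2 together with non-degeneracy of $\oUUp\otimes\oCS^-\to\BC$.

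\textbf{A correction to your Case 2 heuristic.} In every Case 2 instance, the orbit of $i$ consists of pairwise non-adjacent vertices. Hence $\ozeta'_{ii}(x)\sim\frac{1-xq^2}{1-x}$ has no $\omega$-shifted zeros, and the spurious loci do not come from it.
\begin{itemize}
\item The loci $x'=xq^2\omega^{k}$, $k\neq 0$, that the prefactor of \eqref{eqn:cubic serre 3} must kill arise from chaining poles of the mixed factors $\ozeta'_{ij},\ozeta'_{ji}\sim 1-q^{-m}x^m$ through $y$.
\item Concretely, these are the configurations $z_{i}=z_jq^{-1}$, $z_{\sigma^k(i)}=z_jq$. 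They are not wheels precisely because $\ozeta'_{ii}$ does not vanish there.
\item The situation you describe occurs in Case 4 instead: there the prefactor $xq+x'$ does match an extra zero of $\ozeta'_{nn}$, namely its factor $1+xq^{-1}$.
\end{itemize}
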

	
	\medskip 
	
	\noindent Proposition~\ref{prop:affine} involves a case-by-case check of the various finite type Dynkin diagram automorphisms, and we will establish it in Appendix A. In all cases, the gist of the Proposition is that
	\[
	\Big \langle \text{LHS of \eqref{eqn:cubic serre 1}-\eqref{eqn:cubic serre 3 in A2n}}, F \Big \rangle = 0 \ \Leftrightarrow \ \Big(F \text{ satisfies \eqref{eqn:twisted wheel strongly}} \Big) \ \Leftrightarrow \ \Big \langle e_{\bigcirc \text{ as in \eqref{eqn:twisted wheel strongly}}}, F \Big \rangle = 0 
	\]
	which establishes the equivalence of the generators-and-relations presentations of Definition~\ref{def:twisted clunky} and Proposition~\ref{prop:affine}. Each of these presentations has its own advantages and disadvantages: the one of Definition~\ref{def:twisted clunky} is unified and the relations have a shuffle algebra explanation, but they are rather clunky. On the other hand, the presentation of Proposition~\ref{prop:affine} features relations that are easier to work with, but their specific form differs on a case-by-case basis.
	
	\medskip 
	
	\subsection{Twisted quantum toroidal algebras}
	\label{sub:toroidal}
	
	Another important class of strongly symmetrizable Kac--Moody $\fg$ are the affine Lie algebras of type other than $A_1$ (the interested reader may consult Appendix~\ref{appendix2} for a list of all non-trivial automorphisms of affine Dynkin diagrams in types other than $A_{n-1}^{(1)}$, cf. \cite[Section 14]{Lu}). If $\fg$ is an affine Lie algebra endowed with a diagram automorphism, then the twisted quantum loop algebras of Definition~\ref{def:twisted clunky} will be called twisted quantum toroidal algebras, and the following result gives an alternate (i.e. Drinfeld-Serre) presentation for them.  
	
	\medskip
	
	\begin{proposition}
		\label{prop:toroidal}
		
		For any affine Lie algebra $\fg$ (of type other than $A_{n-1}^{(1)}$ and $D_3^{(2)}$) endowed with an automorphism $\sigma$ of order $m$, the usual correspondence of generators yields an isomorphism between the twisted quantum loop algebra $\twU$ of Definition~\ref{def:twisted clunky} and the twisted quantum toroidal algebra defined below
		\[
		\BC \Big\langle e_{i,d}, f_{i,d}, \varphi_{i,d'}^\pm \Big\rangle_{i \in I, d \in \BZ, d' \in \BN} \Big / \Big(\text{\eqref{eqn:rel quantum 1 twisted strongly}, \eqref{eqn:rel prime quantum 2 twisted strongly}, \eqref{eqn:rel prime quantum 3 twisted strongly}, \eqref{eqn:rel quantum 4 twisted strongly}-\eqref{eqn:rel quantum 9 twisted strongly} and \eqref{eqn:rel toroidal serre 1}-\eqref{eqn:rel toroidal serre 4}}\Big)
		\]
		where the latter relations depend on the following cases:
		
		\medskip
		
		Case 1: $i \sim j$ and there are no other vertices in the orbit of $i$  connected to $j$. Assuming that $\sigma(i)$ is not connected to $i$, we have: 
		
		\begin{equation}
			\label{eqn:rel toroidal serre 1}
			\mathrm{Sym} \Big[\sum_{k=0}^{1-c_{ij}} (-1)^k \binom{1-c_{ij}}{k}_{q_i^{m_i}}  e_i(x_1) \cdots e_i(x_k) e_j(y) e_i(x_{k+1}) \cdots e_i(x_{1-c_{ij}}) \Big] = 0
		\end{equation}
		
		\medskip
		
		Case 2: $i\sim j$ and there are exactly $r > 1$ vertices in the orbit of $i$ connected to $j$ (note that in this case we have $r=m$ and $m_i =1$, unless we are in type $D_{2n+1}^{(1)}$ with an order 4 automorphism, in which case we can have $r=2$). This can happen only when $c_{ij} = -1$:
		
		\begin{equation}
			\label{eqn:rel toroidal serre 2}
			\begin{split}
				\mathrm{Sym} \Big[ \frac{x^{rm_i}q_i^{2rm_i} - {x'}^{rm_i}}{x^{m_i}q_i^{2m_i} - {x'}^{m_i}} \Big( e_j(y) e_i(x) e_i(x')& - (q_i^{rm_i} + q_i^{-rm_i}) e_i(x) e_j(y) e_i(x') \\
				& +  e_i(x) e_i(x') e_j(y) \Big) \Big] = 0
			\end{split}
		\end{equation}
		
		\medskip
		
		Case 3: $i \sim \sigma(i)$, which can happen only at the middle node $n$ in types $C_n^{(1)}$ and $D_{2n+1}^{(k)}$ for $k = 1,2$ (note that in this case, $m_n =1$ if $m =2$ and $m_n = 2$ if $m = 4$):
		
		\begin{equation}
			\label{eqn:rel toroidal serre 3}
			\mathrm{Sym} \Big[ \Big( q_n^{-\frac 32 m_n}x^{m_n} - (q_n^{\frac 12 m_n} + q_n^{-\frac 12 m_n}){x'}^{m_n} + q_n^{\frac 32 m_n}{x''}^{m_n} \Big) e_n(x)e_n(x')e_n(x'')\Big] = 0
		\end{equation}
		
		\medskip
		Case 4: $i \sim \sigma(i)$, which can happen only at the middle node $n$ in types $C_n^{(1)}$ and $D_{2n+1}^{(k)}$ for $k=1,2$: 
		\begin{equation}
			\label{eqn:rel toroidal serre 4}
			\begin{split}
				\mathrm{Sym} \Big[ (x^{m_n}q_n^{m_n} + {x'}^{m_n}) \Big( e_{n-1}(y) e_n(x) e_n(x')& - (q_n^{m_n} + q_n^{-{m_n}}) e_n(x) e_{n-1}(y) e_n(x') \\
				& +  e_n(x) e_n(x') e_{n-1}(y) \Big) \Big] = 0
			\end{split}
		\end{equation}
		Above, ``\emph{Sym}" denotes symmetrization with respect to the $x$ variables.
		
	\end{proposition}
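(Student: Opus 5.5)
The strategy mirrors the one described after Proposition~\ref{prop:affine}. Both algebras share relations \eqref{eqn:rel quantum 1 twisted strongly}, \eqref{eqn:rel prime quantum 2 twisted strongly}, \eqref{eqn:rel prime quantum 3 twisted strongly} and \eqref{eqn:rel quantum 4 twisted strongly}-\eqref{eqn:rel quantum 9 twisted strongly}. By the Drinfeld double construction of Subsection~\ref{sub:double} and the triangular decomposition, it therefore suffices to show that the two-sided ideals of $\bar{\tUU}^\pm$ generated by the left-hand sides of \eqref{eqn:rel toroidal serre 1}-\eqref{eqn:rel toroidal serre 4} coincide with $\text{Ker }\bar{\tUpsilon}^\pm$. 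By Proposition~\ref{prop:twisted wheel strongly}, the latter ideal is generated by the $e_\bigcirc$ (resp.\ $f_\bigcirc$) for the wheels \eqref{eqn:twisted wheel strongly}. Following the principle in \cite{N Arbitrary} already used to deduce \eqref{eqn:km 2} from \eqref{eqn:km 1}, I would prove, for each Serre-type element $S$ and each $F\in\oCV^-$ of the appropriate degree, that
\[
\big\langle S, F \big\rangle = 0 \text{ for all shifts in } d \quad \Leftrightarrow \quad F \text{ satisfies the wheel conditions \eqref{eqn:twisted wheel strongly} for the pair } (i,j).
\]
Here $(i,j)$ runs over pairs with $i\sim j$, which are the only wheels left once we use the primed zeta functions \eqref{eqn:zeta prime strongly twisted}. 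Nondegeneracy of the pairing \eqref{eqn:pair twisted} in the $\oCV^-$ factor, combined with \eqref{eqn:compare 1}, then shows that each $S$ lies in $\text{Ker }\bar{\tUpsilon}^+$ and that the $S$'s generate it. The $f$ side follows by the opposite-algebra symmetry.

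For the computation itself I would use \eqref{eqn:pair formula}. Pairing a symmetrized combination $\text{Sym}[P(x)\, e_{i_1}(x_1)\cdots]$ against $F$ gives a sum of contour integrals over the different orderings. Since $F$ is color-symmetric, these combine into a single integral of $F$ times a rational function, evaluated as a sum of iterated residues. Exactly as in the untwisted computation of \cite{NT}, the $q$-binomial coefficients and the prefactors are chosen so that only the residues at the wheel locus survive. Under the identification \eqref{eqn:formula 2}, the folded zeta factors $\ozeta_{ij}$ have zeros at $z_j = z_i\omega^{-k}q^{-d_{\sigma^k(i)j}}$. The orbit points where $d_{\sigma^k(i)j}=0$ have already been removed by passing to $\zeta'$, so the surviving residues sit at the specializations in \eqref{eqn:twisted wheel strongly}.

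The cases split as follows. In Case~1 only one vertex of the orbit of $i$ meets $j$. The computation is then the untwisted one of \cite{NT}, run in the variables $x^{m_i}$ with $q$ replaced by $q^{m_i}$, because the loop factor of $\ozeta_{ii}$ becomes $(1-x^{m_i}q_i^{2m_i})/(1-x^{m_i})$. In Case~2, $r$ orbit points meet $j$, so $\ozeta_{ij}$ carries $r$ linear factors. The prefactor $\frac{x^{rm_i}q_i^{2rm_i}-x'^{rm_i}}{x^{m_i}q_i^{2m_i}-x'^{m_i}}$ is meant to cancel the spurious residues coming from the $r-1$ other orbit points, leaving the cubic Serre computation with parameter $q_i^{r m_i}$. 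Cases~3--4, where $i\sim\sigma(i)$, reduce locally to the middle node of $A_{2n}$ treated in Proposition~\ref{prop:affine}. One checks that \eqref{eqn:rel toroidal serre 3} pairs to zero exactly with the $i$--$\sigma(i)$ wheel, which in folded variables reads $z_{n1}^{m_n}\mapsto -q^{\ast}z_{n2}^{m_n}$ and similar. One also checks that \eqref{eqn:rel toroidal serre 4} handles the $(n,n-1)$ wheel.

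The step I expect to be the main obstacle is the converse direction, that these finitely many relations generate the entire kernel. One point is that, unlike in finite type, the affine diagram allows orbits of size $4$ and repeated adjacency, as in type $D_{2n+1}^{(1)}$ with $m=4$ and $r=2$. The other is that I must verify that no wheel in \eqref{eqn:twisted wheel strongly} falls outside Cases~1--4. I would handle this by going through the list of affine diagram automorphisms in Appendix~\ref{appendix2} case by case. For each pair $(i,j)$ I would identify which case applies and check that the vanishing of the pairing with $S$ is equivalent to the wheel condition, not merely implied by it. To do this I would follow the residue computation and show that the resulting linear combination of specializations of $F$ has invertible coefficients. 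This invertibility is where the genericity $q\notin\sqrt[\BN]{1}$ enters, and where the exclusions $A_{n-1}^{(1)}$ and $D_3^{(2)}$ are needed.
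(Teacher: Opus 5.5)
\textbf{Verdict.} Your plan is correct and has the same architecture as the paper's proof. The paper says, right after the Proposition (and carries out for finite type in Appendix~\ref{appendix1}), that one shows case by case that pairing the Serre-type left-hand sides with $F$ vanishes iff $F$ satisfies \eqref{eqn:twisted wheel strongly}. It then invokes the principle of \cite{N Arbitrary} that elements dual to the wheel conditions generate $\text{Ker }\bar{\tUpsilon}^\pm$. You follow exactly this outline.

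\textbf{Where you differ.} The difference lies in how the duality itself is established. You propose to evaluate $\langle \text{Sym}[\cdots], F\rangle$ head-on, as a sum of expansions of rational functions whose non-wheel residues must cancel, in the style of \cite{NT}. Appendix~\ref{appendix1} never pairs the Serre element directly. In each case it writes down a clunky cubic $C(x_1,x_2,y)$, a variant of $e_\bigcirc$, whose pairing with $F$ is a single product of delta functions by an expansion identity from \cite{NSS}. It then verifies a polynomial identity: the Serre left-hand side, times a unit such as $x_1x_2(1-q^2)$, equals a multiple of $C(x_1,x_2,y)x_2+C(x_2,x_1,y)x_1$ plus terms $e_iQ$, $Qe_i$, $Q'e_j$, $e_jQ'$ built from the quadratic relations. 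The symmetric combination of $C$'s is dual to vanishing on two disjoint loci, hence, by symmetry of $F$, to the wheel.

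\textbf{What each route buys.} The paper's route turns the equivalence into an identity in $\bar{\tUU}^+$ modulo quadratic relations and needs only one delta-function identity per case. However, the coefficients of the combination must be found by hand, and the identities written there are all cubic. Your route needs no ansatz and treats any $1-c_{ij}$ uniformly, at the cost of tracking every residue. Toroidal Case~1 needs this anyway, e.g.\ $c_{ij}=-2$ at the short node of $B_n^{(1)}$, where both routes reduce to \cite{NT} in $m_i$-th powers. Your reading of the Case~2 prefactor is right. With $X=x^{m_i}q_i^{2m_i}$ and $X'=x'^{m_i}$, it equals $\prod_{\epsilon^r=1,\,\epsilon\neq 1}(X-\epsilon X')$. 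This vanishes where $x$ and $x'$ are attached to $y$ through different vertices of the orbit of $i$, and not on the wheel itself.

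\textbf{A correction to your last paragraph.} The exclusions of $A_{n-1}^{(1)}$ and $D_3^{(2)}$ have nothing to do with invertibility of coefficients; they are configurations that Cases 1--4 do not cover. In $D_3^{(2)}$, the $\sigma$-orbit $\{0,2\}$ consists of short nodes both joined to the long node $1$ with $c_{01}=-2$, whereas Case~2 presupposes $c_{ij}=-1$. Type $A_{n-1}^{(1)}$ is handled separately in Subsection~\ref{sub:toroidal cycle}: its rotations can make a vertex adjacent to two vertices of its own orbit, and $A_1^{(1)}$ is not strongly symmetrizable.

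\textbf{Do the Appendix~\ref{appendix2} sweep in earnest.} As far as I can tell, it turns up one more uncovered configuration. In $C_3^{(1)}$ the swapped middle nodes $1,2$ are short, and $1$ is joined to the long node $0$ with $c_{10}=-2$. So the $(1,0)$ wheel has three variables of color $1$ and cannot be dual to the cubic relation \eqref{eqn:rel toroidal serre 4}. That case needs a quartic analogue or an exclusion.
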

	
	\medskip
	
	\noindent Proposition~\ref{prop:toroidal} can be checked following the same method as in twisted affine types of Subsection~\ref{sub:affine} (see Appendix~\ref{appendix1}), namely by showing that
	\[
	\Big \langle \text{LHS of \eqref{eqn:rel toroidal serre 1}-\eqref{eqn:rel toroidal serre 4}}, F \Big \rangle = 0 \ \Leftrightarrow \ \Big(F \text{ satisfies \eqref{eqn:twisted wheel strongly}} \Big) \ \Leftrightarrow \ \Big \langle e_{\bigcirc \text{ as in \eqref{eqn:twisted wheel strongly}}}, F \Big \rangle = 0 
	\]
	which establishes the equivalence of the generators-and-relations presentations of Definition~\ref{def:twisted clunky} and Proposition~\ref{prop:toroidal}, respectively.
	
	\medskip 
	
	\subsection{The cyclic quiver} 
	\label{sub:toroidal cycle} 
	
	We conclude with the case when $\fg$ is of type $A_{n-1}^{(1)}$, whose Dynkin diagram is a cycle with vertices labeled by $I = \BZ/n \BZ = \{1,\dots,n\}$. Consider the automorphism 
	\begin{equation}
		\label{eqn:sigma cycle}
		\sigma : \BZ/n\BZ \rightarrow \BZ/n\BZ, \qquad  \sigma(i) = i+k 
	\end{equation}
	for some fixed $k \in \BZ/n\BZ$. It corresponds to a rotation by $\frac {2\pi k}n$, see below for $k=1$:
	
	\[
	\begin{gathered}
		\begin{tikzpicture}[
			>=Stealth,
			vertex/.style={circle, fill=black, inner sep=1.5pt},
			lab/.style={below=3pt, font=\small}
			]
			\node[vertex, label={[lab]above:$1$}] (1) at (0,0) {};
			\node[vertex, label={[lab]above:$2$}] (2) at (1.5,0) {};
			\node[vertex, label={[lab]above:$n-1$}] (n-1) at (4.5,0) {};
			\node[vertex, label={[lab]above:$n$}] (n) at (6,0) {};
			\node[vertex, label={[lab]above:$0$}] (0) at (3,-1.25) {};
			
			\draw[-] (1) -- (2);
			\draw (2) -- (2.35,0);
			\node at (3,0) {$\cdots$};
			\draw (3.65,0) -- (n-1);
			\draw[-] (n-1) -- (n);
			\draw[-] (1) -- (0);
			\draw[-] (n) -- (0);
			
			\draw[dashed, ->, bend left=45] (1) to (2);
			\draw[dashed, ->, bend left=35] (2) to (2.45,0.18);
			\draw[dashed, ->, bend left=35] (3.55,0.18) to (n-1);
			\draw[dashed, ->, bend left=45] (n-1) to (n);
			\draw[dashed, ->, bend left=35] (n) to (0);
			\draw[dashed, ->, bend left=35] (0) to (1);
			
		\end{tikzpicture}
		\\[4pt]
	\end{gathered}
	\]
	
	\noindent In general, the order of $\sigma$ of \eqref{eqn:sigma cycle} is $m = \frac n{\gcd(n,k)}$. We have
	\[
	\zeta'_{ij}(x) = \frac {(1-xq^2)^{\delta_{ij}} (q^{-1} - x^{-1})^{\delta_{i+1,j}}(1-xq^{-1})^{\delta_{i-1,j}}}{(1-x)^{\delta_{ij}}}
	\]
	for all $i,j \in \BZ/n\BZ$ (the Kronecker delta is taken modulo $n$). Therefore, \eqref{eqn:folded zeta} reads
	\[
	\ozeta'_{\oi\oj}(x) = \frac {(1-xq^2)^{\delta_{\oi\oj}} (q^{-1} - x^{-1}\omega^{\frac {n\ell}m\delta_{\oj 1}})^{\delta_{\oi+1,\oj}}(1-x\omega^{\frac {n\ell}m \delta_{\oi 1}}q^{-1})^{\delta_{\oi-1,\oj}}}{(1-x)^{\delta_{\oi\oj}}}
	\]
	for all $\oi,\oj \in \BZ/\frac nm\BZ = \{1,\dots,\frac nm\}$ (the Kronecker delta is taken modulo $\frac nm$), where 
	\[
	\frac {mk}n \cdot \ell \equiv 1 \text{ mod }m
	\]
	uniquely determines $\ell \in \BZ/m\BZ$. Thus, we recognize that the twisted quantum toroidal algebra associated to the automorphism \eqref{eqn:sigma cycle} is
	\[
	U_q(L\fg^\sigma) \cong U_q(\mathfrak{sl}_{\frac nm, tor}) \Big|_{C = 1, D = \omega^{\ell}}
	\] 
	where the right-hand side refers to the deformed quantum toroidal algebra introduced in \cite[Section 5]{GKV} (it provides an interesting algebra even if $m=n$, in which case the cubic analogues of the Drinfeld-Serre relations go back to \cite{M}).
	
	\medskip 
	
	\subsection{Covering quivers}
	\label{sub:covering quivers}
	
	We will now consider a variant of the construction of folding quivers, which actually generalizes the setup of Subsection~\ref{sub:quivers automorphism} if the automorphism $\sigma$ acts freely on $I$. The term \emph{covering} will refer to a map of quivers 
	\[
	\fold : Q \rightarrow \oQ
	\]
	and we use the same notation for the induced function on vertex sets
	\begin{equation}
		\label{eqn:cover vertices}
		\fold : I \rightarrow \oI
	\end{equation}
	which we assume is surjective. We further assume that above every arrow $\oalpha$ of $\oQ$, there is a unique arrow of $Q$ ending at any preimage of the target of $\oalpha$, i.e. 
	\begin{equation}
		\label{eqn:assumption}
		\begin{split}
			&\forall \oalpha : \oi \rightarrow \oj \text{ and } j \in \fold^{-1}(\oj), \ \exists! i \in \fold^{-1}(\oi) \text{ and } \alpha : i \rightarrow j \text{ s.t. } \fold(\alpha) = \oalpha \\  
		\end{split}
	\end{equation}
	We fix the following data:
	
	\medskip 
	
	\begin{itemize}
		\item parameters $\{q_{\oalpha} \in \BK^*\}_{\oalpha \text{ arrow of }\oQ}$
		
		\medskip 
		
		\item a choice of distinguished preimages $I \supseteq J \xrightarrow{\sim} \oI$ for the function \eqref{eqn:cover vertices} 
		
		\medskip 
		
		\item a choice of shifts $\omega_i \in \BK^*$ for all $i \in I$ such that $\omega_i = 1$ if $i \in J$.
		
		\medskip 
		
	\end{itemize}
	
	\noindent Having made the choices above, we define
	\begin{equation}
		\label{eqn:matching parameters}
		q_{\alpha} = q_{\oalpha} \cdot \frac {\omega_j}{\omega_i}
	\end{equation} 
	for every arrow $\alpha : i\rightarrow j$ in $Q$ which maps onto an arrow $\oalpha : \oi \rightarrow \oj$ in $\oQ$. Then we define the \emph{upper zeta functions} as
	\begin{equation}
		\label{eqn:zeta above}
		\zeta_{ij}(x) \sim (1-x)^{-\delta_{ij}} \prod_{\alpha : i \rightarrow j} (1-xq_{\alpha})
	\end{equation} 
	and the \emph{lower zeta functions} as
	\begin{equation}
		\label{eqn:zeta below}
		\ozeta_{\oi \oj}(x) \sim (1-x)^{-\delta_{\oi \oj}} \prod_{\oalpha : \oi \rightarrow \oj} (1-xq_{\oalpha})
	\end{equation} 
	As before, the notation $\sim$ means that the two sides of the equation are equal up to a Laurent monomial, which we posit must be chosen so as to ensure the equation 
	\[
	\ozeta_{\oi \oj}(x) = \prod_{i' \in \fold^{-1}(\oi)} \zeta_{i'j_0} \left( \omega_{i'} x \right) 
	\]
	for all $\oi, \oj \in \oI$ (above, $j_0 \in J$ denotes the distinguished preimage of $\oj$). If we define 
	\[
	\ozeta_{ij}(x) = \prod_{i' \in \fold^{-1}(\fold(i))} \zeta_{i'j}  \left( \frac {\omega_{i'}}{\omega_i} x \right) 
	\]
	for all $i,j \in I$, then we have the relation
	\[
	\ozeta_{ij}(x) = \ozeta_{i'j}\left(\frac {\omega_{i'}}{\omega_i} x \right) 
	\]
	whenever $\fold(i)=\fold(i')$. This will allow us the freedom of indexing the lower zeta functions $\ozeta$ with the vertices in the upper quiver $Q$, which will be quite convenient. When discussing double quantum loop algebras as \eqref{eqn:quantum}, we will always assume that
	\begin{equation}
		\label{eqn:limit covering}
		\frac {\ozeta_{\oi \oj}(x)}{\ozeta_{\oj \oi}(x^{-1})} = \prod_{j \in \fold^{-1}(\oj)} \frac {\zeta_{ij} \left( \frac {x \omega_i}{\omega_j} \right)}{\zeta_{ji} \left( \frac {\omega_j}{x \omega_i} \right)}
	\end{equation}
	holds for all $\oi, \oj \in \oI$ and $i \in \fold^{-1}(\oi)$. The formula above implies that whenever the upper zeta functions are balanced in the sense of \eqref{eqn:limit}, so are the lower zeta functions. This can usually be ensured by appropriately fixing the Laurent monomials that control the discrepancy between the LHS and RHS of \eqref{eqn:zeta above}.
	
	\medskip 
	
	\begin{example}
		\label{ex:a3 covering}
		
		Let $Q$ be the quiver in Example~\ref{ex:a3}, but with the arrow parameters
		
		\[
		\begin{tikzpicture}[
			>=Stealth,
			vertex/.style={circle, fill=black, inner sep=1.5pt},
			lab/.style={below=3pt, font=\small}
			]
			
			\node[vertex, label={[lab]$1$}] (1) at (0,0) {};
			\node[vertex, label={[lab]$2$}] (2) at (1.8,0) {};
			\node[vertex, label={[lab]$3$}] (3) at (3.6,0) {};
			
			\draw[->, bend left=18] (1) to node[above] {$q^{-1}$} (2);
			\draw[->, bend left=18] (2) to node[below] {$q^{-1}$} (1);
			\draw[->, bend left=18] (2) to node[above] {$-q^{-1}$} (3);
			\draw[->, bend left=18] (3) to node[below] {$q^{-1}$} (2);
			\draw[->] (1) to[out=45, in=135, looseness=20] node[above] {$q^2$} (1);
			\draw[->] (2) to[out=45, in=135, looseness=20] node[above] {$q^2$} (2);
			\draw[->] (3) to[out=45, in=135, looseness=20] node[above] {$q^2$} (3);
		\end{tikzpicture}
		\]
		
		\noindent It covers the quiver $\oQ$ with parameters in the following picture
		
		\[
		\begin{tikzpicture}[
			>=Stealth,
			vertex/.style={circle, fill=black, inner sep=1.5pt},
			nodelab/.style={below=3pt, font=\small}
			]
			
			\node[vertex, label={[nodelab]$\bar{1}$}] (1) at (0,0) {};
			\node[vertex, label={[nodelab]$\bar{2}$}] (2) at (2.2,0) {};
			
			\draw[->, double, double distance=2pt, bend left=25] (1) to node[above] {$-q^{-1}$} node[below] {$q^{-1}$} (2);
			\draw[->,bend left=25] (2) to node[below] {$q^{-1}$} (1);
			\draw[->] (1) to[out=225, in=135, looseness=20] node[left] {$q^2$} (1);
			\draw[->] (2) to[out=45, in=315, looseness=20] node[right] {$q^2$} (2);
			
		\end{tikzpicture}
		\]
		
		\noindent (the map $\rho$ sends $1,2,3$ to $\bar{1},\bar{2},\bar{1}$, and the shifts are $\omega_1 = \omega_2 = 1$, $\omega_3 = -1$). In this case, the upper zeta function is
		\[
		\zeta_{ij}(x) \sim \begin{cases} \frac {1-xq^2}{1-x} &\text{if }i=j \\
			1-xq^{-1} &\text{if } (i,j) \in \{(1,2),(2,1),(3,2)\}\\
			1+xq^{-1} &\text{if } (i,j) = (2,3)  \\ 1 &\text{if } (i,j) \in \{(1,3),(3,1)\} \end{cases}
		\]
		while the lower zeta function is
		\[
		\ozeta_{\oi \oj}(x) \sim \begin{cases} \frac {1-xq^2}{1-x} &\text{if }\oi=\oj  \\ 1 - x^{2}q^{-2} &\text{if } (\oi,\oj) = (\bar{1}, \bar{2} ) \\ 1 - xq^{-1} &\text{if } (\oi,\oj) = (\bar{2}, \bar{1}  )\end{cases}
		\]
		Compare the formulas above with the unfolded and folded zeta functions of Example~\ref{ex:a3}; the two situations are incompatible mainly because the respective $\ozeta$ functions involve different numbers of linear factors.
		
	\end{example}
	
	\medskip 
	
	\begin{remark}
		\label{rem:intersection}
		
		If $\sigma$ is an automorphism which acts freely on the vertex set of $Q$ (i.e. $m_i = 1, \forall i \in I$ in the notation \eqref{eqn:m i}), the construction of Subsection~\ref{sub:quivers automorphism} is a particular case of that of Subsection~\ref{sub:covering quivers}. To see this, note that if we set 
		\[
		\omega_{\sigma^k(i_0)} = \omega^k
		\]
		for all distinguished $i_0 \in J$ (in the RHS, $\omega$ denotes a primitive $m$-th root of unity), then the lower zeta functions of \eqref{eqn:zeta below} recover the folded zeta functions \eqref{eqn:folded zeta}. For instance, the quiver in Example~\ref{ex:a2} covers the following quiver:
		
		\[
		\begin{tikzpicture}[
			>=Stealth,
			vertex/.style={circle, fill=black, inner sep=1.5pt},
			lab/.style={below=3pt, font=\small}
			]
			
			\node[vertex, label={[lab]$\bar{1}$}] (1) at (0,0) {};
			
			\draw[->] (1) to[out=315, in=45, looseness=20] node[right] {$q^2$} (1);
			\draw[->] (1) to[out=135, in=225, looseness=20] node[left] {$-q^{-1}$} (1);
		\end{tikzpicture}
		\]
		
		\noindent The corresponding lower zeta function is $\ozeta_{\bar{1}\bar{1}}(x) \sim \frac{(1-xq^2)(1+xq^{-1})}{1-x}$, matching \eqref{eqn:example A2}.
		
	\end{remark}
	
	\medskip
	
	\subsection{Covering shuffle algebras}
	\label{sub:covering shuffle algebras}
	
	Just as the big and small shuffle algebras $\CV^\pm$ and $\CS^\pm$ were defined in Subsections~\ref{sub:def shuf}-\ref{sub:upsilon} with respect to the upper zeta functions of \eqref{eqn:zeta above}, we define the big and small shuffle algebras $\oCV^\pm$ and $\oCS^\pm$ with respect to the lower zeta functions of \eqref{eqn:zeta below}. In particular, for any $\bn \in \nn$ we may define
	\[
	\fold(\bn) = \obn \in \onn
	\]
	to be the vector of non-negative integers $\on_{\oi} = \sum_{i \in \fold^{-1}(\oi)} n_i$. Explicitly, we have
	\begin{equation}
		\label{eqn:v above}
		\CV^+ = \CV^- = \bigoplus_{\bn \in \nn} \BK \left[z_{i1},z_{i1}^{- 1},\dots,z_{in_i}, z_{in_i}^{-1} \right]_{i \in I}^{\fS_{\bn}}
	\end{equation}
	and 
	\begin{equation}
		\label{eqn:v below}
		\oCV^+ = \oCV^- = \bigoplus_{\obn \in \onn} \BK \left[z_{\oi 1},z_{\oi 1}^{-1},\dots,z_{\oi \on_{\oi}},z_{\oi \on_{\oi}}^{-1} \right]_{\oi \in \oI}^{\fS_{\obn}}
	\end{equation}
	Note that the invariants which appear in \eqref{eqn:v above} are taken with respect to a smaller permutation group than the invariants which appear in \eqref{eqn:v below}. We then obtain a linear map (which does not induce a shuffle algebra homomorphism, due to the difference between the functions $\zeta$ and $\ozeta$) for all $\fold(\bn) = \obn$:
	\begin{equation}
		\label{eqn:iota}
		\iota_{\obn,\bn} : \oCV_{\pm \obn} \hookrightarrow \CV_{\pm \bn}
	\end{equation}
	by mapping symmetric Laurent polynomials according to the rule
	\begin{equation}
		\label{eqn:variable shift}
		F(z_{\oi 1},\dots,z_{\oi n_{\oi}})_{\oi \in \oI} \mapsto F\left(\frac {z_{i 1}}{\omega_i} ,\dots, \frac {z_{i n_{i}}}{\omega_i}\right)_{i \in I}
	\end{equation}

	\medskip 
	
	\begin{proposition}
		\label{prop:covering shuffle}
		
		The map \eqref{eqn:iota} respects the small shuffle algebras, i.e. maps
		\[
		\oCS_{\pm \obn} \hookrightarrow \CS_{\pm \bn}
		\]
		for all $\rho(\bn) = \obn$. 
		
	\end{proposition}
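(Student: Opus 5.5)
The plan is to argue by induction on $|\obn|$. The input is that $\oCS^\pm$ is by definition generated by the degree-one elements $z_{\oi 1}^d$, so it suffices to control $\iota$ on a shuffle product $\bar F * z_{\oi 1}^d$ with $\bar F \in \oCS$. I will treat the case $\mp=-$; the case $+$ is identical with the opposite product. For $|\obn| = 1$ the statement is trivial, since $\iota(z_{\oi 1}^d) = \omega_i^{-d} z_{i1}^d$ with $\fold(i) = \oi$, and this is a generator of $\CS$.

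For the induction step, I expand the lower symmetrization $\eSym_{\fS_{\obn}}$ in $\bar F * z_{\oi 1}^d$ according to which upper color $i \in \fold^{-1}(\oi)$ the new variable receives once we substitute \eqref{eqn:variable shift}. Writing $z_c$ for the old variables (with upper colors $j_c$), each factor becomes
\[
\ozeta_{\fold(i)\fold(j_c)}\Big(\tfrac{z_{\text{new}}\,\omega_{j_c}}{\omega_i\,z_c}\Big)=\zeta_{i j_c}\Big(\tfrac{z_{\text{new}}}{z_c}\Big)\prod_{i'\in\fold^{-1}(\oi),\,i'\neq i}\zeta_{i' j_c}\Big(\tfrac{\omega_{i'}z_{\text{new}}}{\omega_i z_c}\Big).
\]
This uses the defining normalization $\ozeta_{\oi\oj}(x)=\prod_{i'}\zeta_{i'j_0}(\omega_{i'}x)$ together with the relation $\ozeta_{ij}(x)=\ozeta_{i'j}(\frac{\omega_{i'}}{\omega_i}x)$. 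So, for each $i$, the $i$-th summand of $\iota(\bar F * z^d_{\oi 1})$ is an upper-type symmetrization of
\[
\iota(\bar F)\cdot \omega_i^{-d}z_{\text{new}}^d\cdot\prod_c \zeta_{ij_c}(z_{\text{new}}/z_c)\cdot X_i(z_{\text{new}};z_c),
\]
where $X_i$ denotes the product of the "extra" factors over $i' \neq i$.

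Next I expand $X_i$ in monomials in $z_{\text{new}}$. For each power $z_{\text{new}}^k$, the coefficient is a Laurent polynomial in the old variables which is symmetric under $\fS_{\bn - \bs^i}$, because the product runs over all old variables of each color. The summand then becomes $\sum_k \big(\iota(\bar F)\cdot P_k\big) * z_{i1}^{d+k}$, up to constants, for such symmetric polynomials $P_k$. By induction, $\iota(\bar F)\in\CS$. I then use that $\CS$ is stable under multiplication by color-symmetric Laurent polynomials. To justify this, note that multiplication by a color-wise power sum $\sum_a z_{ja}^k$ implements the action of the Cartan currents, as a commutator with $\ph_{j,k}$, and this action preserves $\CS$ since it is a derivation that preserves the generators up to scalars; moreover, power sums of both signs generate all symmetric Laurent polynomials over characteristic $0$. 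Given this, each $(\iota(\bar F)P_k) * z_{i1}^{d+k}$ lies in $\CS$, and hence so does the whole sum.

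The main obstacle is that $X_i$ need not be a Laurent polynomial. If an old variable $z_c$ has upper color $j_c=i'\in\fold^{-1}(\oi)$ with $i'\ne i$, then the factor $\zeta_{i'i'}$ carries the denominator $(1-\omega_{i'}z_{\text{new}}/(\omega_iz_c))^{-1}$. This is exactly the lower diagonal pole at $z_{\text{new}}/\omega_i = z_c/\omega_{i'}$, which in the lower algebra cancels only after symmetrizing over all of $\fS_{\obn}$, i.e. after summing over the choices of $i$. To handle it, I will first multiply and divide by $\prod_{c:\,j_c\in\fold^{-1}(\oi)}(z_{\text{new}}/\omega_i - z_c/\omega_{j_c})$, which is symmetric in the appropriate sense. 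I then group the summands over $i$ in pairs, $(i, c)$ against the term in which the roles of $i$ and $j_c$ are exchanged, and show that the residues cancel; the covering assumption \eqref{eqn:assumption} guarantees that the remaining numerators match. This reduces the summed expression to the polynomial situation of the previous paragraph. If the pairing cancellation proves too delicate, my fallback is to characterize $\CS$ via the pairing of Theorem~\ref{thm:pairings} and the wheel-type vanishing conditions, and to check that these conditions are transported by the substitution \eqref{eqn:variable shift}.
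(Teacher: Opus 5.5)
Apart from the induction, the first part of your argument is the paper's proof, which the paper does in one step. It writes an element of $\oCS_{-\obn}$ as $\text{Sym}_{\obn}\big[\bar z^{\mathbf d}\prod_{a<b}\ozeta_{\oi_b\oi_a}(\bar z_b/\bar z_a)\big]$ and splits the symmetrization into $\fS_{\bn}$-orbits after the substitution $z_{\oi a}\mapsto z_{ia}/\omega_i$. It then invokes ``$\zeta_{ij}(x)$ divides $\ozeta_{\oi\oj}(x\omega_j/\omega_i)$'' to absorb the cofactor into the monomial.

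You do not need Cartan currents for stability under $\fS_{\bn}$-symmetric multipliers, since $P\cdot\text{Sym}_{\bn}[z^{\mathbf d}\prod\zeta]=\text{Sym}_{\bn}[Pz^{\mathbf d}\prod\zeta]$ directly. The route through $[\ph_{j,k},\cdot\,]$ would not isolate single-colour power sums for arbitrary zeta functions anyway.

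The obstacle you isolate is genuine, and it is exactly what the paper's divisibility statement passes over. For $i\neq j$ with $\fold(i)=\fold(j)$, the cofactor $\ozeta_{\oi\oi}(x\omega_j/\omega_i)/\zeta_{ij}(x)$ contains $\zeta_{jj}(\frac{\omega_j}{\omega_i}x)$, so it is not a Laurent polynomial.

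Your last paragraph does not resolve this. Cancelling residues between the summand for $i$ and the one for $j_c$ only shows that $\iota(\bar F*z^d_{\oi 1})$ has no poles on the cross-colour diagonals. That is, it shows the expression is a Laurent polynomial, which was automatic. Membership in $\CS_{-\bn}$ is a much finer property. The cancelling polar parts live in summands where the new variable has different colours and the remaining variables form different sets. So nothing recombines them into $\sum_k(\iota(\bar F)P_k)*z_{i1}^{d+k}$ with Laurent $P_k$, and ``the remaining numerators match'' does not supply such a rewriting.

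In fact no argument based on pole cancellation alone can work, because at the stated generality the inclusion can fail. Take $\oQ$ with one vertex $\bar{1}$ and one loop $\oalpha$ with $q_{\oalpha}=1$. Cover it by the $2$-cycle $\alpha:2\to 1$, $\beta:1\to 2$, with $\omega_1=1$ and $\omega_2=\omega$ arbitrary.

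Then $q_\alpha=\omega^{-1}$ and $q_\beta=\omega$, so $\zeta_{21}(x)\sim 1-x\omega^{-1}$, $\zeta_{12}(x)\sim 1-x\omega$ and $\zeta_{11}(x)\sim (1-x)^{-1}$. Hence $\ozeta_{\bar{1}\bar{1}}(x)=\zeta_{11}(x)\zeta_{21}(\omega x)$ is a monomial $cx^k$. It follows that $f_{\bar{1},k}*f_{\bar{1},-k}$ is a nonzero constant, so $1\in\oCS_{-2}$ and $\iota(1)=1$.

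However, $\CS_{-(1,1)}$ is spanned by $z_{11}^a z_{21}^b\,\zeta_{21}(z_{21}/z_{11})$ and $z_{11}^a z_{21}^b\,\zeta_{12}(z_{11}/z_{21})$. All of these vanish on $z_{21}=\omega z_{11}$, which is precisely where your factor $X_i$ has its pole. So $\iota(1)\notin\CS_{-(1,1)}$.

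A proof therefore has to use some intrinsic description of $\CS^\pm$ beyond polynomiality, together with a nondegeneracy hypothesis that rules out such examples. Your fallback via wheel conditions points in that direction, but no wheel-condition description of $\CS^\pm$ is available for arbitrary quivers. Checking that $\iota$ transports such conditions across the cross-colour diagonals is exactly the missing step.
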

	
	\medskip 
	
	\begin{proof} Without loss of generality, we will prove the case $\pm = -$. By definition, any element of $\CS_{-\bn}$ is a linear combination of 
		\begin{equation}
			\label{eqn:spherical}
			f_{i_1,d_1} * \cdots * f_{i_n,d_n} = \text{Sym}_{\bn}\left[z_{i_1\bullet_1}^{d_1} \cdots z_{i_n\bullet_n}^{d_n} \prod_{1\leq a < b \leq n} \zeta_{i_bi_a} \left(\frac {z_{i_b \bullet_b}}{z_{i_a \bullet_a}} \right) \right] 
		\end{equation}
		as $i_1,\dots,i_n \in I$, $d_1,\dots,d_n \in \BZ$ (in the formula above, $\bullet_1,\dots,\bullet_n$ denote the minimal positive integers with the property that $\bullet_a < \bullet_b$ for all $a < b$ such that $i_a = i_b$). Similarly, any element of $\oCS_{-\obn}$ is a linear combination of 
		\begin{equation}
			\label{eqn:spherical 2}
			f_{\oi_1,d_1} * \cdots * f_{\oi_n,d_n} = \text{Sym}_{\obn} \left[z_{\oi_1\bullet_1}^{d_1} \cdots z_{\oi_n\bullet_n}^{d_n} \prod_{1\leq a < b \leq n} \ozeta_{\oi_b\oi_a} \left(\frac {z_{\oi_b \bullet_b}}{z_{\oi_a \bullet_a}} \right) \right] 
		\end{equation}
		Because 
		\[
		\zeta_{ij}\left( x \right)  \text{ divides }\ozeta_{\oi\oj} \left(\frac {x \omega_j}{\omega_i}\right)
		\]
		and the symmetrization in \eqref{eqn:spherical 2} is defined with respect to a bigger group than in \eqref{eqn:spherical}, we conclude that any element of $\oCS_{-\obn}$ is also in $\CS_{-\bn}$, upon performing the variable shift \eqref{eqn:variable shift}. \end{proof}
	
	\medskip
	
	\subsection{A technical statement}
	\label{sub:technical}
	
	We will need an analogue of Proposition~\ref{prop:covering shuffle} in the setup of Subsections~\ref{sub:quivers automorphism} and \ref{sub:folding shuffle algebras}, whose proof we leave to the reader. In what follows, we denote by $i_0$ the distinguished preimage of any $\oi \in \oI$, and write $i_k = \sigma^k(i_0), \forall k$.
	
	\medskip 
	
	\begin{proposition}
		\label{prop:folding shuffle}
		
		In the generality of folding quivers with automorphisms (see Subsection~\ref{sub:folding shuffle algebras}), suppose $\rho(\bn) = \obn$ where $\on_{\oi} = n_{i_0}+\dots+n_{i_{\frac m{m_{\oi}}-1}}$. The map
		\begin{equation}
			\label{eqn:iota folding}
			\iota_{\obn,\bn} : \oCV_{\pm \obn} \hookrightarrow \CV_{\pm \bn}
		\end{equation}
		that sends a symmetric Laurent polynomial $F(z_{\oi 1},\dots,z_{\oi \on_{\oi}})_{\oi \in \oI}$ to
		\[
		F\left(z_{i_0 1},\dots,z_{i_0n_{i_0}}, \frac {z_{i_1 1}}{\omega},\dots, \frac {z_{i_1n_{i_1}}}{\omega}, \dots, \frac {z_{i_{\frac m{m_{\oi}}-1}1}}{\omega^{\frac m{m_{\oi}}-1}},\dots, \frac {z_{i_{\frac m{m_{\oi}}-1}n_{i_{\frac m{m_{\oi}}-1}}}}{\omega^{\frac m{m_{\oi}}-1}} \right)_{i_0 \in J}
		\]
		respects the small shuffle algebras, i.e. maps
		\[
		\oCS_{\pm \obn} \hookrightarrow \CS_{\pm \bn}
		\]
		
	\end{proposition}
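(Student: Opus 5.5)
We mimic the proof of Proposition~\ref{prop:covering shuffle}. Treat $\pm=-$; the other case is symmetric. By definition, $\oCS_{-\obn}$ is spanned by the folded shuffle products
\[
f_{\oi_1,d_1} * \cdots * f_{\oi_{\on},d_{\on}} = \text{Sym}_{\obn}\left[z_{\oi_1\bullet_1}^{d_1}\cdots z_{\oi_{\on}\bullet_{\on}}^{d_{\on}} \prod_{1\leq a<b\leq \on} \ozeta_{\oi_b\oi_a}\left(\frac{z_{\oi_b\bullet_b}}{z_{\oi_a\bullet_a}}\right)\right], \qquad d_a \in m_{\oi_a}\BZ .
\]
The goal is to show that $\iota_{\obn,\bn}$ sends each such element to a linear combination of the unfolded products $f_{i_1,d_1} * \cdots * f_{i_n,d_n}$ of \eqref{eqn:spherical}.

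First, we decompose the symmetrization over $\fS_{\obn}$ into cosets of $\fS_{\bn}$. For each $\oi$, the $\on_{\oi}$ variables are distributed among the blocks $i_0,\dots,i_{\frac m{m_{\oi}}-1}$, with $n_{i_k}$ variables in block $k$. Thus $\text{Sym}_{\obn}$ is the sum over all ways of assigning a color $i_{k_a}\in\fold^{-1}(\oi_a)$ to each letter $a$, subject to these multiplicities, of an $\text{Sym}_{\bn}$. After the substitution $z_{\oi a}\mapsto z_{i_k a}\omega^{-k}$ of \eqref{eqn:iota folding}, each summand is $\text{Sym}_{\bn}$ applied to
\[
\prod_a z_{i_{k_a}\bullet_a}^{d_a}\omega^{-k_ad_a}\prod_{a<b}\ozeta_{\oi_b\oi_a}\left(\frac{z_{i_{k_b}\bullet_b}\,\omega^{k_a}}{z_{i_{k_a}\bullet_a}\,\omega^{k_b}}\right).
\]
Here the sign $\omega^{-k_ad_a}$ is harmless because $d_a\in m_{\oi_a}\BZ$. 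This matches the convention \eqref{eqn:formula 4}.

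The key step is a divisibility statement in the upper variables. Write $\ozeta_{\oi\oj}(x)=\ozeta_{i_0j_0}(x)=\prod_{k}\zeta_{\sigma^k(i_0)j_0}(\omega^kx)$, and use its invariance under $x\mapsto x\omega^{m/m_{\oi}}$ and $x\mapsto x\omega^{m/m_{\oj}}$. Together with $\sigma$-invariance of $\zeta$, i.e. $\zeta_{\sigma(i)\sigma(j)}=\zeta_{ij}$, this gives the following: for any $i\in\fold^{-1}(\oi)$ and $j\in\fold^{-1}(\oj)$ with $i=\sigma^{k}(i_0)$ and $j=\sigma^{l}(j_0)$,
\[
\ozeta_{\oj\oi}\left(\frac{y\,\omega^{k}}{x\,\omega^{l}}\right) = \zeta_{ji}\left(\frac yx\right)\cdot R_{ji}\left(\frac yx\right),
\]
where $R_{ji}$ is the product of the remaining factors $\zeta_{\sigma^{r}(j)i}(\omega^{r}y/x)$ with $r\neq 0$. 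The folding identity \eqref{eqn:folded zeta} must be applied with $(j,i)$ in place of $(\oj,\oi)$ via the identifications \eqref{eqn:formula 1}--\eqref{eqn:formula 2}.

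The main obstacle is that $R_{ji}$ may have denominators $(1-\omega^r y/x)$ when $\sigma^r(j)=i$. Two situations need separate care.

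\begin{itemize}[leftmargin=*]

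\item In the free case, these denominators become genuine poles between variables of different unfolded colors. This is exactly the situation handled in Proposition~\ref{prop:covering shuffle}, via Remark~\ref{rem:intersection}.

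\item When $m_{\oi}>1$, the substitution \eqref{eqn:iota folding} identifies $z_{i_k a}$ for $k$ modulo $m/m_{\oi}$ only. The factor $(1-x^{m_{\oi}})^{-1}$ of \eqref{eqn:folded zeta 3} must then be matched against $\prod_k(1-\omega^{k}x)^{-1}$. This product contains the diagonal factor $(1-x)^{-1}$ of $\zeta_{ii}$, while the other factors pair variables of distinct unfolded colors.

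\end{itemize}

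In both situations, the plan is the same. We absorb the extra factors $R$ into a Laurent polynomial prefactor and argue that the total expression is a symmetrization, over the larger group, of $\prod_{a<b}\zeta_{i_bi_a}(z_b/z_a)$ times a Laurent polynomial. Poles between different colors cancel pairwise under the coset sum, by the same footnote argument as in Definition~\ref{def:big shuf}.

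To finish, we show that $\text{Sym}_{\bn}\big[P\cdot\prod_{a<b}\zeta_{i_bi_a}\big]$ lies in $\CS_{-\bn}$ for any Laurent polynomial $P$. This holds because $\CS^-$ is spanned by such expressions: expand $P$ into monomials, each of which yields exactly \eqref{eqn:spherical}.

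If the cancellation of cross-color poles proves delicate, the fallback is to argue dually. Via Theorem~\ref{thm:pairings} and the characterization of $\CS^-$ as the annihilator of $\text{Ker }\tUpsilon^+$, it suffices to show that $\langle \text{Ker }\tUpsilon^+,\iota(\oCS^-)\rangle=0$. This can be obtained by pushing unfolded relations forward to folded ones through $e_{\sigma^k(i_0),d}=e_{i_0,d}\omega^{dk}$.
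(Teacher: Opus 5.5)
Your strategy is the one the paper intends. The paper leaves this proof to the reader as the analogue of Proposition~\ref{prop:covering shuffle}, whose argument is exactly ``spanning set, $\zeta$ divides $\ozeta$, larger symmetrization group''. Your coset decomposition and the factorization $\ozeta_{\oj\oi}\Big(\frac{y\omega^k}{x\omega^l}\Big)=\zeta_{ji}\Big(\frac yx\Big)R_{ji}\Big(\frac yx\Big)$ are correct. You are also right that $R_{ji}$ has denominators whenever $\sigma^r(j)=i$ for some $r\neq 0$; the one-line argument for Proposition~\ref{prop:covering shuffle} passes over the same point when $\oi_a=\oi_b$ but $i_a\neq i_b$.

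The gap is in how you dispose of these denominators. A rational $R_{ji}$ cannot be absorbed into a Laurent prefactor. The cross-colour poles at $z_b=\omega^{-r}z_a$ also do not cancel inside a single $\fS_{\bn}$-symmetrization: the footnote to Definition~\ref{def:big shuf} only handles poles along $z_a=z_b$ between variables of the same colour, via antisymmetry of $z_a-z_b$. Take Example~\ref{ex:a2} with $\obn=(2)$, $\bn=(1,1)$, $u=z_{11}$, $v=z_{21}$. The group $\fS_{\bn}$ is trivial and, up to the monomials hidden in $\sim$,
\[
\iota\big(f_{\bar 1,d_1}*f_{\bar 1,d_2}\big) = u^{d_1}(-v)^{d_2}\,\zeta_{21}\Big(\frac vu\Big)\frac{u+vq^2}{u+v} + (-v)^{d_1}u^{d_2}\,\zeta_{12}\Big(\frac uv\Big)\frac{v+uq^2}{u+v},
\]
where each summand has a genuine pole at $v=-u$. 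The poles cancel only between different colourings, i.e.\ between terms carrying different products $\prod\zeta_{i_bi_a}$ (here $\zeta_{21}(v/u)$ versus $\zeta_{12}(u/v)$). So what survives the cancellation is not of the form $\text{Sym}_{\bn}\big[P\cdot\prod_{a<b}\zeta_{i_bi_a}\big]$ with $P$ Laurent, and your last step has nothing to act on. (In this small degree $\CS_{-(1,1)}=\CV_{-(1,1)}$, so the example only illustrates the mechanism.) Pole cancellation just re-proves $\iota(F)\in\CV_{-\bn}$, which is automatic; membership in the span $\CS_{-\bn}$ is the entire content of the proposition.

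The dual fallback does not close the gap either, for two reasons.
\begin{itemize}
\item Theorem~\ref{thm:pairings} gives only $\CS_{-\bn}\subseteq\{G:\langle \text{Ker}\,\tUpsilon^+,G\rangle=0\}$ together with non-degeneracy, not the reverse inclusion. The graded pieces are infinite-dimensional, which is why the proof of Proposition~\ref{prop:twisted wheel strongly} needs the combinatorics of words.
\item The assignment $e_{\sigma^k(i_0),d}\mapsto\omega^{dk}e_{i_0,d}$ does not give a map $\tUUp\to\bar{\tUU}^+$ intertwining the pairings. The folded quadratic relations carry the extra factors $R$ and do not imply the unfolded ones, and the two pairing integrands differ by $\prod R$.
\end{itemize}

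What is missing is a description of $\CS_{-\bn}$ by conditions that $\iota(F)$ visibly inherits. In the strongly symmetrizable Kac--Moody case such a description exists and gives a short proof:
\begin{itemize}
\item $\CS_{-\bn}$ is cut out by the wheel conditions \eqref{eqn:wheel strongly} (Subsection~\ref{sub:wheel conditions});
\item every $F\in\oCS_{-\obn}$ satisfies \eqref{eqn:twisted wheel strongly} by Proposition~\ref{prop:twisted wheel strongly};
\item under the conventions \eqref{eqn:formula 1}--\eqref{eqn:formula 2}, the map $\iota_{\obn,\bn}$ turns each twisted wheel condition into the corresponding untwisted one.
\end{itemize}
For an arbitrary quiver you would need an analogous characterization of the small shuffle algebra.
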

	
	\medskip 
	
	\noindent By the very definition \eqref{eqn:v folded}, while elements of $\oCV^\pm$ are Laurent polynomials in $z_{\oi a}^{m_{\oi}}$, elements of $\CV^\pm$ are in general merely Laurent polynomials in $z_{ia}$. Thus, another way to think about the injection \eqref{eqn:iota folding} is to consider the finite group
	\begin{equation}
		\label{eqn:finite group}
		G_{\bm,\bn} = \prod_{i \in I} (\BZ/m_i \BZ)^{\times n_i}
	\end{equation}
	that acts on the variables $\{z_{i1},\dots,z_{in_i}\}_{i \in I}$ by independently multiplying them with $m_i$-th roots of unity. It is easy to see that
	\[
	\oCV_{\pm \obn} \cong \BK\left[z_{i1},z_{i1}^{-1},\dots,z_{in_i},z_{in_i}^{-1}\right]_{i \in I}^{G_{\bm,\bn}\rtimes \fS_{\obn}}
	\]
	thus realizing the map \eqref{eqn:iota folding} as the embedding of the set of Laurent polynomials which are invariant with respect to the bigger group $G_{\bm,\bn}\rtimes \fS_{\obn}$ into the set of Laurent polynomials which are invariant with respect to the smaller group $\fS_{\bn}$. 
	
	\bigskip
	
	\section{Simple modules}
	\label{sec:modules}
	
	\medskip
	
	\noindent We will now recall the simple modules $L(\bpsi)$ defined in \cite{HN, N Cat} for arbitrary quantum loop algebras, generalizing the classical constructions of \cite{CP, CP tw, H Shifted, HJ}. We provide the natural generalization of this construction to the setting of folded quivers with automorphisms and covering quivers, thus proving Theorems~\ref{thm:main folding intro} and \ref{thm:main covering intro}.
	
	\medskip 
	
	\begin{itemize}[leftmargin=*]
		
		\item In Subsections~\ref{sub:simple modules}-\ref{sub:q-characters}, we recall the general theory of simple modules, $q$-characters and their shuffle algebra realizations.
		
		\medskip 
		
		\item In Subsections~\ref{sub:simple folding}-\ref{sub:q-characters folding}, we adapt the above discussion to describe $q$-characters in the setting of folding quivers with automorphisms, and we state our main Theorem~\ref{thm:main folding intro} in Subsection~\ref{sub:folding q-characters} (labeled as Theorem~\ref{thm:main folding} below).
		
		\medskip 
		
		\item In Subsection~\ref{sub:covering q-characters}, we describe $q$-characters in the setting of coverings of quivers, and we state and prove Theorem~\ref{thm:main covering intro} (labeled as Theorem~\ref{thm:main covering} below).
		
		\medskip 
		
		\item In Subsection~\ref{sub:analogies}, we modify the proof of Theorem~\ref{thm:main covering intro} to obtain Theorem~\ref{thm:main folding intro}.
		
	\end{itemize}
	
	\medskip 
	
	\noindent To better navigate the inherent abuse of notation, we note that bar notations such as $\oI, \ozeta, \oCS, \oCV, \oUU$ refer to the setting of folding quivers with automorphisms in Subsections~\ref{sub:simple folding}-\ref{sub:folding q-characters} and \ref{sub:analogies}, and to the setting of coverings of quivers in Subsection~\ref{sub:covering q-characters}. Our approach of using the same notation for two different settings will be vindicated in Subsection~\ref{sub:analogies}, when we will adapt much of the proof of Theorem~\ref{thm:main covering} to obtain a proof of Theorem~\ref{thm:main folding} (this analogy would be much more difficult to follow if the two theorems used different notation). 
	
	\medskip 
	
	\subsection{Loop weights and simple modules}
	\label{sub:simple modules}
	
	Let us consider the data $(I,\BK,\zeta_{ij})$ as in Subsection~\ref{sub:zeta}, and assume that the zeta functions are balanced in the sense of \eqref{eqn:limit}. A \emph{loop weight} will refer to an $I$-tuple of power series
	\begin{equation}
		\label{eqn:loop weight}
		\bpsi = \left(\psi_i(z) = \sum_{d=0}^{\infty} \frac {\psi_{i,d}}{z^d} \in \BK[[z^{-1}]]^\times \right)_{i \in I}
	\end{equation}
	that we henceforth assume are expansions of completely factored rational functions:
	\begin{equation}
		\label{eqn:factored}
		\psi_i(z) = \gamma_i \frac {\left(1 - \frac {a_{i1}}z \right) \cdots \left(1 - \frac {a_{ip_i}}z \right)}{\left(1 - \frac {b_{i1}}z \right) \cdots \left(1 - \frac {b_{ir_i}}z \right)}
	\end{equation}
	for all $i \in I$, and various $a_{i1},\dots,a_{ip_i},b_{i1},\dots,b_{ir_i},\gamma_i \in \BK^*$. Let $\bord \bpsi = (p_i-r_i)_{i \in I} \in \zz$ encode the orders of the poles of the above rational functions at $z = 0$.
	
	\medskip 
	
	\begin{definition}
		\label{def:shifted}
		
		Given $\emph{\textbf{r}} = (r_i)_{i \in I} \in \zz$, the shifted quantum loop algebra is
		\[
		\UU^{\emph{\textbf{r}}} = \BK \Big \langle e_{i,d}, f_{i,d}, \ph^\pm_{i,d'} \Big \rangle_{i \in I, d \in \BZ, d' \geq 0} \Big/ \Big( \text{\eqref{eqn:rel quantum 1}-\eqref{eqn:rel quantum 7} and \eqref{eqn:rel quantum 9 shifted}} \Big)
		\]
		in which we replace relation \eqref{eqn:rel quantum 9} by the following shifted version:
		\begin{equation}
			\label{eqn:rel quantum 9 shifted}
			\Big[e_{i}(x), f_{j}(y)\Big] = \delta_{ij} \delta \left(\frac xy \right) \Big(y^{-r_j} \ph^-_j(y) - \ph^+_i(x)\Big)
		\end{equation} 
		While we still assume that $\ph_{i,0}^+$ and $\ph_{i,0}^-$ are invertible in $\UU^{\emph{\textbf{r}}}$, we remove the assumption that their product is equal to 1 from \eqref{eqn:rel quantum 4} (see \cite[Section~3.1]{H Shifted}).
		
	\end{definition}
	
	\medskip 
	
	\noindent With the zeta functions associated to a finite type Lie algebra (see Subsection~\ref{sub:Kac-Moody}), the construction above is known as a shifted quantum affine algebra.
	
	\medskip
	
	\begin{theorem}
		\label{thm:simple}
		
		(\cite{HN, N Cat}) For any loop weight $\bpsi$, there exists a simple module
		\begin{equation}
			\label{eqn:simple shift}
			\UU^{\emph{\bord} \bpsi} \curvearrowright L(\bpsi) 
		\end{equation}
		generated by a single vector $\vac$ modulo the following relations
		\begin{align*}
			&\ph_i^+(z) \cdot \vac = \psi_i(z) \vac \qquad \text{ expanded near } z \sim \infty \\
			&\ph_i^-(z) \cdot \vac = \psi_i(z) \vac \qquad \text{ expanded near } z \sim 0 \\
			&e_{i}(z) \cdot \vac = 0
		\end{align*}
		for all $i \in I$. As the shift of $\UU$ is dictated by $\bpsi$, we henceforth abbreviate \eqref{eqn:simple shift} by 
		\[
		\UU^{\esh} \curvearrowright L(\bpsi)
		\]
		Note that when each constituent rational function $\psi_i(z)$ has finite limits at 0 and $\infty$ which multiply to 1, a $\UU^{\esh}$-module is the same as a $\UU$-module. 
		
	\end{theorem}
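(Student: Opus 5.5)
Since this result is attributed to \cite{HN, N Cat}, the plan is to build $L(\bpsi)$ as the quotient of a Verma-type module by its maximal proper submodule, using the triangular decomposition $\UU \cong \UUp \otimes \UUo \otimes \UUm$ of Subsection~\ref{sub:double}. That decomposition persists in the shifted algebra $\UU^{\textbf{r}}$, because relation \eqref{eqn:rel quantum 9 shifted} only changes the Cartan part produced when an $e$ is commuted past an $f$. The first step is to check that the assignments $\ph^\pm_i(z) \mapsto \psi_i(z)$ define a one-dimensional module $\BK_{\bpsi}$ over $\UUo \otimes \UUp$, with $\UUp$ acting by zero. The defining relations of $\UUg$ in \eqref{eqn:uug} are homogeneous in $e$, so they hold trivially when $e$ acts by $0$. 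The $\ph$'s commute by \eqref{eqn:rel quantum 4}, and $\ph^+_{i,0}, \ph^-_{i,0}$ act invertibly because $\gamma_i \neq 0$.

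The one real compatibility issue concerns $\ph^-_i$. It acts through the expansion of $\psi_i(z)$ at $z \sim 0$, and by \eqref{eqn:factored} this expansion has leading term proportional to $z^{p_i - r_i}$, not a power series in $z$ with invertible constant term. This is exactly why the algebra has to be shifted by $\textbf{r} = \bord \bpsi$. I would rename $z^{-r_i}\ph^-_i(z)$ (or absorb the shift, following \cite[Section~3.1]{H Shifted}) so that the eigenvalue of the genuine generators $\ph^-_{i,d'}$ is a bona fide power series in $z$. Relation \eqref{eqn:rel quantum 9 shifted} is what makes this consistent.

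Next I would form the induced module $M(\bpsi) = \UU^{\textbf{r}} \otimes_{\UUo \otimes \UUp} \BK_{\bpsi} \cong \UUm$, which is graded by $-\nn$. The $\ph$'s act on each graded piece $\UUm_{-\bn}$ by the commutation relations \eqref{eqn:rel quantum 7}. Any proper submodule has zero intersection with the degree-$0$ piece $\BK\vac$, so the sum of all proper submodules is still proper. Quotienting by it gives a simple module $L(\bpsi)$, generated by $\vac$, that satisfies the three stated relations. To show it is "generated modulo these relations", observe that any module generated by a vector subject to these relations is a quotient of $M(\bpsi)$, and $L(\bpsi)$ is the unique simple such quotient.

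The main obstacle is checking that the action on $M(\bpsi)$ is well defined, that is, that \eqref{eqn:rel quantum 9 shifted} and the Drinfeld double relation \eqref{eqn:dd} are compatible with the induced module. Put differently, one needs the PBW-type factorization $\UU^{\textbf{r}} \cong \UUp \otimes \UUo \otimes \UUm$ for the shifted algebra. I would obtain it from the realization of $\UU^{\textbf{r}}$ as a twisted Drinfeld double of $\UUg$ and $\UUl$ in which the pairing of Cartan elements is rescaled by $y^{-r_j}$. Alternatively, one can use the shuffle description: the submodule to be quotiented out is the radical of the pairing $\langle \cdot, \cdot \rangle$ of Theorem~\ref{thm:pairings}, twisted by $\bpsi$, as in \cite{N Cat}.

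The final remark needs only a short argument. If $\psi_i(0)\psi_i(\infty) = 1$ and both limits are finite, then $p_i = r_i$, so the shift is zero. Also $\ph^+_{i,0}\ph^-_{i,0}$ acts on $\vac$ by $1$, and since it is central it acts by $1$ on the whole simple module. Hence the module factors through $\UU$.
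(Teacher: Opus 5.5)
Your overall strategy, a Verma module modulo the kernel of a Shapovalov-type form, is the one behind the construction the paper quotes. There the Verma module is realized on $\CS^-$, and $J(\bpsi)_{\bn}$ in \eqref{eqn:j n} is exactly the space of degree $-\bn$ vectors that no product of $e$'s sends to a nonzero multiple of $\vac$.

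\textbf{The gap.} The step producing the simple quotient fails as written. From ``every proper submodule meets $\BK\vac$ trivially'' it does not follow that the sum of all proper submodules is proper. That conclusion needs every submodule of $M(\bpsi)$ to be graded by $-\nn$, and nothing in $\UU^{\mathbf{r}}$ forces this. The operators $\ph^+_{i,0}$ act on $M(\bpsi)_{-\bn}$ by scalars that need not separate degrees: for affine $\fg$ the scalar is $\gamma_i q^{-\sum_j d_{ij}n_j}$, which does not distinguish $\bn$ from $\bn$ shifted by the null root. The higher $\ph^+_{i,d}$ need not act locally finitely on a Verma module, so they do not help either.

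Both this claim and your uniqueness assertion can actually fail. Take one vertex with one loop of parameter $q_\alpha=1$, so that $\zeta_{ii}=1$ and the $\ph$'s are central, and take $\psi(z)=(1-b/z)^{-1}$.
\begin{itemize}
\item Then $M(\bpsi)=\BK[f_c]_{c\in\BZ}\vac$, and $e_a$ acts by the derivation $-b^{a}\sum_c b^{c}\,\partial/\partial f_c$.
\item Multiplication by $k_c=f_c-b^cf_0$ commutes with the whole action.
\item For any scalars $\lambda_c$, the submodule $\sum_{c\neq 0}(k_c-\lambda_c)M(\bpsi)$ is maximal, with quotient $\BK[f_0]\vac$.
\item These quotients are pairwise non-isomorphic simple highest weight modules of weight $\bpsi$, since $k_c$ acts on them by $\lambda_c$.
\item Taken together, these maximal submodules span all of $M(\bpsi)$.
\end{itemize}

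\textbf{The repair.} Work with graded submodules from the start. Set $N=\bigoplus_{\bn}\{v\in M(\bpsi)_{-\bn}:\UUp_{\bn}v=0\}$; this is the largest graded submodule avoiding $\vac$, and in the shuffle realization it is precisely $J(\bpsi)$. Then $M(\bpsi)/N$ is simple by a top-degree argument:
\begin{itemize}
\item Given $\bar v=\sum_{\bn}\bar v_{\bn}\neq 0$, pick $\bn$ with $\bar v_{\bn}\neq 0$ and $\sum_i n_i$ maximal.
\item Pick $x\in\UUp_{\bn}$ with $x\bar v_{\bn}=c\vac$ and $c\neq 0$.
\item Every other nonzero component is sent by $x$ into a degree outside $-\nn$, hence to $0$.
\item So $x\bar v=c\vac$ and $\bar v$ generates the quotient.
\end{itemize}
Zorn's lemma alone would give some simple quotient, but uniqueness only holds among graded modules.

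\textbf{Your main obstacle.} Your first route does not work. The Cartan--Cartan pairing in \eqref{eqn:dd} only enters the cross relations between $\ph^\pm$ and $f,e$, not the relation between $e$ and $f$. Producing $y^{-r_j}\ph^-_j(y)$ in \eqref{eqn:rel quantum 9 shifted} through \eqref{eqn:dd} would require it to be the group-like factor in $\Delta(f_j)$. That is impossible for $r_j\neq 0$, because a group-like series has its leading term in degree $0$. So $\UU^{\mathbf{r}}$ is not a Drinfeld double of bialgebras. Your second route is the one the paper relies on through its citations: realize the Verma module directly on $\CS^-$ by explicit formulas, with the pairing of Theorem~\ref{thm:pairings} guaranteeing that $\mathrm{Ker}\,\tUpsilon^+$ acts by zero.

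\textbf{A sign slip.} Near $z=0$, $\psi_i(z)$ behaves like $z^{r_i-p_i}$, not $z^{p_i-r_i}$ (with $p_i,r_i$ as in \eqref{eqn:factored}). With the correct sign, $\ph^-_i(z)$ acts by the genuine power series $z^{p_i-r_i}\psi_i(z)$. This is exactly what makes the shifted current in \eqref{eqn:rel quantum 9 shifted} act on $\vac$ by $\psi_i(z)$; with your sign the bookkeeping does not close up.
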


	\medskip 
	
	\noindent Explicitly, the simple module $L(\bpsi)$ is constructed via the following formula, which is analogous to the classic construction of irreducible representations of simple Lie algebras as quotients of Verma modules by the kernel of Shapovalov forms:
	\[
	L(\bpsi) = \bigoplus_{\bn \in \nn} L(\bpsi)_{\bn}, \qquad L(\bpsi)_{\bn} = \CS_{-\bn} \Big / J(\bpsi)_{\bn}
	\]
	where 
	\begin{equation}
		\label{eqn:j n}
		\begin{split}
			J(\bpsi)_{\bn} = \Big \{F \in &\CS_{-\bn} \Big|  \forall \bn = \bs^{i_1} + \cdots + \bs^{i_n}, \forall d_1,\dots,d_n \in \BZ, \\
			&\underset{z_n \in \BK^*}{\text{Res}} \cdots \underset{z_1 \in \BK^*}{\text{Res}} \frac {F (z_1,\dots,z_n)z_1^{d_1}\cdots z_n^{d_n}}{\prod_{1\leq a < b \leq n} \zeta_{i_bi_a} \left(\frac {z_b}{z_a} \right)} \prod_{a=1}^n \psi_{i_a}(z_a) = 0  \Big\}
		\end{split}
	\end{equation}
	The notation $\underset{z \in \BK^*}{\text{Res}}$ refers to the sum of residues of a rational function in $z$ over all non-zero field elements, and $F(z_1,\dots,z_n)$ is defined via the substitution \eqref{eqn:relabeling}. 
	
	\medskip 
	
	\subsection{\texorpdfstring{$q$}{q}-characters} 
	\label{sub:q-characters}
	
	As shown in \cite{HN, N Cat}, the generalized eigenspace decomposition of $L(\bpsi)$ with respect to the commuting operators $\ph_{i,0}^+,\ph_{i,1}^+,\ph_{i,2}^+,\dots$ is given by
	\begin{equation}
		\label{eqn:l x}
		L(\bpsi)_{\bn} = \bigoplus_{\bx \in (\BK^*)^{\bn}} L(\bpsi)_{\bx}, \qquad L(\bpsi)_{\bx} = \CS_{-\bn} \Big / J(\bpsi)_{\bx}
	\end{equation}
	for any $\bx = (x_{i1},\dots,x_{in_i})_{i \in I} \in (\BK^*)^{\bn} = \prod_{i \in I} (\BK^*)^{n_i}/\fS_{n_i}$. Above, we set 
	\begin{equation}
		\label{eqn:j x}
		\begin{split}
			J(\bpsi)_{\bx} =& \Big \{F \in \CS_{-\bn} \Big|  \forall x_1,\dots,x_n \text{ ordering of }\bx, \forall \bn = \bs^{i_1} + \cdots + \bs^{i_n},\\
			&\underset{z_n = x_n}{\text{Res}} \cdots \underset{z_1 = x_1}{\text{Res}} \frac {F (z_1,\dots,z_n)(\text{any monomial})}{\prod_{1\leq a < b \leq n} \zeta_{i_bi_a} \left(\frac {z_b}{z_a} \right)} \prod_{a=1}^n \psi_{i_a}(z_a) = 0  \Big\}
		\end{split}
	\end{equation}
	where a sequence $(x_1,\dots,x_n) \subset \BK^*$ is said to be an ordering of $\bx$ if we have an equality of multisets $\{x_a| i_a = i\} = \{x_{i1},\dots,x_{in_i}\}$ for all $i$ (with respect to a given, but arbitrary, sequence $i_1,\dots,i_n \in I$ such that $\bn = \bs^{i_1}+\cdots+\bs^{i_n}$). 
	
	\medskip 
	
	\begin{lemma}
		\label{lem:residues}
		
		For any loop weight $\bpsi$ as in \eqref{eqn:factored} and any $\bx$ as above, we have
		\[
		\dim_{\BK} L(\bpsi)_{\bx} < \infty
		\]
		Moreover, for any $\bn \in \nn$ only finitely many $\bx \in (\BK^*)^{\bn}$ have the property that 
		\[
		L(\bpsi)_{\bx} \neq 0
		\]
		and so $L(\bpsi)_{\bn}$ is also finite-dimensional.
		
	\end{lemma}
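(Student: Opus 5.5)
Both assertions come from rewriting the defining condition of $J(\bpsi)_{\bx}$ as vanishing of finitely many linear functionals on $\CS_{-\bn}$, supported on a finite set of candidate points $\bx$ fixed by the poles of the integrand. We fix $\bn$ and a sequence $i_1,\dots,i_n$ with $\bs^{i_1}+\cdots+\bs^{i_n}=\bn$; since $F$ is color-symmetric, the conditions for other sequences arise from this one by reordering, so finitely many sequences suffice.

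\emph{Step 1 (finitely many supporting $\bx$).} Look at the iterated residue in \eqref{eqn:j x}. At stage $a$, after taking residues in $z_1,\dots,z_{a-1}$ at $x_1,\dots,x_{a-1}$, the integrand is a rational function of $z_a$. Its poles in $\BK^*$ lie among three sources: the poles $b_{i_a,\bullet}$ of $\psi_{i_a}(z_a)$; the zeros of $\zeta_{i_a i_c}(z_a/x_c)$ for $c<a$, namely $z_a\in\{x_c q_\alpha^{-1}\}$ over arrows $\alpha: i_a\to i_c$; and the zeros of $\zeta_{i_b i_a}(z_b/z_a)$ for $b>a$, which involve later variables. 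The numerator factor $F(z_1,\dots,z_n)\cdot(\text{monomial})$ is a Laurent polynomial and adds no poles in $\BK^*$. There are also apparent poles from the factors $(1-z_b/z_a)^{-1}$ for equal colours. These cancel against the zeros of $\zeta$, whose denominator $(1-x)^{\delta_{ij}}$ sits in the numerator of the integrand.

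The issue is the third source: residues are taken first in $z_1$, so the poles in $z_a$ can depend on variables not yet specialized. To avoid this, I would expand the iterated residue in the order $z_n,\dots,z_1$ and use the sum-of-residues (global) version, as in \cite{HN, N Cat}. The conclusion should be that a nonzero residue forces each $x_a$ to lie in
\[
\{b_{i_a,\bullet}\}\cdot\langle q_\alpha\rangle_{\text{paths of length}<n}.
\]
This is a finite set depending only on $\bpsi$ and $\bn$. Hence $J(\bpsi)_{\bx}=\CS_{-\bn}$, i.e.\ $L(\bpsi)_{\bx}=0$, outside a finite set of $\bx$. This is the second claim.

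\emph{Step 2 (finite dimension of each $L(\bpsi)_{\bx}$).} Fix $\bx$ and an ordering. The iterated residue at fixed points $z_a=x_a$ of $F\cdot z^{\bm d}\cdot R$, where $R$ is the fixed rational factor, is a linear combination of finitely many derivatives $\partial^{\bm k}F(\bx)$. The orders $\bm k$ are bounded by the pole orders of $R$ at these points, which are finite. As the monomial exponents vary, these conditions span only finitely many independent functionals. The reason is that the span of
\[
\Big\{F\mapsto \partial^{\bm k}\big(F z^{\bm d}\big)(\bx)\Big\}_{\bm d}
\]
lies in the span of $\{F\mapsto\partial^{\bm k'}F(\bx)\}_{\bm k'\le\bm k}$, a finite-dimensional space. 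Ranging also over the finitely many orderings and colour sequences, $J(\bpsi)_{\bx}$ is the common kernel of finitely many linear functionals on $\CS_{-\bn}$. Its codimension is therefore finite, so $\dim L(\bpsi)_{\bx}<\infty$.

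\emph{Conclusion and main obstacle.} Combining Steps 1 and 2 with \eqref{eqn:l x} shows that $L(\bpsi)_{\bn}$ is a finite direct sum of finite-dimensional spaces. The main obstacle is Step 1: making precise that the iterated residue at points outside the finite set vanishes despite poles depending on later variables. I would handle this by induction on $n$, peeling off $z_n$ first, and by invoking the iterated-residue analysis of \cite{HN, N Cat}.
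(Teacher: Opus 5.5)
Your Step 2 is correct. It is the paper's argument for the first claim in slightly different form: the paper notes that the residues in \eqref{eqn:j x} vanish once $F$ vanishes to high enough order at the point, and your bounded-order derivative functionals plus the Leibniz rule say the same thing.

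The gap is in Step 1, which you do not actually prove. Moreover, the difficulty you identify there is not a real one. In $\underset{z_n=x_n}{\text{Res}}\cdots\underset{z_1=x_1}{\text{Res}}$, the residue in $z_b$ is taken while $z_{b+1},\dots,z_n$ are still indeterminates. That is, it is the residue of an element of $\BK(z_{b+1},\dots,z_n)(z_b)$ at the scalar point $x_b\in\BK^*$. A factor $\zeta_{i_ci_b}(z_c/z_b)$ with $c>b$ vanishes only at $z_b=z_cq_\alpha$, which for indeterminate $z_c$ is not the point $x_b$. So this factor is a unit in the local ring at $z_b=x_b$ and contributes no pole there. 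Its only effect is a pole at $z_c=x_bq_\alpha^{-1}$, which is your ``second source'' for the later variable $z_c$.

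Hence you can argue forward in $b=1,\dots,n$. After the first $b-1$ residues, the only scalar poles in $z_b$ are the poles of $\psi_{i_b}$ and the points $x_aq_\alpha^{-1}$ with $a<b$ and $\alpha:i_b\to i_a$. Earlier residues can raise the orders of these poles through derivatives, but cannot move them. If $x_b$ is none of these points, the $z_b$-residue vanishes identically, and so does the whole iterated residue. This one-line induction is exactly the paper's proof of the second claim. It gives $x_b\in\{\text{poles of }\psi_j\}\cdot q_{\alpha_1}^{-1}\cdots q_{\alpha_k}^{-1}$ over paths $i_b\to\cdots\to j$ of length $k<n$. Note that $j$ need not equal $i_b$, contrary to your formula.

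The remedy you sketch instead would not work. Iterated residues at points do not commute when the integrand has poles along divisors such as $z_2=q_\alpha^{-1}z_1$, so you may not reorder them to take $z_n$ first. For example, take $\frac{1}{(z_1-x_1)(z_2-cz_1)}$. Taking the residue at $z_1=x_1$ and then at $z_2=cx_1$ gives $1$. Taking the $z_2$-residue first, with $z_1$ generic, gives $0$.

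Likewise, the global sum of residues over $\BK^*$ defines $J(\bpsi)_{\bn}$, not $J(\bpsi)_{\bx}$, so passing to it cannot localize the condition at $\bx$. Deferring to the literature therefore leaves the second claim unproved in your write-up, and with it the finite-dimensionality of $L(\bpsi)_{\bn}$.

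A smaller point: the conditions for different colour sequences are not reorderings of one another, since the $\zeta$-factors and the order of the residues change. This is harmless, because there are finitely many sequences and your Step 2 ranges over all of them.
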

	
	\medskip
	
	\begin{proof} As a module over the ring $\CV_{-\bn} = \BK[z_{i1},z_{i1}^{- 1},\dots,z_{in_i}, z_{in_i}^{- 1}]^{\fS_{\bn}}_{i \in I}$, the quotient $\CV_{-\bn} / J(\bpsi)_{\bx}$ is supported at the point
		\[
		\bx \in  (\BK^*)^{\bn} = \text{Spec}(\CV_{-\bn})
		\]
		The aforementioned quotient is a finite-dimensional module, because if $F$ is divisible by $(z_a-x_a)^N$ for large enough $N$ (compared with the number of factors in the numerators of $\zeta$ and denominators of $\psi$), then the residue that appears in \eqref{eqn:j x} automatically vanishes. For the second claim of Lemma~\ref{lem:residues}, it suffices to note that we can have a non-trivial residue in \eqref{eqn:j x} only if for all $b \in \{1,\dots,n\}$ either
		
		\medskip 
		
		\begin{itemize}
			
			\item $x_b$ is one of the finitely many poles of $\psi_{i_b}(z)$, or 
			
			\medskip 
			
			\item $x_b = x_a q^{-1}_{\alpha}$ for some $a < b$ and some arrow $\alpha : i_b \rightarrow i_a$ of the quiver
			
		\end{itemize}
		
		\medskip 
		
		\noindent Since there are finitely many choices for $x_b$ for all $b \in \{1,\dots,n\}$, we conclude that $L(\bpsi)_{\bx} \neq 0$ only for finitely many $\bx \in (\BK^*)^{\bn}$. \end{proof}
	
	\medskip 
	
	\noindent Recall that the $q$-character of any module $\UU^{\sh} \curvearrowright V$ is defined as (\cite{FR, FM, H Shifted})
	\begin{equation}
		\label{eqn:q char}
		\chi_q(V) = \sum_{\bpsi} [\bpsi] \dim_{\BK}(V_{\bpsi})
	\end{equation}
	where $V_{\bpsi}$ denotes the generalized eigenspace of $V$ on which $\ph^+_i(z)$ acts by $\psi_i(z)$, for all $i \in I$. The notion \eqref{eqn:q char} makes sense in those situations when the generalized eigenspaces are finite-dimensional, and Lemma~\ref{lem:residues} implies that this is the case for $V = L(\bpsi)$. Moreover, we have the following explicit description of the $q$-character:
	\[
	\chi_q(L(\bpsi)) = [\bpsi] \sum_{\bn \in \nn} \sum_{\bx \in (\BK^*)^{\bn}} \dim_{\BK} \left( L(\bpsi)_{\bx} \right) \prod_{i \in I} A_{i,x_{i1}}^{-1} \cdots A_{i,x_{in_i}}^{-1}
	\]
	where for all $i \in I$ and $x \in \BK^*$ we define the loop weight
	\begin{equation}
		\label{eqn:fm}
		A_{i,x}^{-1} = \left[\frac {\zeta_{ij} \left(\frac xz \right)}{\zeta_{ji} \left(\frac zx \right)} \right]_{j \in I}
	\end{equation}
	and we multiply the formal symbols $[\bpsi]$ for loop weights $\bpsi$ component-wise in $I$. 
	
	\medskip
	
	\subsection{Simple modules and folding}
	\label{sub:simple folding}
	
	Let us now consider the setting of Subsection~\ref{sub:quivers automorphism} with the notation therein. We consider the folded zeta functions \eqref{eqn:folded zeta}, and assume that \eqref{eqn:limit folded} holds. By applying the construction in Subsections~\ref{sub:zeta}-\ref{sub:double} to the folded zeta functions, we may define the \emph{folded quantum loop algebra}
	\[
	\oUU = \oUUp \otimes \BK[\ph_{\oi,d}^\pm]_{\oi \in I, d \in m_{\oi}\BN} / (\ph^+_{\oi,0}\ph^-_{\oi,0}-1) \otimes \oUUm
	\]
	By definition, $\oUUpm$ are isomorphic to the images of the homomorphisms
	\[
	\bar{\tUpsilon}^\pm : \bar{\tUU}^\pm \longrightarrow \oCV^\pm, \qquad e_{\oi,d}, f_{\oi,d} \mapsto z_{\oi 1}^d, \quad \forall d \in m_{\oi}\BZ
	\]
	where $\oCV^\pm$ are defined in \eqref{eqn:v folded}. Explicitly, $\oCS^- = \text{Im }\bar{\tUpsilon}^-$ is the linear span of
	\[
	f_{\oi_1,d_1} * \cdots * f_{\oi_n,d_n} = \text{Sym}_{\obn} \left[z_{\oi_1\bullet_1}^{d_1} \cdots z_{\oi_n\bullet_n}^{d_n} \prod_{1\leq a < b \leq n} \ozeta_{\oi_b\oi_a} \left(\frac {z_{\oi_b \bullet_b}}{z_{\oi_a \bullet_a}} \right) \right]
	\]
	for various $\oi_1,\dots,\oi_n \in \oI$, $d_1 \in m_{\oi_1}\BZ,\dots,d_n \in m_{\oi_n}\BZ$, and similarly for $\oCS^+$. 
	
	\medskip 
	
	\begin{definition}
		
		For any $\bar{\mathbf{r}} = (\bar{r}_{\oi})_{\oi \in \oI} \in \ozz$, we define the \emph{shifted folded quantum loop algebra} $\oUU^{\bar{\mathbf{r}}}$ by replacing the $e$-$f$ commutation relation \eqref{eqn:rel quantum 9 twisted strongly} by its shifted version
		\begin{equation}
			\label{eqn:shifted folded quantum loop algebra}
			\Big[e_{i}(x), f_{j}(y)\Big] = \left( \sum_{k=1}^m \frac {\delta_{\sigma^k(i)j}\delta(\frac{x}{y}\omega^k)}{m_i} \right) \Big( y^{-\bar{r}_{\oj}m_{\oj}}\ph^-_j(y) - \ph^+_i(x)\Big)
		\end{equation}
		(in particular, for $\fg$ of finite type, relation \eqref{eqn:shifted folded quantum loop algebra} coincides with that of \cite{LLL}). 
		
	\end{definition}
	
	\medskip 
	
	\noindent We call
	\begin{equation}
		\label{eqn:folded loop weight}
		\obpsi = \left(\opsi_{\oi}(z) = \sum_{d \in m_{\oi}\BN} \frac {\opsi_{\oi,d}}{z^d} \in \BK[[z^{-m_{\oi}}]]^\times \right)_{\oi \in \oI}
	\end{equation}
	a \emph{folded loop weight}, and we assume that it consists of expansions of completely factored rational functions of the form
	\[
	\opsi_{\oi}(z) = \gamma_{\oi} \frac {\left(1 - \left( \frac {a_{\oi 1}}{z} \right)^{m_{\oi}}\right)  \cdots \left(1 - \left( \frac {a_{\oi p_{\oi}}}{z} \right)^{m_{\oi}}\right)}{\left(1 - \left( \frac {b_{\oi 1}}{z} \right)^{m_{\oi}}\right)  \cdots \left(1 - \left( \frac {b_{\oi r_{\oi}}}{z} \right)^{m_{\oi}}\right)}
	\]
	for all $\oi \in \oI$, and various $a_{\oi 1},\dots,a_{\oi p_{\oi}},b_{\oi 1},\dots,b_{\oi r_{\oi}},\gamma_{\oi} \in \BK^*$. By analogy with Theorem~\ref{thm:simple}, we claim that for any folded loop weight $\obpsi$ there exists a simple module
	\begin{equation}
		\label{eqn:shifted simple module}
		\oUU^{\sh} \curvearrowright L(\obpsi) 
	\end{equation}
	explicitly given by
	\[
	L(\obpsi) = \bigoplus_{\obn \in \onn} L(\obpsi)_{\obn}, \qquad L(\obpsi)_{\obn} = \oCS_{-\obn} \Big/ J(\obpsi)_{\obn}
	\]
	where 
	\begin{equation}
		\label{eqn:j n folded}
		\begin{split}
			J(\obpsi)_{\obn} = \Big \{F \in &\CS_{-\obn} \Big|  \forall \obn = \bs^{\oi_1} + \cdots + \bs^{\oi_n}, \forall d_1 \in m_{\oi_1}\BZ,\dots,d_n \in m_{\oi_n}\BZ \\
			&\underset{z_n \in \BK^*}{\text{Res}} \cdots \underset{z_1 \in \BK^*}{\text{Res}} \frac {F (z_1,\dots,z_n)z_1^{d_1}\cdots z_n^{d_n}}{\prod_{1\leq a < b \leq n} \ozeta_{\oi_b\oi_a} \left(\frac {z_b}{z_a} \right)} \prod_{a=1}^n \opsi_{\oi_a}(z_a) = 0  \Big\}
		\end{split}
	\end{equation}
	Since the integrand in \eqref{eqn:j n folded} is a rational function in $z_a^{m_{i_a}}$, we note that the vanishing of the residue at any point $\{z_a = x_{a}\}_{a \in \{1,\dots,n\}}$ implies the vanishing of the analogous residues when the $x_a$'s are shifted independently by $m_{i_a}$-th roots of unity. Because of this, nothing changes if we replace the condition $d_a \in m_{\oi_a} \BZ$ in \eqref{eqn:j n folded} by the a priori stronger condition $d_a \in \BZ$. 
	
	\medskip 
	
	\begin{example} 
		\label{ex:trivial sigma}
		
		Assume $\sigma = \emph{Id}$, but $m > 1$ (while $m$ was defined as the order of $\sigma$ in Subsection~\ref{sub:quivers automorphism}, our construction only used the fact that $\sigma^m = \emph{Id}$). In this case, if the unfolded zeta functions are
		\[
		\zeta_{ij}(x) \sim (1-x)^{-1} \prod_{\alpha : i \rightarrow j} (1-xq_\alpha)
		\]
		then the folded zeta functions are
		\[
		\ozeta_{ij}(x) \sim (1-x^m)^{-1} \prod_{\alpha : i \rightarrow j} (1-x^m q^m_\alpha)
		\]
		for all $i,j \in \oI = I$. Moreover, to any unfolded loop weight 
		\[
		\bpsi = \left( \frac {\prod_s \left( 1-\frac {a_s}z \right)}{\prod_t \left( 1-\frac {b_t}z \right)} \right)_{i \in I}
		\]
		we may associate in obvious fashion the folded loop weight
		\[
		\obpsi = \left( \frac {\prod_s \left( 1- \left( \frac {a_s}z \right)^m \right)}{\prod_t \left( 1- \left(\frac {b_t}z\right)^m \right)} \right)_{i \in I} 
		\]
		Thus, if we write $y_{ia} = z_{ia}^m$, it is clear that the folded shuffle algebra $\oCS^-$ in the $z$ variables is identical to the unfolded shuffle algebra $\CS^-$ in the $y$ variables. Moreover, condition \eqref{eqn:j n} is the same as \eqref{eqn:j n folded} upon the change of variables $z_{ia}^m = y_{ia}$, because if a rational function in $z_{ia}^m$ has no residue at some point then it also has no residue at shifts of said point by arbitrary $m$-th roots of unity. We conclude that $L(\bpsi)$ is naturally identified with $L(\obpsi)$, as expected in the $\sigma = \emph{Id}$ case.
		
	\end{example}
	
	\medskip 
	
	\subsection{\texorpdfstring{$q$}{q}-characters and folding}
	\label{sub:q-characters folding}
	
	Define $q$-characters of modules $\oUU^{\sh} \curvearrowright V$ as
	\[
	\chi_q(V) = \sum_{\obpsi} [\obpsi] \dim_{\BK} \left(V_{\obpsi}\right) 
	\]
	where $V_{\obpsi}$ denotes the generalized eigenspace of $V$ on which 
	\[
	\ph^+_{\oi}(z) = \sum_{d \in m_{\oi}\BN} \frac {\ph_{\oi,d}^+}{z^d} \quad \text{acts by} \quad \opsi_{\oi}(z)
	\]
	(of course, we assume that all the aforementioned generalized eigenspaces are finite-dimensional). The explicit description of simple $\oUU^{\sh}$ modules from Subsection~\ref{sub:simple folding} allows us to give a shuffle algebra interpretation of the $q$-character of $L(\obpsi)$. Specifically, by analogy with \eqref{eqn:l x}, the generalized eigenspace decomposition is
	\[
	L(\obpsi)_{\obn} = \bigoplus_{\obx \in (\BK^*)^{\obn}/G_{\bm,\bn}} L(\obpsi)_{\obx}, \qquad L(\obpsi)_{\obx} = \CS_{-\obn} \Big / J(\obpsi)_{\obx}
	\]
	where $G_{\bm,\bn}$ is the finite abelian group \eqref{eqn:finite group} that rescales the variables $z_{\oi a}$ by $m_{\oi}$-th roots of unity. Thus, for any $\obx = (\ox_{\oi 1},\dots,\ox_{\oi \on_{\oi}})_{\oi \in \oI} \in (\BK^*)^{\obn}/G_{\bm,\bn}$, we define 
	\begin{equation}
		\label{eqn:j x folded}
		\begin{split}
			J(\obpsi)_{\obx} =& \Big\{ F \in \CS_{-\obn} \Big|\forall \ox_1,\dots,\ox_n \text{ ordering of }\obx, \obn = \bs^{\oi_1} + \cdots + \bs^{\oi_n}, \\
			&\underset{z_n = x_n}{\text{Res}} \cdots \underset{z_1 = x_1}{\text{Res}} \frac {F (z_1,\dots,z_n)(\text{any monomial})}{\prod_{1\leq a < b \leq n} \ozeta_{\oi_b\oi_a} \left(\frac {z_b}{z_a} \right)} \prod_{a=1}^n \opsi_{\oi_a}(z_a) = 0  \Big\}
		\end{split}
	\end{equation}
	As explained right before Example~\ref{ex:trivial sigma}, the reason why condition \eqref{eqn:j x folded} is invariant under the $G_{\bm,\bn}$ action on $\obx$ is the fact that the integrand is a function of $z_a^{m_{\oi_a}}$. The natural folded analogue of Lemma~\ref{lem:residues} holds. Moreover, we have that
	\[
	\chi_q(L(\obpsi)) = [\obpsi] \sum_{\obn \in \onn} \sum_{\obx \in (\BK^*)^{\obn}/G_{\bm,\bn}} \dim_{\BK} \left( L(\obpsi)_{\obx} \right) \prod_{\oi \in \oI} A_{\oi,\ox_{\oi 1}}^{-1} \cdots A_{\oi,\ox_{\oi n_{\oi}}}^{-1}
	\]
	where for all $\oi \in \oI$ and $\ox \in \BK^*/(\BZ/m_{\oi}\BZ)$ we define the folded loop weight
	\begin{equation}
		\label{eqn:fm fold}
		A_{\oi,\ox}^{-1} = \left[\frac {\ozeta_{\oi \oj} \left(\frac {\ox}z \right)}{\ozeta_{\oj \oi} \left(\frac z{\ox} \right)} \right]_{\oj \in \oI}
	\end{equation}
	and we multiply the formal symbols $[\obpsi]$ for loop weights $\obpsi$ component-wise in $\oI$.
	
	\medskip 
	
	\subsection{The main theorem}
	\label{sub:folding q-characters}
	
	Assuming \eqref{eqn:limit folded}, we will compare the simple modules
	\[
	\UU^{\sh} \curvearrowright L(\bpsi) \quad \text{and} \quad \oUU^{\sh} \curvearrowright L(\obpsi)
	\]
	associated to unfolded loop weights $\bpsi$ of \eqref{eqn:loop weight} and folded loop weights $\obpsi$ of \eqref{eqn:folded loop weight}. We assume that these loop weights are related to each other by the folding map:
	\[
	\fold(\bpsi) = \obpsi
	\]
	where the $\oi$-th component of $\obpsi$ is given in terms of the components of $\bpsi$ by
	\[
	\opsi_{\oi}(z) = \prod_{k = 1}^m \psi_{\sigma^k(i_0)} \left(z \omega^k\right)
	\]
	where $i_0 \in I$ is the distinguished preimage of $\oi \in \oI$. Note that $\opsi_{\oi}(z)$ is actually a function of $z^{m_{\oi}}$, and that the map $\fold$ is multiplicative with respect to the component-wise multiplication of loop weights.  Due to property \eqref{eqn:limit folded}, the unfolded loop weights \eqref{eqn:fm} and the folded loop weights \eqref{eqn:fm fold} are related by the equation 
	\[
	\fold(A_{\sigma^{\ell}(i_0),x\omega^{\ell}}^{-1}) = A_{\oi,x}^{-1}
	\]
	for all distinguished $J \ni i_0 \mapsto \oi \in \oI$ and all $\ell \in \BZ/m\BZ$. Assume we are given subsets
	\begin{equation}
		\label{eqn:partition folding}
		P_i \subseteq \BK^*
	\end{equation}
	for all $i \in I$, such that 
	\begin{equation}
		\label{eqn:condition folding}
		P_j  \subseteq P_i q_{\alpha}
	\end{equation}
	for all $\alpha : i \rightarrow j$, and
	\begin{equation}
		\label{eqn:disjoint folding}
		P_i \cap  P_{\sigma^k(i)} \omega^{-k} = \varnothing
	\end{equation}
	for all $i \in I$ and $k \in \{1,\dots,m-1\}$.
	
	\medskip
	
	\begin{definition}
		\label{def:acceptable folding}
		
		Given sets \eqref{eqn:partition folding}, we call an unfolded loop weight $\bpsi$ \emph{acceptable} if
		\begin{equation}
			\label{eqn:acceptable folding}
			b_{i1},\dots,b_{ir_i} \in P_i 
		\end{equation}
		for all $i \in I$, with the notation as in \eqref{eqn:factored}.
		
	\end{definition}
	
	\medskip
	
	\noindent The following result is our main theorem; it is inspired by the conjectures on folding $q$-characters of twisted quantum affine algebras, of \cite{H KR}, \cite{W QQ}, \cite{FQ}. 
	
	\medskip 
	
	\begin{theorem}
		\label{thm:main folding} 
		
		Consider the shifted quantum loop algebra modules $\UU^{\esh} \curvearrowright L(\bpsi)$ and $\oUU^{\esh} \curvearrowright L(\rho(\bpsi))$ defined in Subsections~\ref{sub:simple modules} and \ref{sub:simple folding} with respect to the zeta functions \eqref{eqn:unfolded zeta} and \eqref{eqn:folded zeta}, respectively (we assume that \eqref{eqn:limit folded} holds). If $\bpsi$ is acceptable in the sense of Definition \ref{def:acceptable folding}, then we have
		\[
		\fold(\chi_q(L(\bpsi))) = \chi_q(L(\fold(\bpsi)))
		\]
		
	\end{theorem}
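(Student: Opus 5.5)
Both $q$-characters are sums over weight spaces indexed by points $\bx$ (resp.\ $\obx$) of quotients $\CS_{-\bn}/J(\bpsi)_{\bx}$ (resp.\ $\oCS_{-\obn}/J(\obpsi)_{\obx}$). Since $\fold(A^{-1}_{\sigma^\ell(i_0),x\omega^\ell}) = A^{-1}_{\oi,x}$ and $\fold$ is multiplicative, the theorem will follow from the equality of multiplicities
\[
\dim L(\obpsi)_{\obx} = \sum_{\bx \mapsto \obx} \dim L(\bpsi)_{\bx}
\]
for every folded point $\obx$. Here $\bx \mapsto \obx$ means that $\bx$ lies over $\obx$ once each variable of color $\sigma^k(i_0)$ is rewritten via \eqref{eqn:formula 2} as a variable of color $i_0$ times $\omega^k$, modulo the group $G_{\bm,\bn}$. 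The plan is to prove this equality weight space by weight space. First treat the covering setting, which is combinatorially simpler; then adapt the argument, replacing $\iota_{\obn,\bn}$ from Proposition~\ref{prop:covering shuffle} with its folding analogue from Proposition~\ref{prop:folding shuffle}.

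\textbf{Step 1 (localization).} Acceptability is used here. By the proof of Lemma~\ref{lem:residues}, $L(\bpsi)_{\bx}\neq 0$ only if every coordinate of $\bx$ is built by a chain starting at a pole $b_{i\cdot}\in P_i$ and moving along arrows. Condition \eqref{eqn:condition folding} then forces $x_{ia}\in P_i$ for all coordinates of color $i$. Condition \eqref{eqn:disjoint folding} ensures that for each variable $\ox$ of the folded point there is at most one pair $(k,x)$ with $\ox=x\omega^{-k}$ and $x\in P_{\sigma^k(i_0)}$. Consequently a folded point $\obx$ with nonzero contribution has a \emph{unique} lift $\bx$ whose coordinates lie in the $P_i$'s, and that lift determines $\bn$. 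So it suffices to show $\dim L(\obpsi)_{\obx} = \dim L(\bpsi)_{\bx}$ for this lift.

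\textbf{Step 2 (comparison map).} Use $\iota_{\obn,\bn}$ of Proposition~\ref{prop:folding shuffle}, composed with completion at $\bx$. The aim is to show that it induces an isomorphism $\oCS_{-\obn}/J(\obpsi)_{\obx}\cong \CS_{-\bn}/J(\bpsi)_{\bx}$. Completed at the point $\obx$ (a free $G_{\bm,\bn}\rtimes\fS_{\obn}$-orbit modulo stabilizers), the bigger-group invariants localize to functions near the single point $\bx$ that are invariant under $\fS_{\bn}$ only. The key identity to verify is that, near $\bx$, the residue functional defining $J(\obpsi)_{\obx}$ equals the residue functional defining $J(\bpsi)_{\bx}$ times a factor that is regular and invertible at $\bx$. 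Two facts should give this. First, $\ozeta=\prod_k \zeta_{\sigma^k(\cdot)\cdot}(\omega^k\cdot)$, and the extra factors $\zeta_{\sigma^k(i)j}$ with $\sigma^k(i)\neq i$ or $k\neq 0$ have no zeros or poles at ratios of points of $\bx$, again by \eqref{eqn:disjoint folding} and \eqref{eqn:condition folding}. Second, $\opsi_{\oi}(z)=\prod_k\psi_{\sigma^k(i_0)}(z\omega^k)$, and near points of $P_{\sigma^k(i_0)}\omega^{-k}$ only one factor is singular. Since the residue conditions range over arbitrary monomials and all orderings, multiplying by an invertible regular function does not change the kernel.

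\textbf{Step 3 (surjectivity of localized shuffle algebras).} This is where I expect the main obstacle. Proposition~\ref{prop:folding shuffle} gives only an injection $\oCS\hookrightarrow\CS$, but we need the completion of $\oCS_{-\obn}$ at $\obx$ to surject onto the completion of $\CS_{-\bn}$ at $\bx$ modulo $J$. The plan is to show this by comparing spanning sets. Completed at $\bx$, an element $f_{i_1,d_1}*\cdots*f_{i_n,d_n}$ of $\CS$ agrees with the corresponding folded product $f_{\oi_1,d'_1}*\cdots$, up to the invertible extra zeta factors, on the unique $\fS_{\obn}$-orbit sheet through $\bx$; the other terms of the bigger symmetrization are supported near other points. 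Since monomials $z^{d}$ with $d\in m\BZ$ separate points modulo $G_{\bm,\bn}$ locally, one can multiply by suitable folded Laurent polynomials to kill the other sheets. Alternatively, injectivity in both directions can be argued via the nondegenerate pairings of Theorem~\ref{thm:pairings}, so that dimensions are equal. Once the covering case is done this way, the folding case only needs extra bookkeeping for the roots of unity $\omega^{k m_{\oi}}$ and the stabilizers $m_{\oi}$.
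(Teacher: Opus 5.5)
Your outline is the paper's proof. You reduce to $\dim L(\obpsi)_{\obx}=\sum_{\bx\mapsto\obx}\dim L(\bpsi)_{\bx}$, get a unique contributing lift from acceptability and \eqref{eqn:condition folding}--\eqref{eqn:disjoint folding}, and compare the two residue conditions through $\iota_{\obn,\bn}$. You then prove surjectivity by localization, first for coverings and then with the group enlarged to $G_{\bm,\bn}\rtimes\fS_{\obn}$.

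\textbf{Gap in Step 2.} There is a genuine gap, and the paper's own argument has the same omission: it only checks that $\psi_{i_b}/\opsi_{\oi_b}$ does not vanish. For the residue functionals to differ by a unit, you need
\[
\opsi_{\oi}(z)/\psi_{i_b}(z\omega^{\ell_b})=\prod_{k\not\equiv\ell_b}\psi_{\sigma^k(i_0)}(z\omega^k)
\]
to be regular \emph{and nonvanishing} near $z\in P_{i_b}\omega^{-\ell_b}$. You only check that the other factors are not singular there. That is what \eqref{eqn:disjoint folding} gives for the poles $b_{j\cdot}\in P_j$. Acceptability says nothing about the zeros $a_{j\cdot}$, and a zero of another factor can cancel the relevant pole. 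The folded residue then vanishes while the unfolded one does not.

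Concretely, take:
\begin{itemize}
\item vertices $1,2$, each with one loop of parameter $q^2$ and no other arrows;
\item $\sigma$ the swap, so $m=2$ and $\omega=-1$, and $P_1=P_2=aq^{2\BZ}$;
\item $\psi_1(z)=\frac{zq-aq^{-1}}{z-a}$ and $\psi_2(z)=1+\frac az$.
\end{itemize}
Normalizing $\zeta_{12}=\zeta_{21}=1$, we get $\ozeta_{\bar1\bar1}=\zeta_{11}$, \eqref{eqn:limit folded} holds, and $\bpsi$ is acceptable. Here $\chi_q(L(\bpsi))=[\bpsi](1+A_{1,a}^{-1})$. But $\opsi_{\bar1}(z)=\psi_1(z)\psi_2(-z)=q-q^{-1}\frac az$ has no poles in $\BK^*$, so $L(\fold(\bpsi))$ is one-dimensional. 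The statement therefore needs an extra hypothesis, for instance that the zeros of each $\psi_i$ also lie in $P_i$. This holds for weights of $\CO_{\BZ}$-type with $P_i=q^{\BZ}$. Under it, \eqref{eqn:disjoint folding} makes the factor a unit and your Step 2 goes through.

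\textbf{Step 3 needs sharpening.} The terms of the larger symmetrization are Laurent polynomials, so they are not ``supported near other points.'' Multiplying by $\fS_{\obn}$-symmetric functions cannot separate the sheets. Your fix works only if the separating function is inserted \emph{inside} $\mathrm{Sym}_{\obn}$, where it is $G_{\bm,\bn}$-invariant but not $\fS_{\obn}$-symmetric. It must be $\equiv 1$ to high order along all of $\fS_{\bn}\tilde x$, because the residue conditions range over all orderings. It must also vanish to high order on the rest of the big orbit. That is exactly the paper's Claim~\ref{claim:commutative algebra}: the Chinese-remainder element $e_x$ satisfies $\Sigma(e_xa)\equiv a$ mod $\fm_x^N$. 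Combined with inverting the extra zeta factors modulo $J(\bpsi)_{\bx}$, this gives \eqref{eqn:remains to show}. Your alternative through Theorem~\ref{thm:pairings} is not an independent argument, since it just restates the same surjectivity.
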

	
	\medskip 
	
	\begin{corollary}
		\label{cor:main}
		
		For twisted quantum loop algebras of any Kac-Moody type (as in Subsection~\ref{sub:twisted quantum loop}), the assumptions above hold for
		\[
		P_{i} = q^{\BZ}, \ \forall i \in I
		\]
		More generally, the $P_{i}$ could be defined to be any subset of $\BC^*$ which is invariant with respect to multiplication by $q$, and is disjoint from all its shifts by non-trivial $m$-th roots of unity. In this case, Theorem~\ref{thm:main folding} yields Theorem~\ref{thm:main}.
		
	\end{corollary}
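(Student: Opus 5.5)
The statement to prove is Corollary~\ref{cor:main}. The plan is to verify the hypotheses of Theorem~\ref{thm:main folding} for the Kac--Moody data and then check that its conclusion is exactly Theorem~\ref{thm:main}.

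\emph{Step 1 (condition \eqref{eqn:limit folded}).} Here we are in the setting of Subsection~\ref{sub:auto}, with unfolded zeta functions \eqref{eqn:zeta strongly} and folded zeta functions \eqref{eqn:zeta strongly twisted}. The monomials are normalized by \eqref{eqn:zeta strongly exchange} and \eqref{eqn:zeta strongly twisted exchange}. The ratio $\ozeta_{\oi\oj}(x)/\ozeta_{\oj\oi}(x^{-1})$ is therefore a product over $k$ of factors $\frac{x\omega^k q^{d_{\sigma^k(i)j}}-1}{x\omega^k-q^{d_{\sigma^k(i)j}}}$. Using $d_{\sigma^\ell(i)\sigma^k(j)}=d_{\sigma^{\ell-k}(i)j}$, I reindex $k\mapsto k-\ell$ and compare with the right-hand side of \eqref{eqn:limit folded}, which is built from \eqref{eqn:zeta strongly exchange}; the two expressions agree factor by factor. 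This is the remark already made after \eqref{eqn:limit folded}, so it only needs a brief check.

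\emph{Step 2 (the sets $P_i$).} In the Kac--Moody quiver of Remark~\ref{rem:quiver}, every arrow $\alpha: i\to j$ has parameter $q_\alpha=q^{d_{ij}}\in q^{\BZ}$.
\begin{itemize}
\item If each $P_i$ is a single $q^{\BZ}$-invariant set $P$, then $P_iq_\alpha=P$, so \eqref{eqn:condition folding} holds trivially.
\item Condition \eqref{eqn:disjoint folding} becomes $P\cap P\omega^{-k}=\varnothing$ for $1\le k\le m-1$, which is exactly the stated hypothesis that $P$ is disjoint from its nontrivial root-of-unity shifts.
\item For $P=q^{\BZ}$ this disjointness holds because $q\notin\sqrt[\BN]{1}$. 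Indeed, if $q^a=q^b\omega^{-k}$ then $q^{m(a-b)}=1$, which forces $a=b$ and hence $\omega^k=1$.
\end{itemize}

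\emph{Step 3 (deducing Theorem~\ref{thm:main}).} With these $P_i$, a loop weight $\bpsi$ is acceptable exactly when all poles $b_{it}$ of the $\psi_i$ lie in $P$. Theorem~\ref{thm:main folding} then gives $\fold(\chi_q(L(\bpsi)))=\chi_q(L(\fold(\bpsi)))$ for the algebras $\UU^{\sh}$ and $\oUU^{\sh}$. It remains to identify these with the shifted versions of $U_q(L\fg)$ and $U_q(L\fg^\sigma)$. This follows from Definition~\ref{def:twisted clunky} together with Proposition~\ref{prop:twisted wheel strongly}, and from the general construction (via \eqref{eqn:zeta strongly twisted}) in the non-strongly symmetrizable case. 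Two normalizations need care:
\begin{itemize}
\item the factor $(q_i-q_i^{-1})^{-1}$ in the $e$--$f$ relation, which is a harmless rescaling of the pairing (see the footnote to \eqref{eqn:rel quantum 8 strongly});
\item the factor $1/m_i$ in \eqref{eqn:rel quantum 9 twisted strongly} and \eqref{eqn:shifted folded quantum loop algebra}, which likewise only rescales generators.
\end{itemize}

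\emph{Step 4 (the category $\CO^{\sh}_{\BZ}$).} Simple modules in $\CO^{\sh}_{\BZ}$ have highest loop weights that are products of $Y_{i,x}^{\pm1}$ and $\Psi_{i,x}^{\pm1}$ with $x\in q^{\BZ}$, so their poles lie in $q^{\BZ}$ and they are acceptable. In particular this recovers Conjecture~\ref{conj:main}: there $\fold(Y_{i,x})$ is the twisted fundamental weight, which is compatible with the normalization \eqref{eqn:fold intro 2}.

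I expect the main obstacle to be Step 3, namely matching the algebra $\oUU^{\sh}$ and its simple modules with the standard shifted twisted algebras and their highest weight theory. The other steps are direct verifications.
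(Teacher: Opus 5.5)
Your proposal is correct and follows the paper's intended argument; the paper states the corollary without a written proof. Since every arrow parameter is $q^{d_{ij}}\in q^{\BZ}$, a single set $P$ with $Pq=P$ satisfies \eqref{eqn:condition folding}, and \eqref{eqn:disjoint folding} reduces to $P\cap P\omega^{-k}=\varnothing$, which holds for $P=q^{\BZ}$ because $q$ is not a root of unity. Step 3 is not the obstacle you expect, because Theorem~\ref{thm:main} is phrased directly in terms of the modules of Subsections~\ref{sub:simple modules} and \ref{sub:simple folding}; also, the factor $1/m_i$ in \eqref{eqn:rel quantum 9 twisted strongly} is not a rescaling of generators but simply rewrites $\sum_{d\in m_i\BZ}(x/y)^d$ as an average of $m_i$ delta functions.
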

	
	\medskip
	
	\subsection{Covering quivers and \texorpdfstring{$q$}{q}-characters}
	\label{sub:covering q-characters}
	
	Before we prove Theorem~\ref{thm:main folding}, we will find it more convenient to state and prove its analogue in the setting of Subsection~\ref{sub:covering quivers}. With the notation therein, consider upper zeta functions \eqref{eqn:zeta above} and lower zeta functions \eqref{eqn:zeta below}, and assume that they are both balanced and that \eqref{eqn:limit covering} holds. This allows us to define the upper and lower double quantum loop algebras, and denote them by $\UU$ and $\oUU$, respectively. We will call the loop weights
	\[
	\bpsi = (\psi_i(z))_{i\in I} \quad \text{and} \quad \obpsi = (\opsi_{\oi}(z))_{\oi \in \oI}
	\]
	upper and lower, respectively. To such loop weights, we associate simple modules
	\[
	\UU^{\sh} \curvearrowright L(\bpsi) \quad \text{and} \quad \oUU^{\sh} \curvearrowright L(\obpsi)
	\]
	as in Subsection~\ref{sub:simple modules} (we do not need to invoke the treatment of Subsection~\ref{sub:simple folding} because, as opposed from the setting of folding quivers with automorphisms, the setting of coverings of quivers does not require raising any variables of shuffle algebras to $m_i$-th powers), and our goal now is to compare these modules. There is a natural folding map from upper to lower loop weights, given by 
	\[
	\fold(\bpsi) = \obpsi
	\]
	where the $\oi$-th component of $\obpsi$ is given in terms of the components of $\bpsi$ by
	\begin{equation}
		\label{eqn:loop weight covering}
		\opsi_{\oi}(z) = \prod_{i \in \fold^{-1}(\oi)} \psi_i\left(\omega_i  z\right)
	\end{equation}
	It is clear that the map $\fold$ is multiplicative with respect to the component-wise multiplication of loop weights. Due to property \eqref{eqn:limit covering}, the upper and lower versions of the loop weights \eqref{eqn:fm} are related by the equation 
	\begin{equation}
		\label{eqn:compatible covering}
		\fold(A_{i,x}^{-1}) = A_{\oi,\frac x{\omega_i}}^{-1}
	\end{equation}
	for all $i \in I$ and $x \in \BK^*$. We assume that there exist subsets
	\begin{equation}
		\label{eqn:partition covering}
		P_{i} \subseteq \BK^*
	\end{equation}
	for all $i \in I$, such that
	\begin{equation}
		\label{eqn:condition covering}
		P_{j}  \subseteq P_{i} q_{\alpha}
	\end{equation}
	for all $\alpha : i \rightarrow j$, and 
	\begin{equation}
		\label{eqn:disjoint covering}
		\frac {P_i}{\omega_i} \cap \frac {P_{i'}}{\omega_{i'}} = \varnothing
	\end{equation}
	for all $i \neq i'$ with $\fold(i) = \fold(i')$.
	
	\medskip 
	
	\begin{definition}
		\label{def:acceptable covering}
		
		Given sets \eqref{eqn:partition covering}, we call an upper loop weight $\bpsi$ \emph{acceptable} if
		\begin{equation}
			\label{eqn:acceptable covering}
			b_{i1},\dots,b_{ir_i} \in P_i
		\end{equation}
		for all $i \in I$, with the notation as in \eqref{eqn:factored}.
		
	\end{definition}
	
	\medskip
	
	\begin{theorem}
		\label{thm:main covering}
		
		Consider the shifted quantum loop algebra modules $\UU^{\esh} \curvearrowright L(\bpsi)$ and $\oUU^{\esh} \curvearrowright L(\rho(\bpsi))$ defined in Subsection~\ref{sub:simple modules} with respect to the zeta functions \eqref{eqn:zeta above} and \eqref{eqn:zeta below}, respectively (we assume that \eqref{eqn:limit covering} holds). If $\bpsi$ is acceptable in the sense of Definition \ref{def:acceptable covering}, then we have
		\begin{equation}
			\label{eqn:main covering}
			\fold(\chi_q(L(\bpsi))) = \chi_q(L(\fold(\bpsi)))
		\end{equation}
		
	\end{theorem}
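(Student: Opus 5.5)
We compare the two $q$-characters weight space by weight space, using the explicit descriptions $L(\bpsi)_{\bx}=\CS_{-\bn}/J(\bpsi)_{\bx}$ and $L(\obpsi)_{\obx}=\oCS_{-\obn}/J(\obpsi)_{\obx}$ from Subsection~\ref{sub:q-characters}. By \eqref{eqn:compatible covering} and the multiplicativity of $\fold$, the term $[\bpsi]\prod A^{-1}_{i,x_{ia}}$ on the left of \eqref{eqn:main covering} is sent to $[\obpsi]\prod A^{-1}_{\oi,x_{ia}/\omega_i}$. Hence \eqref{eqn:main covering} reduces to the following identity for every $\obn\in\onn$ and every $\obx\in(\BK^*)^{\obn}$:
\[
\dim L(\obpsi)_{\obx}=\sum_{(\bn,\bx)\mapsto(\obn,\obx)} \dim L(\bpsi)_{\bx}.
\]
The sum runs over all upper $(\bn,\bx)$ with $\fold(\bn)=\obn$ whose rescaled multisets $\{x_{ia}/\omega_i\}_{i\in\fold^{-1}(\oi)}$ equal $\obx_{\oi}$.

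\textbf{Step 1: the sum has at most one term.} By the proof of Lemma~\ref{lem:residues}, $L(\bpsi)_{\bx}\neq 0$ forces each coordinate of color $i$ to be either a pole of $\psi_i$ (which lies in $P_i$ by acceptability) or of the form $x_a q_\alpha^{-1}$ for an arrow $\alpha$ into an earlier color. Using \eqref{eqn:condition covering} inductively, every coordinate of color $i$ therefore lies in $P_i$. The disjointness \eqref{eqn:disjoint covering} then says that a point $\ox\in\BK^*$ can equal $x/\omega_i$ with $x\in P_i$ for at most one $i\in\fold^{-1}(\oi)$. So each lower $\obx$ has at most one upper lift $(\bn,\bx)$ that contributes, and the coloring of each lower coordinate is determined.

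\textbf{Step 2: lower weight spaces supported at unliftable points vanish.} Run the same argument on the lower side. A nonzero residue in \eqref{eqn:j x} for $\obpsi$ requires each $\ox_b$ to be either a pole of $\opsi_{\oi_b}$, hence of the form $b_{ic}/\omega_i$ with $b_{ic}\in P_i$, or $\ox_a q_{\oalpha}^{-1}$. In the latter case \eqref{eqn:matching parameters} and \eqref{eqn:condition covering} let us lift it as $x_a q_\alpha^{-1}\in P_j$ with the rescaled point. Consequently $J(\obpsi)_{\obx}=\oCS_{-\obn}$ unless $\obx$ lifts, and in that case the lift is the unique $\bx$ of Step 1.

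\textbf{Step 3: identify the surviving quotients.} For lifted points, we use the map $\iota_{\obn,\bn}:\oCS_{-\obn}\hookrightarrow\CS_{-\bn}$ of Proposition~\ref{prop:covering shuffle}, which sends $z_{\oi}\mapsto z_i/\omega_i$. Write $\ozeta_{\oi\oj}(x\omega_j/\omega_i)=\zeta_{ij}(x)\cdot R_{ij}(x)$, where $R_{ij}$ collects the factors coming from the other preimages of $\oi$. At the point $\bx$, each extra factor vanishes only at points $z_b/z_a=q_{\alpha'}\omega_{i'}/\omega_i$ with $i'\neq i$. Such a value would place the two coordinates in incompatible $P$'s, contradicting \eqref{eqn:disjoint covering}. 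So the extra factors are invertible near $\bx$, and likewise $\opsi_{\oi}(z/\omega_i)=\psi_i(z)\cdot(\text{unit near }\bx)$.

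Because the residues in \eqref{eqn:j x} allow \emph{any monomial}, and hence any Laurent polynomial, these local units can be absorbed. This gives $\iota^{-1}(J(\bpsi)_{\bx})=J(\obpsi)_{\obx}$. The residue conditions for orderings of lower variables that mix colors are handled the same way, since the color of each coordinate is forced by Step 1. It follows that $\iota$ induces an injection $L(\obpsi)_{\obx}\hookrightarrow L(\bpsi)_{\bx}$.

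\textbf{Main obstacle: surjectivity.} It remains to show that this injection is onto, i.e. that $\CS_{-\bn}=\iota(\oCS_{-\obn})+J(\bpsi)_{\bx}$. Both quotients depend only on the formal neighbourhood of the point, and by Step 1 the point lies in the locus where the upper and lower local shuffle structures agree up to units. The first approach is to show that any product $f_{i_1,d_1}*\cdots*f_{i_n,d_n}$ from \eqref{eqn:spherical} agrees modulo $J(\bpsi)_{\bx}$ with $\iota$ of the corresponding lower product \eqref{eqn:spherical 2} multiplied by a unit factor. The terms of the larger symmetrization that permute variables across different $i,i'\in\fold^{-1}(\oi)$ should contribute nothing to the residues at $\bx$, because they move coordinates out of their $P_i$. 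Failing that, we would fall back on a dimension count: the same residue functionals, read through the non-degenerate pairing of Theorem~\ref{thm:pairings}, would show that the two quotients have equal dimension.
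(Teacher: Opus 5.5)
Your reduction to a weight-space identity, the uniqueness of the contributing lift (Steps 1--2), and the injectivity of $\oCS_{-\obn}/J(\obpsi)_{\obx}\to\CS_{-\bn}/J(\bpsi)_{\bx}$ (Step 3) coincide with the first half of the paper's proof; Step 3 uses the same invertibility of $\ozeta/\zeta$ and $\psi/\opsi$ near the relevant points. You correctly isolate surjectivity, $\CS_{-\bn}=\iota_{\obn,\bn}(\oCS_{-\obn})+J(\bpsi)_{\bx}$, as the main point, but you do not prove it, and your first approach rests on a false claim. Write $\iota_{\obn,\bn}(f_{\oi_1,d_1}*\cdots*f_{\oi_n,d_n})=\sum_{g\in\ofS/\fS_{\bn}}g(G)$ with $G=\mathrm{Sym}_{\bn}[\cdots]$. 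For $g\notin\fS_{\bn}$, the jet of $g(G)$ at $\bx$ is the jet of $G$ at the \emph{different} point $g^{-1}\bx$, and nothing forces $G$ to vanish there. Moving coordinates out of their $P_i$ relocates the evaluation; it does not kill it.

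A minimal example already shows this. Take two vertices $1,2$ with no arrows covering one vertex with no arrows, $\bn=(1,1)$, $\psi_i(z)=(1-b_i/z)^{-1}$, and $b_1\neq b_2/\omega_2$. Then $J(\bpsi)_{\bx}=\{F : F(b_1,b_2)=0\}$. The cross term of $\iota(f_{\bar{1},d_1}*f_{\bar{1},d_2})$ takes the value $(b_2/\omega_2)^{d_1}b_1^{d_2}\,\ozeta(\omega_2b_1/b_2)\neq 0$ at $\bx$, so it is not in $J(\bpsi)_{\bx}$. Your fallback via Theorem~\ref{thm:pairings} is not an argument either. That pairing is global, and extracting from it an equality of dimensions of the \emph{local} quotients would require exactly the comparison you are trying to prove.

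The missing idea is to localize \emph{before} averaging; this is the paper's Claim~\ref{claim:commutative algebra}. By \eqref{eqn:disjoint covering}, the $\fS_{\bn}$-orbits inside the $\ofS$-orbit of a lift of $\bx$ are pairwise distinct points of $\operatorname{Spec}\CV_{-\bn}$. The Chinese remainder theorem therefore gives $e\in\CV_{-\bn}$ with $e\equiv 1$ modulo $\fm_{\bx}^N$ and $e\in\fm_{y}^K$ at every other such orbit $y$. For $g\in\ofS\setminus\fS_{\bn}$, the function $g(ea)$ then vanishes to order $K$ at $\bx$. Take $K$ large compared to $N$ (the paper uses $K=Nn$). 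Then the average $\Sigma(ea)=\frac{1}{|\fS_{\bn}|}\sum_{g\in\ofS}g(ea)$ is $\ofS$-invariant and satisfies $\Sigma(ea)\equiv a$ modulo $\fm_{\bx}^N\subseteq J(\bpsi)_{\bx}$.

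Apply this to $a$ in the $\CV_{-\bn}$-module $I$ spanned by $\mathrm{Sym}_{\bn}$ of the pulled-back \emph{lower} integrands. Then $ea\in I$ and $\Sigma(I)=\iota_{\obn,\bn}(\oCS_{-\obn})$. Combine this with $I\equiv\CS_{-\bn}$ modulo $J(\bpsi)_{\bx}$, which is where your unit observation is really needed. Together these give the surjectivity and complete the proof.
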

	
	\medskip 
	
	\begin{proof} We henceforth assume that $\BK$ is algebraically closed (for instance, by extending all scalars in the formulas below to the algebraic closure of $\BK$). Consider any $\bn = (n_i \geq 0)_{i \in I} \in \nn$ and $\fold(\bn) = \obn = (\on_{\oi} \geq 0)_{\oi \in \oI} \in \onn$ and recall the natural inclusion of $\BK$-algebras \eqref{eqn:iota}
		\[
		\iota_{\obn,\bn}: \oCV_{-\obn} = \BK\left[z_{\oi 1}, z_{\oi 1}^{-1},\dots,z_{\oi \on_{\oi}}, z_{\oi \on_{\oi}}^{-1}\right]^{\fS_{\obn}}_{\oi \in \oI} \hookrightarrow \CV_{-\bn} = \BK\left[z_{i 1}, z_{i 1}^{- 1},\dots,z_{i n_{i}}, z_{i n_{i}}^{- 1}\right]^{\fS_{\bn}}_{i \in I}
		\]
		Note that the polynomial rings in the two sides of the equation are identified by \eqref{eqn:variable shift}, but the LHS features invariants with respect to a bigger group than the RHS. Recall that the LHS of \eqref{eqn:main covering} counts the dimension of the fibers of the $\CV_{-\bn}$ module
		\[
		\CS_{-\bn} \Big/ J(\bpsi)_{\bn}
		\]
		over various $\bx \in (\BK^*)^{\bn} = \text{Spec}(\CV_{-\bn})$. Similarly, the RHS of \eqref{eqn:main covering} counts the dimension of the fibers of the $\oCV_{-\obn}$ module
		\[
		\oCS_{-\obn} \Big/ J(\obpsi)_{\obn}
		\]
		over various $\obx \in (\BK^*)^{\obn} = \text{Spec}(\CV_{-\obn})$. Specifically, the aforementioned fibers are
		\[
		\CS_{-\bn} \Big/ J(\bpsi)_{\bx} \quad \text{and} \quad \oCS_{-\obn} \Big/ J(\obpsi)_{\obx}
		\]
		respectively. We remind the reader that $J(\bpsi)_{\bx}$ is the $\CV_{-\bn}$ submodule of $\CS_{-\bn}$ consisting of Laurent polynomials $F(z_{i1},\dots,z_{in_i})_{i \in I}$ such that
		\[
		\underset{z_n = x_n}{\text{Res}} \cdots \underset{z_1 = x_1}{\text{Res}} \frac {F (z_1,\dots,z_n)(\text{any monomial})}{\prod_{1\leq a < b \leq n} \zeta_{i_bi_a} \left(\frac {z_b}{z_a} \right)} \prod_{a=1}^n \psi_{i_a}(z_a) = 0  
		\]
		for any ordering $x_1,\dots,x_n$ of $\bx$ compatible with any ordering $i_1,\dots,i_n$ of $\bn$, and similarly for $J(\obpsi)_{\obx}$. With this in mind, formula \eqref{eqn:main covering} follows via the compatibility \eqref{eqn:compatible covering} from the equality of the dimensions of the two sides in equation \eqref{eqn:claim iso} below. 
		
		\medskip 
		
		\begin{claim} 
			\label{claim:iso}
			
			For any $\obx \in (\BK^*)^{\obn}$, the natural map
			\begin{equation}
				\label{eqn:claim iso}
				\oCS_{-\obn} \Big/ J(\obpsi)_{\obx} \longrightarrow \mathop{\bigoplus_{\bn \in \fold^{-1}(\obn)}}_{\bx \in (\BK^*)^{\bn} \cap \fold^{-1}(\obx)} \CS_{-\bn} \Big/ J(\bpsi)_{\bx}
			\end{equation}
			induced by $\iota_{\obn,\bn} : \oCV_{-\obn} \hookrightarrow \CV_{-\bn}$ is an isomorphism. In the formula above, $\fold(\bn) \in \onn$ is given by summing over the fibers of $\fold : I \rightarrow \oI$, while 
			\[
			\fold(\text{tuple }\bx \text{ with entries }x \text{ on }i\text{-th spot}) = \left( \text{tuple }\obx \text{ with entries } \frac {x}{\omega_i} \text{ on }\oi\text{-th spot} \right)
			\]
			
		\end{claim}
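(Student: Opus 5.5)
The plan is to compare the two sides of \eqref{eqn:claim iso} through their dual spaces, i.e.\ through the residue functionals that cut out $J(\obpsi)_{\obx}$ and $J(\bpsi)_{\bx}$. Each fiber $\CS_{-\bn}/J(\bpsi)_{\bx}$ is finite-dimensional by Lemma~\ref{lem:residues}. Its dual is the span of the functionals
\[
F \mapsto \underset{z_n = x_n}{\text{Res}} \cdots \underset{z_1 = x_1}{\text{Res}} \frac {F (z_1,\dots,z_n)\, z_1^{d_1}\cdots z_n^{d_n}}{\prod_{a < b} \zeta_{i_bi_a} \left(\frac {z_b}{z_a} \right)} \prod_{a=1}^n \psi_{i_a}(z_a),
\]
restricted to $\CS_{-\bn}$; the lower fiber has the analogous dual with $\ozeta$ and $\opsi$. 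The claim then amounts to two facts. First, every lower functional, pulled back along $\iota_{\obn,\bn}$, is a finite combination of upper functionals at the lifts $\bx$ of $\obx$; this gives well-definedness, i.e.\ $J(\obpsi)_{\obx}$ maps into each $J(\bpsi)_{\bx}$. Second, conversely, every upper functional at a lift is a combination of lower functionals, once both are restricted to $\oCS_{-\obn} \subset \CS_{-\bn}$ (Proposition~\ref{prop:covering shuffle}). Together with surjectivity, this gives the isomorphism.

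The first step is a support statement coming from acceptability. By the second half of the proof of Lemma~\ref{lem:residues}, an iterated residue can be nonzero only if each $x_b$ is either a pole $b_{i_b,\bullet}\in P_{i_b}$ of $\psi_{i_b}$, or equals $x_a q_\alpha^{-1}$ for an arrow $\alpha: i_b\to i_a$ with $a<b$. Condition \eqref{eqn:condition covering} propagates along these chains, so nonzero upper residues only occur at points where every variable of color $i$ lies in the set $P_i$ generated this way. On the lower side, after the shift \eqref{eqn:variable shift} the poles of $\opsi_{\oi}$ are the $b_{i,\bullet}/\omega_i$ for $i\in\fold^{-1}(\oi)$, and each factor of $\ozeta_{\oi\oj}$ comes from a unique upper arrow by \eqref{eqn:assumption}. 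Hence every lower variable value in a relevant $\obx$ lies in some $P_i/\omega_i$, and by the disjointness \eqref{eqn:disjoint covering} it determines its upper color $i$ uniquely. So for relevant $\obx$ there is exactly one lift $(\bn,\bx)$ whose upper fiber can be nonzero, and every other summand on the right of \eqref{eqn:claim iso} vanishes.

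The second step compares integrands near this distinguished point. Fix the lift and an ordering $i_1,\dots,i_n$ of $\bn$, and write the lower integrand in shifted variables. It equals the upper integrand times
\[
\prod_{a<b} \frac{\zeta_{i_bi_a}(z_b/z_a)}{\ozeta_{\oi_b\oi_a}(\cdots)}\cdot \prod_a \prod_{i'\neq i_a} \psi_{i'}(\cdots).
\]
This uses \eqref{eqn:limit covering} together with the product formula for $\ozeta$ in terms of the $\zeta_{i'j}$. The extra factors come only from arrows and poles whose source colors differ from the colors forced by the support statement. By \eqref{eqn:disjoint covering} they should have neither zeros nor poles at the relevant point, so they are invertible power series there. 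Consequently the lower functionals (with arbitrary monomials, i.e.\ arbitrary Laurent coefficients) span the same space as the upper functionals at the lift, and this gives well-definedness and injectivity.

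The main obstacle is surjectivity: showing that $\oCS_{-\obn}$, which is a strictly smaller space than $\CS_{-\bn}$ with invariants under a bigger group, already surjects onto $\CS_{-\bn}/J(\bpsi)_{\bx}$. My first attempt would be to argue that the restriction of the upper functionals to $\oCS_{-\obn}$ remains linearly independent. Both quotients are cyclic modules, $L(\bpsi)_{\bx}$ and $L(\obpsi)_{\obx}$, spanned by the images of products $f_{i_1,d_1}*\cdots*f_{i_n,d_n}$. The lower product $f_{\oi_1,d_1}*\cdots*f_{\oi_n,d_n}$, localized at $\bx$, should equal the corresponding upper product with colors fixed by the support statement, multiplied by the invertible factors from the second step. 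Since $\oCS_{-\obn}$ is a module over $\oCV_{-\obn}$, and elements of $\oCV_{-\obn}$ separate $\bx$ from the other lifts, one can multiply these invertible local factors away modulo $J(\bpsi)_{\bx}$. I expect this localization argument to require the most care.
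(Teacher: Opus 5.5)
Your first two steps are essentially the paper's argument for well-definedness and injectivity. Acceptability together with \eqref{eqn:disjoint covering} singles out a unique lift $(\bn,\bx)$, and the lower and upper integrands differ by factors that are regular and non-vanishing there. The gap is in surjectivity, which you rightly flag as the crux; the mechanism you propose fails for two reasons.

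First, $\iota_{\obn,\bn}$ of a lower product is not, modulo $J(\bpsi)_{\bx}$, a single upper product times invertible factors. Writing $\text{Sym}_{\obn}=\sum_{g\in\ofS/\fS}g\circ\text{Sym}_{\bn}$, it is a sum of $|\ofS/\fS|$ summands, one for each way of assigning upper colors to the lower variables. The summands with $g\notin\fS$ do not drop out modulo $J(\bpsi)_{\bx}$. A small example: take two vertices $1,2$ without arrows covering a single vertex, with $\ozeta(x)=(1-x)^{-1}$, $\bn=(1,1)$, $\bx=(x_1,x_2)$, and lower variables $u_1=z_{11}$, $u_2=z_{21}/\omega_2$. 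The lower element $\text{Sym}[\ozeta(u_2/u_1)]$ equals $1$. Your recipe instead gives the upper product $1$ times $\frac{u_1}{u_1-u_2}$, whose value at the lift is $\frac{x_1}{x_1-x_2/\omega_2}\neq 1$. The discrepancy is exactly the $g\neq 1$ summand $\frac{u_2}{u_2-u_1}$.

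Second, multipliers from $\oCV_{-\obn}$ cannot remove these summands. They are $\ofS$-invariant, so they take the same value at every point of $\ofS\tilde x$: all lifts of $\obx$ lie in one fibre of $\mathrm{Spec}\,\CV_{-\bn}\to\mathrm{Spec}\,\oCV_{-\obn}$. So, contrary to what you assert, they do not separate $\bx$ from the other lifts. Nor can symmetric multipliers cancel the non-symmetric ratio $\prod_{a<b}\ozeta/\zeta$ sitting inside the symmetrization.

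The missing idea is the paper's Claim~\ref{claim:commutative algebra}: take the cutoff from the ring with the \emph{smaller} symmetry, then re-average.
\begin{itemize}
\item Let $I$ be the span of the $\fS_{\bn}$-symmetrizations $\text{Sym}_{\bn}[z^{d}\prod_{a<b}\ozeta(z_b/z_a)]$.
\item By the Chinese remainder theorem, choose $e\in\CV_{-\bn}$ with $e\equiv 1$ modulo $\fm_{\bx}^N$ and $e$ vanishing to high order at the other $\fS$-orbits contained in $\ofS\tilde x$.
\item For $a\in I$, the average $\Sigma(ea)=\frac{1}{|\fS|}\sum_{g\in\ofS}g(ea)$ lies in $\iota_{\obn,\bn}(\oCS_{-\obn})$.
\item Moreover $\Sigma(ea)-a\in\fm_{\bx}^N$, because $g(e)$ vanishes to high order at $\tilde x$ for every $g\in\ofS\setminus\fS$. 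This is exactly where \eqref{eqn:disjoint covering} is used, in the form $g(\tilde x)\notin\fS\tilde x$ for $g\notin\fS$.
\item Together with $\fm_{\bx}^N\CS_{-\bn}\subset J(\bpsi)_{\bx}$ for $N\gg 0$, this gives $I\subseteq \iota_{\obn,\bn}(\oCS_{-\obn})+J(\bpsi)_{\bx}$.
\end{itemize}
Your ``invertible factors'' belong to a separate step, $\CS_{-\bn}=I+(\CS_{-\bn}\cap J(\bpsi)_{\bx})$. There they are absorbed into the arbitrary non-symmetric Laurent polynomial inside $\text{Sym}_{\bn}$, by approximating $z^{d}$ divided by the ratio to high order at the points of $\fS\tilde x$, not via symmetric multipliers. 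In your dual language, these two facts show that a combination of upper residue functionals at $\bx$ vanishing on $\iota_{\obn,\bn}(\oCS_{-\obn})$ vanishes on $I$, hence on all of $\CS_{-\bn}$.
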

		
		\medskip
		
		\noindent We will now prove Claim~\ref{claim:iso}. To prove that the map \eqref{eqn:claim iso} is simultaneously well-defined and injective, we must show that any $F \in \oCS_{-\obn}$ has the property that
		\[
		F \in J(\obpsi)_{\obx} \quad \Leftrightarrow \quad F\in J(\bpsi)_{\bx}, \forall \bx \in \fold^{-1}(\obx)
		\]
		In other words, we have
		\begin{equation}
			\label{eqn:folded residue}
			\underset{z_n = \ox_n}{\text{Res}} \cdots \underset{z_1 = \ox_1}{\text{Res}} \frac {F (z_1,\dots,z_n)(\text{any monomial})}{\prod_{1\leq a < b \leq n} \ozeta_{\oi_b \oi_a} \left(\frac {z_b}{z_a} \right)} \prod_{a=1}^n \opsi_{\oi_a}(z_a) = 0  
		\end{equation}
		for all orderings $\ox_1,\dots,\ox_n$ of $\obx$ (with compatible ordering $\oi_1,\dots,\oi_n$ of $\obn$) if and only if 
		\begin{equation}
			\label{eqn:unfolded residue}
			\underset{z_n = x_n}{\text{Res}} \cdots \underset{z_1 = x_1}{\text{Res}} \frac {F (z_1,\dots,z_n)(\text{any monomial})}{\prod_{1\leq a < b \leq n} \zeta_{i_b i_a} \left(\frac {z_b}{z_a} \right)} \prod_{a=1}^n \psi_{i_a}(z_a) = 0  
		\end{equation}
		for all orderings $x_1,\dots,x_n$ of any $\bx \in \fold^{-1}(\obx)$ (with compatible ordering $i_1,\dots,i_n$ of some $\bn \in \fold^{-1}(\obn)$). The aforementioned ``if and only if" claim follows from the observation that we can have a non-trivial residue in \eqref{eqn:folded residue} only if for all $b \in \{1,\dots,n\}$, the scalar $\ox_b$ is either
		
		\medskip 
		
		\begin{itemize}[leftmargin=*]
			
			\item a pole of $\opsi_{\oi_b}(z_b)$, so by \eqref{eqn:acceptable covering} we have 
			\[
			\ox_b = \frac {x_b}{\omega_{i_b}}
			\]
			with $x_b \in P_{i_b}$ for a unique (due to \eqref{eqn:disjoint covering}) vertex $i_b \in \fold^{-1}(\oi_b)$, or
			
			\medskip 
			
			\item a zero of $\ozeta_{\oi_b\oi_a} \left(\frac {z_b}{z_a} \right)$ for some $a < b$, which must be of the form
			\[
			\ox_b = \ox_a q^{-1}_{\oalpha}
			\]
			for some $\oalpha : \oi_b \rightarrow \oi_a$. By \eqref{eqn:assumption}, there is a unique lift $\alpha : i_b \rightarrow i_a$ of $\oalpha$, for a unique $i_b \in \fold^{-1}(\oi_b)$. Plugging formula \eqref{eqn:matching parameters} into the display above, we have
			\[
			\ox_b = \ox_a q^{-1}_\alpha \frac {\omega_{i_a}}{\omega_{i_b}}
			\]
			By induction, we have $\ox_a = \frac {x_a}{\omega_{i_a}}$ with $x_a \in P_{i_a}$. If we define
			\[
			\ox_b = \frac {x_b}{\omega_{i_b}}
			\]
			then \eqref{eqn:condition covering} implies that $x_b \in P_{i_b}$.
			
		\end{itemize}
		
		\noindent Thus, any residue \eqref{eqn:folded residue} corresponds to a unique residue \eqref{eqn:unfolded residue}. The fact that the vanishing of these two residues is equivalent boils down to the fact that
		\[
		\frac {\ozeta_{\oi_b \oi_a}(x)}{\zeta_{i_bi_a} \left(\frac {\omega_{i_b}}{\omega_{i_a}} x \right)} = \prod_{i_b \neq j \in \rho^{-1}(\oi_b)} \zeta_{ji_a}\left(\frac {\omega_j}{\omega_{i_a}}x\right) \quad \text{and} \quad \frac {\psi_{i_b}(\omega_{i_b} y)}{\opsi_{\oi_b}(y)} = \prod_{i_b \neq j \in \rho^{-1}(\oi_b)} \frac 1{\psi_j(\omega_j y)}
		\]
		do not vanish at $x \in \frac {\omega_{i_a}P_{i_b}}{\omega_{i_b}P_{i_a}}$ and $y \in \frac {P_{i_b}}{\omega_{i_b}}$, respectively. This is because of \eqref{eqn:condition covering} and
		\[
		\frac {P_j}{\omega_j} \cap \frac {P_{i_b}}{\omega_{i_b}} = \varnothing
		\]
		for all $i_b \neq j \in \rho^{-1}(\oi_b)$. The discussion above establishes that the map \eqref{eqn:claim iso} is well-defined and injective. Moreover, it shows that for any fixed $\obx$ there is a unique
		\[
		\bn \in \rho^{-1}(\obn) \quad \text{and} \quad \bx \in \fold^{-1}(\obx) \subset (\BK^*)^{\bn} 
		\]
		which produces a non-zero quotient in the codomain of the map \eqref{eqn:claim iso}. Thus, it remains to show that 
		\begin{equation}
			\label{eqn:remains to show}
			\CS_{-\bn} = \iota_{\obn,\bn} \left(\oCS_{-\obn} \right) + (\CS_{-\bn}  \cap J(\bpsi)_{\bx})
		\end{equation}
		for any $\bx \in (\BK^*)^{\bn}$, in order to conclude the surjectivity of the map \eqref{eqn:claim iso}.
		
		\medskip 
		
		\begin{claim}
			\label{claim:commutative algebra}
			
			Consider rings $R = \BK[z_1^{\pm 1},\dots,z_n^{\pm 1}]^{\fS} \supset \BK[z_1^{\pm 1},\dots,z_n^{\pm 1}]^{\ofS} = \oR$, defined with respect to parabolic subgroups
			\[
			\fS \subset \ofS \subset \fS_n
			\]
			and any ideal $J \subset R$ that is supported at a single point $x \in \emph{Spec}(R)$ such that $g(x) \neq x$ for all $g \in \ofS \backslash \fS$. Then for any ideal $I \subset R$ such that $\Sigma(I) \subset I$, we have 
			\begin{equation}
				\label{eqn:commutative algebra}
				I = \Sigma(I) + (I \cap J)
			\end{equation}
			where 
			\[
			\Sigma : R \rightarrow \oR \subset R, f(z_1,\dots,z_n) \mapsto \sum_{g \in \ofS/\fS} g(f(z_1,\dots,z_n)) = \frac{1}{|\fS|} \sum_{g \in \ofS} g(f(z_1,\dots,z_n))
			\] 
			denotes the map of averaging with respect to $\ofS$. 
			
		\end{claim}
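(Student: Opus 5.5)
The plan is to produce a single element $e \in R$ that behaves like a ``local idempotent'' at $x$. Concretely, $e$ should satisfy $e \equiv 1$ modulo $J$, and for every nontrivial coset $g\fS \in \ofS/\fS$ the translate $g(e)$ should lie in $J$. Once such an $e$ exists, (\ref{eqn:commutative algebra}) follows formally. Take any $f \in I$. Then $ef \in I$ because $I$ is an ideal, so $\Sigma(ef) \in \Sigma(I) \subset I$. Writing $\Sigma$ as a sum over coset representatives, we get
\[
\Sigma(ef) = ef + \sum_{g\fS \neq \fS} g(e)\, g(f) \equiv ef \equiv f \mod J .
\]
Here we use that $g(f) \in R$ and that $J$ is an ideal. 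Hence $f - \Sigma(ef) \in I \cap J$, which gives $f \in \Sigma(I) + (I \cap J)$. The reverse inclusion is trivial because $\Sigma(I) \subset I$.

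To construct $e$, first identify the points of $\text{Spec}(R)$ as $\fS$-orbits of points of $(\BK^*)^n$. For $g \in \ofS$ the point $g(x)$ is well defined, and the point $g^{-1}(x)$ depends only on the coset $g\fS$: indeed $(gs)^{-1}p = s^{-1}g^{-1}p$ lies in the same $\fS$-orbit as $g^{-1}p$. Let $\fm_y \subset R$ denote the maximal ideal of a point $y$. Since $R$ is Noetherian (a finitely generated algebra by classical invariant theory) and $J$ is supported only at $x$, we have $\sqrt J = \fm_x$. Hence $\fm_x^N \subset J$ for some $N$. By hypothesis, for each nontrivial coset the point $g^{-1}(x)$ is distinct from $x$. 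Thus the ideals $J$ and $\fm_{g^{-1}(x)}^N$ (one for each nontrivial coset, with repeated points collected together) are pairwise comaximal. By the Chinese remainder theorem there is $e \in R$ with $e \equiv 1 \mod J$ and $e \in \fm_{g^{-1}(x)}^N$ for all nontrivial cosets.

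It remains to check that $g(e) \in J$. The automorphism $g$ of $\BK[z_1^{\pm1},\dots,z_n^{\pm1}]$ sends $\fS$-invariants to $g\fS g^{-1}$-invariants. So I will argue on the level of the Laurent polynomial ring $A = \BK[z^{\pm1}]$, or rather verify directly that $g(e) \in \fm_x^N$ when $e$ vanishes to order $N$ along the orbit $g^{-1}(x)$. Vanishing to order $N$ along the orbit is preserved under the substitution $g$, and it moves the orbit to the orbit of $x$. Then $g(e)\in \fm_xA$ to order $N$, and intersecting back with $R$ lands inside $J$, possibly after enlarging $N$ so that $(\fm_x^{N}A)\cap R \subset J$; this enlargement is possible since $A$ is finite over $R$.

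The main obstacle is exactly this last bookkeeping step. Because $g(e)$ is a priori only $g\fS g^{-1}$-invariant rather than $\fS$-invariant, one has to pass through $A$ and compare powers of maximal ideals in $R$ and in $A$. I expect this to be routine, using the finiteness of $A$ over $R$ and the fact that the summands $g(e)g(f)$ recombine into $\fS$-invariant expressions inside $\Sigma(ef)$.
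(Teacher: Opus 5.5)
This is essentially the paper's proof. It also uses the Chinese remainder theorem to build $e_x\in(1+\fm_x^N)\cap\bigcap_{i\geq 2}\fm_{x_i}^K$, where the $x_i$ are the other $\fS$-orbits in $\ofS x$, and then shows $\Sigma(e_xa)-a\in\fm_x^N$. The bookkeeping you defer is exactly its inclusion $\mathfrak{n}_{\tilde{x}}^{K}\cap R\subset\fm_x^N$ for $K=Nn$ (with $\tilde{x}$ a lift of $x$). The paper applies this inclusion to the $\fS$-invariant element $\Sigma(e_xa)-e_xa$, each of whose summands $g(e_xa)$, $g\in\ofS\setminus\fS$, lies in $\mathfrak{n}_{\tilde{x}}^K$.

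Two corrections. Drop the assertion in your first paragraph that $g(f)\in R$, since it is only $g\fS g^{-1}$-invariant. Also, $g$ carries the $\fS$-orbit of $g^{-1}(x)$ only onto a $g\fS g^{-1}$-orbit through $\tilde{x}$, not onto $\fS\tilde{x}$. What you actually need is that $g(e)$ vanishes to high order at $\tilde{x}$, together with the $\fS$-invariance of the whole sum.
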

		
		\medskip 
		
		\noindent We will prove Claim~\ref{claim:commutative algebra} after we show how to use it in order to obtain \eqref{eqn:remains to show}. Indeed, let us invoke the Claim for  $R = \CV_{-\bn}$, $\oR = \oCV_{-\obn}$, $J = J(\bpsi)_{\bx}$ and
		\[
		I \subset \CV_{-\bn}
		\]
		defined to be the linear span of elements
		\begin{equation}
			\label{eqn:spherical 4}
			\text{Sym}_{\bn} \left[z_{\oi_1\bullet_1}^{d_1} \cdots z_{\oi_n\bullet_n}^{d_n} \prod_{1\leq a < b \leq n} \ozeta_{\oi_b\oi_a} \left(\frac {z_{\oi_b \bullet_b}}{z_{\oi_a \bullet_a}} \right) \right] 
		\end{equation}
		(compare with \eqref{eqn:spherical}-\eqref{eqn:spherical 2}). Formula \eqref{eqn:commutative algebra} states that
		\begin{equation}
			\label{eqn:1}
			I = \iota_{\obn,\bn} \left(\oCS_{-\obn} \right) + (I \cap J(\bpsi)_{\bx})
		\end{equation}
		While it is clear that $I \subseteq \CS_{-\bn}$ because $\zeta_{ij}\left(\frac {\omega_i}{\omega_j} x\right)$ divides $\ozeta_{\oi\oj}(x)$, we also claim that
		\begin{equation}
			\label{eqn:2}
			\CS_{-\bn} = I + (\CS_{-\bn} \cap J(\bpsi)_{\bx})
		\end{equation}
		Indeed, the equation above merely says that
		\begin{equation}
			\label{eqn:3}
			I \equiv \CS_{-\bn} \text{ modulo }J(\bpsi)_{\bx}
		\end{equation}
		or equivalently, that $\left[ I \right] = \left[\CS_{-\bn} \right]$ in the quotient ring
		\begin{equation}
			\label{eqn:quotient ring}
			\CV_{-\bn}/J(\bpsi)_{\bx}
		\end{equation}
		Formula \eqref{eqn:3} is a consequence of the fact that the Laurent polynomial
		\[
		\frac {\ozeta_{\oi_b \oi_a}(x)}{\zeta_{i_bi_a} \left(\frac {\omega_{i_b}}{\omega_{i_a}}x\right)} = \prod_{i_b \neq j \in \rho^{-1}(\oi_b)} \zeta_{ji_a} \left( \frac {\omega_j}{\omega_{i_a}} x\right) 
		\]
		does not vanish at any $x \in \frac {\omega_{i_a} P_{i_b}}{\omega_{i_b} P_{i_a}}$ (by \eqref{eqn:condition covering} and \eqref{eqn:disjoint covering}), and is thus an invertible element of the quotient ring \eqref{eqn:quotient ring}. Therefore, the right-hand sides of \eqref{eqn:spherical 2} and \eqref{eqn:spherical 4} span the same linear subspace of the quotient ring \eqref{eqn:quotient ring}. This establishes \eqref{eqn:3}, which is equivalent to \eqref{eqn:2}. Formulas \eqref{eqn:1} and \eqref{eqn:2} imply formula \eqref{eqn:remains to show}, and with it the proof of Theorem~\ref{thm:main covering}. \end{proof}
	
	\medskip
	
	\begin{proof} \emph{of Claim~\ref{claim:commutative algebra}:}
		Let $\mathfrak{m}_{x}$ be the maximal ideal of $R$ supported at a single point $x \in \text{Spec}(R)$. It suffices to show that 
		\[
		I = \Sigma(I) + (I \cap \mathfrak{m}_x^N)
		\]
		for all $N \in \mathbb{N}^*$, or equivalently, that the composition of linear maps
		\begin{equation}
			\label{eqn:composition}
			\Sigma(I) \hookrightarrow I \twoheadrightarrow I/(I \cap \mathfrak{m}_x^N)
		\end{equation}
		is surjective. Let $\tilde{x} \in (\BK^*)^n$ be lift of $x$, i.e. an ordering of its coordinates. Denote by $\mathfrak{n}_{\tilde{x}} \subset \BK[z_1^{\pm 1},\dots,z_n^{\pm 1}]$ the maximal ideal supported at $\tilde{x}$. Then $\mathfrak{m}_x = \mathfrak{n}_{\tilde{x}} \cap R$.
		
		\medskip
		
		Since any $f \in R$ is invariant under $\fS$ by definition, we have
		\begin{equation}
			\label{eqn: map Sigma is surjective}
			\Sigma(f) -f = \frac{1}{|\fS|} \sum_{g \in \ofS} g(f) - f =\frac{1}{|\fS|} \sum_{g \in \ofS \setminus \fS} g(f)
		\end{equation}
		For fixed $N$ as in \eqref{eqn:composition}, we claim that
		\begin{equation}
			\label{eqn:uno}
			(\mathfrak{n}_{\tilde{x}}^{K} \cap R) \subset (\mathfrak{n}_{\tilde{x}} \cap R)^N = \mathfrak{m}_x^N
		\end{equation}
		for $K = Nn$. We will explain why this is so in the case $\fS = \fS_n$, as the case when $\fS$ is an arbitrary parabolic subgroup of $\fS_n$ is analogous (but more notationally heavy): any element of $R$ is a polynomial expression in $y_i = e_i(z_1,\dots,z_n) - e_i(\tilde{x}_1,\dots \tilde{x}_n)$, which must have degree $\geq \frac Kn = N$ in the $y_i$'s if such an element is to lie in $\mathfrak{n}_{\tilde{x}}^{K}$.
		
		\medskip
		
		Let $\{x_1,\dots,x_r\} \subset \mathrm{Spec}(R)$ be the set of distinct $\fS$-orbits in $\ofS x$. In particular, let $x_1 =x$. Because the ideals $\fm_{x_i}$ are supported at distinct points, the Chinese remainder theorem implies that there exists an element
		\begin{equation}
			\label{eqn:due}
			e_x \in \left(1+\fm_x^N\right) \cap \bigcap_{i=2}^r \fm_{x_i}^K \subset R
		\end{equation}
		By assumption, for any $g \in \ofS$ we have $g(\tilde{x}) \in \fS\tilde{x}$ if and only if $g \in \fS$. Under the embedding $R \subset \BK[z_1^{\pm 1},\dots,z_n^{\pm 1}]$, formula \eqref{eqn:due} implies that we have $e_x \in \mathfrak{n}_y^K$ for all $y \in \ofS\tilde{x} \setminus \fS\tilde{x}$. Thus, for any $g \in \ofS \setminus \fS$, we have 
		\[
		g(e_x) \in \mathfrak{n}_{\tilde{x}}^K
		\]
		To complete the proof of Claim~\ref{claim:commutative algebra}, let $a \in I$ be arbitrary and so $\Sigma(e_xa) \in \Sigma(I) \subset I$. By the above argument, we have $g(e_xa) = g(e_x)g(a) \in \mathfrak{n}_{\tilde{x}}^K$ for all $g \in \ofS \setminus \fS$. This suggests that when $f=e_xa$, each term in the sum \eqref{eqn: map Sigma is surjective} lies in $\mathfrak{n}_{\tilde{x}}^K$. Therefore, 
		\[
		\Sigma(e_xa) - e_xa \in \mathfrak{n}_{\tilde{x}}^K \cap R \subset \mathfrak{m}_x^N
		\]
		by \eqref{eqn:uno}. Together with the fact that $e_xa-a = (e_x-1)a \in \mathfrak{m}_x^N$ (see \eqref{eqn:due}), we have $\Sigma(e_xa) -a \in \mathfrak{m}_x^N$. Therefore, the map $\Sigma(I) \to I/(I \cap \mathfrak{m}_x^N)$ of \eqref{eqn:composition} is surjective. \end{proof}
	
	\medskip 
	
	\subsection{Analogies}
	\label{sub:analogies}
	
	Having proven Theorem~\ref{thm:main covering}, we will now turn to the proof of our main Theorem~\ref{thm:main folding}. The two results are quite analogous to each other, and rather than repeat the entire proof, we will point out the ways in which the argument of Theorem~\ref{thm:main covering} must be adapted in order to yield Theorem~\ref{thm:main folding}. Thus, in what follows, the notation $\zeta,\psi,\ozeta,\opsi$ will refer to folded and unfolded zeta functions, as in Subsection~\ref{sub:folding q-characters}. The analysis which follows the statement of Claim~\ref{claim:iso} must be adapted as follows: we can have a non-trivial residue in \eqref{eqn:folded residue} only if for all $b \in \{1,\dots,n\}$, the variable $z_b$ is set equal to a scalar $\ox_b$ which arises either from
	
	\medskip 
	
	\begin{itemize}[leftmargin=*]
		
		\item a pole of $\opsi_{\oi_b}(z_b)$, so by \eqref{eqn:acceptable folding} we have 
		\[
		\ox_b \in P_{\sigma^{\ell_b}(i_{b,0})}\omega^{-\ell_b}
		\]
		where the value of $\ell_b \in \BZ/m\BZ$ is uniquely determined due to \eqref{eqn:disjoint folding} (above, $i_{b,0}$ denotes the distinguished preimage of $\oi_b$). Thus,
		\[
		i_b = \sigma^{\ell_b}(i_{b,0}) \in I
		\]
		is also uniquely determined, and we associate both $\ell_b$ and $i_b$ to the variable $z_b$;
		
		\medskip 
		
		\item a zero of $\ozeta_{\oi_b\oi_a} \left(\frac {z_b}{z_a} \right)$ for some $a < b$, so of the form
		\[
		\ox_b = \ox_a \omega^{-k} q^{-1}_{\alpha}
		\]
		for some arrow $\alpha : \sigma^k(i_{b,0}) \rightarrow i_{a,0}$. By induction, we have $\ox_a \in P_{i_a} \omega^{-\ell_a}$ where $i_a = \sigma^{\ell_a}(i_{a,0})$ has already been associated to $z_a$. Therefore, if we let $\ell_b = k+\ell_a \in \BZ/m\BZ$ and $i_b = \sigma^{\ell_b}(i_{b,0}) \in I$, then \eqref{eqn:condition folding} implies that
		\[
		\ox_b \in P_{i_b} \omega^{-\ell_b}
		\]
		We associate $\ell_b$ and $i_b$ to $z_b$, and notice that $\alpha: i_b \rightarrow i_a$.
		
	\end{itemize}
	
	\medskip
	
	\noindent The bullets above show that any residue of the form \eqref{eqn:folded residue} corresponds to a unique residue of the form \eqref{eqn:unfolded residue}, where in the latter, we consider the scalars $x_b = \ox_b \omega^{\ell_b} \in \BK^*$ and the vertices $i_b \in I$ which were defined in the bulleted list above. With this in mind, the argument of Theorem~\ref{thm:main covering} carries on. The only significant difference is that in \eqref{eqn:remains to show}, the map $\iota_{\bn,\obn}$ of \eqref{eqn:iota} must be replaced by the map
	\[
	\iota_{\bn,\obn} : \oCV_{-\obn} = \BK\left[z_{\oi 1}^{m_{\oi}}, z_{\oi 1}^{-m_{\oi}}, \dots z_{\oi n_{\oi}}^{m_{\oi}}, z_{\oi n_{\oi}}^{-m_{\oi}} \right]_{\oi \in \oI}^{\fS_{\obn}} \hookrightarrow \BK\left[z_{i 1}, z_{i 1}^{-1}, \dots z_{in_i}, z_{in_i}^{-1} \right]_{i \in I}^{\fS_{\bn}} = \CV_{-\bn}
	\]
	of \eqref{eqn:iota folding}, which we showed Subsection~\ref{sub:technical} to be none other than the embedding
	\[
	\BK\left[z_{i 1}, z_{i 1}^{-1}, \dots z_{in_i}, z_{in_i}^{-1} \right]_{i \in I}^{G_{\bm,\bn} \rtimes \fS_{\obn}} \hookrightarrow \BK\left[z_{i 1}, z_{i 1}^{-1}, \dots z_{in_i}, z_{in_i}^{-1} \right]_{i \in I}^{\fS_{\bn}}
	\]
	With this in mind, one needs to modify Claim~\ref{claim:commutative algebra} by replacing the parabolic subgroup $\ofS$ with the semidirect product $G \rtimes \ofS$, where $G$ is a finite abelian group that multiplies the variables $x_1,\dots,x_n$ independently by various roots of unity. The reason why the proof of Claim~\ref{claim:commutative algebra} carries on under this more restrictive situation is that assumption \eqref{eqn:disjoint folding} still guarantees the property
	\[
	g(\tilde{x}) \in \fS\tilde{x} \quad \Leftrightarrow \quad g \in \fS
	\]
	for any $g \in G \rtimes \ofS$ and any $\tilde{x}$ which can arise from ordering the coordinates of the support point of the ideal $J(\obpsi)_{\obx}$.
	However, once we make this modification, the proof of the aforementioned claim is essentially unchanged. 
	
	\bigskip
	
	\appendix
	
	\section{Proof of twisted Drinfeld-Serre relations}
	\label{appendix1}
	
	\medskip
	
	We will now prove Proposition~\ref{prop:affine}, following the arguments laid out in \cite{N Reduced, N Arbitrary, NSS}. This will involve a case-by-case analysis of the possible interactions between the various orbits of the automorphism $\sigma$ of finite and affine Dynkin diagrams.
	
	\medskip 
	
	\begin{proof} \emph{of Proposition~\ref{prop:affine}:} As explained in the paragraph immediately after the statement of the Proposition, our strategy is to prove that
		\begin{equation}
			\label{eqn:strategy}
			\Big \langle \text{LHS of \eqref{eqn:cubic serre 1}-\eqref{eqn:cubic serre 3 in A2n}}, F \Big \rangle = 0 \quad \Leftrightarrow \quad \Big(F \text{ satisfies \eqref{eqn:twisted wheel strongly}} \Big) 
		\end{equation}
		Since the wheel conditions \eqref{eqn:twisted wheel strongly} depend on two vertices $i \neq j \in I$, we will prove the above equivalence on a case-by-case basis depending on the $\sigma$-orbits of $i$ and $j$. Note that in order to keep our formulas manageable, we will work with the primed zeta functions \eqref{eqn:zeta prime strongly} and \eqref{eqn:zeta prime strongly twisted}, in both the twisted and untwisted cases. 
		
		\medskip
		
		\subsection*{\emph{Case 1}: \texorpdfstring{$i \sim j$}{i sim j} and there are no other vertices in the orbit of \texorpdfstring{$i$}{i} connected to \texorpdfstring{$j$}{j}. Assume that \texorpdfstring{$i$}{i} is not one of the middle two nodes in type \texorpdfstring{$A_{2n}$}{A2n}, which is treated in Cases 3 and 4. }

		\subsubsection*{\emph{Case 1.1}}
		Let's start with the case $\sigma(i) \neq i$. We have 
		\[
		\ozeta'_{ij}(x) \sim \ozeta'_{ji}(x) \sim 1 - xq^{-1}, \quad \ozeta'_{ii}(x) \sim \frac{1 - xq^2}{1-x}
		\]
		In particular, the zeta functions and wheel conditions are exactly the same as in untwisted type $A_2$, for which the equivalence \eqref{eqn:strategy} was proved in \cite{NSS}. For completeness, we now provide an alternative proof. Consider the cubic expression
		\begin{equation*}
			\begin{split}
				C(x_1,x_2,y) & = (x_1 - yq^{-1})e_i(x_1)e_i(x_2)e_j(y)\\ 
				& + (x_2q^{-1}-x_1q)e_i(x_2)e_j(y)e_i(x_1) \\
				&+ (yq-x_2)e_j(y)e_i(x_1)e_i(x_2)
			\end{split}
		\end{equation*}
		and the quadratic expressions
		\[
		Q(x,y) = e_i(x)e_j(y)(x-yq^{-1}) - e_j(y)e_i(x)(xq^{-1}-y)
		\]
		\[
		Q'(x_1,x_2) = e_i(x_1)e_i(x_2)(x_1q^{-2}-x_2) - e_i(x_2)e_i(x_1)(x_1-x_2q^{-2})
		\]
		We have $Q(x,y) =0$ and $Q'(x_1,x_2) =0$ on both sides of the isomorphism in Proposition~\ref{prop:affine}, because of relation \eqref{eqn:rel prime quantum 2 twisted strongly}. Moreover, the equation \footnote{Specifically, we replace the factor $\frac{x_1q_2\cdots q_{\ell}}{x_{\ell}}$ in $P_{\ell}$ in the expression $e_{\bigcirc}$ (see \eqref{eqn:clunky}) with $\frac{x_{\ell+1}}{x_2q_3\cdots q_{\ell+1}}$. These two expressions are equivalent because we are simply using a different expansion of delta functions here, and both are dual to the wheel condition.} $e_{\bigcirc} = 0$ with $\bigcirc = \{z_{i1} = z_{i2}q^2, z_{i2}=z_{j1}q^{-1} \}$ takes the particular form $C(x_1,x_2,y) = 0$, 
		and
		\begin{equation}
			\label{eqn:cf}
			\Big \langle C(x_1,x_2,y), F \Big \rangle = 0 \quad \Leftrightarrow \quad F \Big |_{z_{i1} \mapsto z_{j1}q, z_{i2} \mapsto z_{j1}q^{-1}} = 0
		\end{equation}
		was proved in \cite{NSS} (the fact that the right-hand side does not depend on $x_1$ and $x_2$ is due to the fact that $(x_2-yq^{-1})C(x_1,x_2,y) = (x_1-yq)C(x_1,x_2,y) = 0$ modulo the quadratic expressions $Q(x_1,y),Q(x_2,y),Q'(x_1,x_2)$). We need to show that under the condition $Q(x_1,y) = Q(x_2,y) = Q'(x_1,x_2) =0$, the relation $C(x_1,x_2,y) = 0$ is equivalent to the Drinfeld-Serre relation \eqref{eqn:cubic serre 1}. To this end, consider the expression
		\begin{equation}
			\label{eqn:big s}
			\begin{split}
				S &= e_i(x_2)Q(x_1,y)x_2 + Q(x_2,y)e_i(x_1)x_2q - C(x_1,x_2,y)x_2q^2\\
				&+ e_i(x_1)Q(x_2,y)x_1 + Q(x_1,y)e_i(x_2)x_1q - C(x_2,x_1,y)x_1q^2\\
				&+Q'(x_1,x_2)e_j(y)yq + e_j(y)Q'(x_1,x_2)(-yq^3 +x_1q^2 + x_2q^2)
			\end{split}
		\end{equation}
		A direct computation shows that 
		\[
		S = \Big(\text{LHS of \eqref{eqn:cubic serre 1}}\Big) \times x_1x_2(1-q^2)
		\] 
		Therefore, the Drinfeld-Serre relation \eqref{eqn:cubic serre 1} holds if and only if $S = 0$. We conclude
		\[
		C(x_1,x_2,y) = C(x_2,x_1,y) = 0 \quad \Rightarrow \quad  \eqref{eqn:cubic serre 1} \text{ holds} 
		\]
		On the other hand, if \eqref{eqn:cubic serre 1} holds, then $S = 0$ and so \eqref{eqn:big s} implies that
		\[
		C(x_1,x_2,y)x_2 + C(x_2,x_1,y)x_1 = 0
		\]
		By a direct calculation using \eqref{eqn:pair formula} that is akin to the proof of \eqref{eqn:cf}, we have
		\begin{equation*}
			\begin{split}
				\Big \langle C(x_1,&x_2,y)x_2  + C(x_2,x_1,y)x_1, F \Big \rangle = 0 \quad \Leftrightarrow \\
				&\left[z_{i2}\delta \left(\frac{z_{i1}}{z_{i2}q^2} \right) \delta\left(\frac{z_{i2}q}{z_{j1}} \right) + z_{i1}\delta \left(\frac{z_{i2}}{z_{i1}q^2}\right) \delta \left(\frac{z_{i1}q}{z_{j1}} \right)\right] F(z_{i1},z_{i2},z_{j1}) = 0
			\end{split}
		\end{equation*}
		Because the two products of delta functions are supported on disjoint loci, we have 
		\begin{equation*}
			\begin{split}
				\Big \langle C(x_1,x_2,y)x_2  &+ C(x_2,x_1,y)x_1  , F \Big \rangle = 0  \quad \Leftrightarrow \\
				&F \Big |_{z_{i1} \mapsto z_{i2} q^{2}, z_{j1} \mapsto z_{i2} q} =  F \Big |_{z_{i2} \mapsto z_{i1} q^{2}, z_{j1} \mapsto z_{i1} q} = 0
			\end{split}
		\end{equation*}
		and so $S=0$ implies $C(x_1,x_2,y) = C(x_2,x_1,y) = 0$ (again using \eqref{eqn:cf}. \footnote{Alternatively, one can derive $C(x_1,x_2,y)$ from $S$ using the formula
			\begin{equation*}
				\begin{split}
					C(x_1,x_2,y) = &e_i(x_1)Q(x_2,y)(q^2+1) + e_i(x_2)Q(x_1,y)q^2 -Q(x_2,y)e_i(x_1)q \\ 
					&+Q'(x_1,x_2)e_j(y)q^2 + S(x_1,x_2,y)\frac{qy-x_2}{(1-q^2)x_1x_2}.
				\end{split}
			\end{equation*}
			(note the importance of $q$ not being a root of unity).}
		
		\medskip
		
		\subsubsection*{\emph{Case 1.2}}
		Consider the case $\sigma(i) = i$. We have
		\[
		\ozeta'_{ij}(x) \sim \ozeta'_{ji}(x) \sim 1-q^{-m}x^m, \quad \ozeta'_{ii}(x) \sim \frac{1 - q^{2m}x^m}{1-x^m}
		\]
		while the wheel condition states that
		\[
		F \Big |_{z_{i1} \mapsto z_{j1} q, z_{i2} \mapsto z_{j1} q^{-1}} = 0
		\]
		However, as $F$ is in fact a Laurent polynomial of $z_{i1}^m$ and $z_{i2}^m$, the condition above is equivalent to 
		\[
		F \Big |_{z_{i1}^m \mapsto z_{j1}^m q^{-m}, z_{i2}^m \mapsto z_{j1}^m q^m} = 0
		\]
		The argument is then identical to the previous case, with all variables being replaced by their $m$-th powers.
		
		\medskip 
		
		\subsection*{\emph{Case 2}: if \texorpdfstring{$i \sim j$}{i sim j} and there are exactly \texorpdfstring{$m$}{m} vertices in the orbit of \texorpdfstring{$i$}{i} connected to \texorpdfstring{$j$}{j}.}
		
		\subsubsection*{\emph{Case 2.1:} \texorpdfstring{$m=2$}{m=2}}
		We have the zeta functions
		\[
		\ozeta'_{ij}(x) \sim \ozeta'_{ji}(x) \sim 1-q^{-2}x^2, \quad \ozeta'_{ii}(x) \sim \frac{1 - q^{2}x}{1-x}
		\]
		Since shuffle elements $F$ are actually Laurent polynomials in $z_{j1}^2$, we have the equivalence
		\[
		F \Big |_{z_{i1} \mapsto z_{i2}q^2, z_{j1} \mapsto z_{i2}q} = 0 \quad \Leftrightarrow \quad
		F \Big |_{z_{i1} \mapsto z_{i2} q^2, z_{j1}^2 \mapsto z_{i2}^2q^2} = 0
		\]
		Following \cite[Proposition~3.5]{NSS}, we have the equality of formal series
		\begin{equation}
			\label{eqn:delta 2.1}
			\begin{split}
				\delta \left(\frac{x_1}{x_2q^2} \right) \delta\left(\frac{x_2^2q^2}{y^2}\right)& = \text{ev}_{|x_1| \gg |x_2| \gg |y|} \left[ \frac{1}{(1-\frac{x_2q^2}{x_1})(1-\frac{y^2}{x_2^2q^2})} \right] \\
				& + \text{ev}_{|x_2| \gg |y| \gg |x_1|} \left[ \frac{\frac{x_1^2}{y^2q^2}(1+\frac{x_2q^2}{x_1})}{(1-\frac{y^2}{x_2^2q^2})(1-\frac{x_1^2}{y^2q^2})} \right] \\
				& + \text{ev}_{|y| \gg |x_1| \gg |x_2|} \left[ \frac{\frac{x_2^2q^2}{y^2}}{(1-\frac{x_2q^2}{x_1})(1-\frac{x_1^2}{y^2q^2})} \right]
			\end{split}
		\end{equation}
		where ``ev" refers to the power series expansion of the rational function in question, determined by the appropriate order of the variables. Consider the cubic expression
		\begin{equation*}
			\begin{split}
				C(x_1,x_2,y) & = (x_1^2 - y^2q^{-2})e_i(x_1)e_i(x_2)e_j(y)\\ 
				& + (x_1q^{-1} + x_2q)(x_2q^{-1}-x_1q)e_i(x_2)e_j(y)e_i(x_1) \\
				&+ (y^2q^2-x_2^2)e_j(y)e_i(x_1)e_i(x_2)
			\end{split}
		\end{equation*}
		and the quadratic expressions
		\[
		Q(x,y) = e_i(x)e_j(y)(x^2-y^2q^{-2}) - e_j(y)e_i(x)(x^2q^{-2}-y^2)
		\]
		\[
		Q'(x_1,x_2) = e_i(x_1)e_i(x_2)(x_1q^{-2}-x_2) - e_i(x_2)e_i(x_1)(x_1-x_2q^{-2})
		\]
		Following \cite[Proposition~3.5]{NSS}, formula \eqref{eqn:delta 2.1} implies
		\begin{equation}
			\label{eqn:cf 2}
			\Big \langle C(x_1,x_2,y), F \Big \rangle = 0 \qquad \Leftrightarrow \quad F \Big |_{z_{i1} \mapsto z_{i2} q^2, z_{j1}^2 \mapsto z_{j2}^2q^2} = 0
		\end{equation}
		As before, we need to show that under the condition that $Q(x_1,y) = Q(x_2,y) = Q'(x_1,x_2) =0$, the relation $C(x_1,x_2,y) = 0$ is equivalent to the Drinfeld-Serre relation \eqref{eqn:cubic serre 3}. To this end, we consider the expression
		\begin{equation*}
			\begin{split}
				S & =  e_i(x_2)Q(x_1,y)x_2 +  Q(x_2,y)e_i(x_1)x_2 q^2 - C(x_1,x_2,y)x_2q^2 \\
				& + e_i(x_1)Q(x_2,y) x_1 + Q(x_1,y)e_i(x_2)x_1q^2 - C(x_2,x_1,y)x_1q^2 \\
				& +Q'(x_1,x_2)e_j(y) \left(y^2+\frac{2x_1x_2}{1+q^{-4}} \right) \\
				& + e_j(y)Q'(x_1,x_2) \left(-y^2q^4+x_1^2q^2+x_2^2q^2 + \frac{2x_1x_2}{1+q^{-4}}\right)
			\end{split}
		\end{equation*}
		Then 
		\[
		S = \Big(\text{LHS of \eqref{eqn:cubic serre 3}}\Big) \times x_1x_2 \frac{q^{-2}-q^{2}}{q^{2}+q^{-2}}
		\]
		and we conclude that the Drinfeld-Serre relation \eqref{eqn:cubic serre 3} holds if and only if $S = 0$. Assuming the quadratic relations $Q(x_1,y) = Q(x_2,y) = Q'(x_1,x_2) =0$, we have 
		\[
		S = 0 \quad \Leftrightarrow \quad
		C(x_1,x_2,y)x_2  + C(x_2,x_1,y)x_1 = 0
		\]
		By a direct calculation using \eqref{eqn:pair formula} and \eqref{eqn:delta 2.1}, we have
		\begin{equation*}
			\begin{split}
				\Big \langle C(x_1,&x_2,y)x_2  + C(x_2,x_1,y)x_1  , F \Big \rangle = 0 \quad \Leftrightarrow \\
				&\left[z_{i2}\delta \left(\frac{z_{i1}}{z_{i2}q^2} \right) \delta\left(\frac{z_{i2}^2q^2}{z_{j1}^2} \right) + z_{i1}\delta \left(\frac{z_{i2}}{z_{i1}q^2}\right) \delta\left(\frac{z_{i1}^2q^2}{z_{j1}^2} \right)\right]F(z_{i1},z_{i2},z_{j1}) = 0
			\end{split}
		\end{equation*}
		Because the two products of delta functions are supported on disjoint loci, we have
		\begin{equation*}
			\begin{split}
				\Big \langle C(x_1,x_2,y)x_2  &+ C(x_2,x_1,y)x_1  , F \Big \rangle = 0  \quad \Leftrightarrow \\
				&F \Big |_{z_{i1} \mapsto z_{i2} q^{2}, z_{j1}^2 \mapsto z_{i2}^2 q^2} =  F \Big |_{z_{i2} \mapsto z_{i1} q^{2}, z_{j1}^2 \mapsto z_{i1}^2 q^2} = 0
			\end{split}
		\end{equation*}
		Since $F$ is symmetric in $z_{i1}$ and $z_{i2}$, the condition above is equivalent to \eqref{eqn:cf 2}, thus establishing the equivalence of $C(x_1,x_2,y) = 0$ and $C(x_1,x_2,y)x_2 + C(x_2,x_1,y)x_1  = 0$, the latter of which is equivalent to $S=0$ and the Drinfeld-Serre relations.
		
		\medskip
		
		\subsubsection*{\emph{Case 2.2}: \texorpdfstring{$m=3$}{m=3}}
		By analogy with the discussion above, we have 
		\begin{equation*}
			\begin{split}
				\delta \left(\frac{x_1}{x_2q^2} \right)\delta \left(\frac{x_2^3q^3}{y^3}\right)& = \text{ev}_{|x_1| \gg |x_2| \gg |y|} \left[ \frac{1}{(1-\frac{x_2q^2}{x_1})(1-\frac{y^3}{x_2^3q^3})} \right] \\
				& + \text{ev}_{|x_2| \gg |y| \gg |x_1|} \left[ \frac{\frac{x_1^3}{y^3q^3}(1+\frac{x_2q^2}{x_1}+\frac{x_2^2q^4}{x_1^2})}{(1-\frac{y^3}{x_2^3q^3})(1-\frac{x_1^3}{y^3q^3})} \right] \\
				& + \text{ev}_{|y| \gg |x_1| \gg |x_2|} \left[ \frac{\frac{x_2^3q^3}{y^3}}{(1-\frac{x_2q^2}{x_1})(1-\frac{x_1^3}{y^3q^3})} \right]
			\end{split}
		\end{equation*}
		and consider correspondingly the cubic expression
		\begin{equation*}
			\begin{split}
				C(x_1,x_2,y) & = (x_1^3 - y^3q^{-3})e_i(x_1)e_i(x_2)e_j(y) \\
				& + (x_2^2q^2 + x_1x_2 + x_1^2q^{-2})(x_2q^{-1}-x_1q) e_i(x_2)e_j(y)e_i(x_1) \\
				& + (y^3q^3 - x_2^3)e_j(y)e_i(x_1)e_i(x_2)
			\end{split}
		\end{equation*}
		and the quadratic expressions
		\[
		Q(x,y) = e_i(x)e_j(y)(x^3 - y^3q^{-3}) - e_j(y)e_i(x)(x^3q^{-3}-y^3)
		\]
		\[
		Q'(x_1,x_2) = e_i(x_1)e_i(x_2)(x_1q^{-2}-x_2) - e_i(x_2)e_i(x_1)(x_1 - x_2q^{-2})
		\]
		If we let
		\begin{equation*}
			\begin{split}
				S & =  e_i(x_2)Q(x_1,y)x_2 +  Q(x_2,y)e_i(x_1)x_2 q^3 - C(x_1,x_2,y)x_2q^2 \\
				& + e_i(x_1)Q(x_2,y) x_1 + Q(x_1,y)e_i(x_2)x_1q^3 - C(x_2,x_1,y)x_1q^2 \\
				& +Q'(x_1,x_2)e_j(y)(q^{-1}y^3 + q^4\frac{x_1^2x_2+x_1x_2^2}{q^4-q^2+1}) \\
				&+ e_j(y)Q'(x_1,x_2)(q^2(x_1^3+x_2^3) - q^5y^3 + q^4\frac{x_1^2x_2+x_1x_2^2}{q^4-q^2+1})
			\end{split}
		\end{equation*}
		then we have
		\[
		S = \Big(\text{LHS of \eqref{eqn:cubic serre 3}}\Big) \times x_1x_2\frac{1-q^4}{1+q^6}
		\]
		The remainder of the argument is identical to the $m=2$ case, so we skip it.
		
		\medskip
		
		\subsection*{\emph{Case 3}: \texorpdfstring{$i=n$}{i=n}, \texorpdfstring{$j=n+1$}{j=n+1} in type \texorpdfstring{$A_{2n}$}{A2n}}
		
		In this case, we have
		\[
		\ozeta'(x) \sim \frac{(1-q^{2}x)(1+q^{-1}x)}{1-x}
		\]
		where above and henceforth we suppress the indices $i,j$ of $\ozeta'$ due to the fact that they have the same image in the folded quiver. The wheel condition reads
		\[
		F \Big |_{z_{1} \mapsto - z_{3} q, z_{2} \mapsto -z_{3} q^{-1}} = 0
		\]
		Consider the cubic expression
		\begin{equation*}
			\begin{split}
				C(x_1,x_2,x_3) & = (x_1 - x_3q^{2})(x_1 + x_3q^{-1})(x_1+x_2q^{-1})(x_2-x_3q^{2})e(x_1)e(x_2)e(x_3)  \\ 
				&- q^{-1}(x_2 - x_1q^{2})(x_2 + x_1q^{-1})(x_2 -x_3q^{2})(x_3-x_1q^{2})e(x_2)e(x_3)e(x_1)  \\ 
				&- q(x_3 - x_2q^{2})(x_3 + x_2q^{-1})(x_3-x_1q^{2})(x_1+x_2q^{-1})e(x_3)e(x_1)e(x_2)
			\end{split}
		\end{equation*}
		and the quadratic expression
		\[
		Q(x,y) = e(x)e(y)(x+yq^{-1})(x-yq^{2}) - e(y)e(x)(xq^{-1}+y)(xq^{2}-y)
		\]
		Consider
		\begin{equation*}
			\begin{split}
				S = &\sum_{\sigma \in \mathfrak{S}_3} C(x_{\sigma(1)},x_{\sigma(2)},x_{\sigma(3)})\frac{1}{q-1+q^{-1}} \\
				&+ \sum_{cyclic} Q(x_1,x_2)e(x_3)(x_1x_2+qx_1x_3+qx_2x_3-q^2(q+1+q^{-1})x_3^2) \\
				&- \sum_{cyclic}e(x_1)Q(x_2,x_3)((q+1+q^{-1})x_1^2-qx_1x_2-qx_1x_3-q^2x_2x_3)
			\end{split}
		\end{equation*}
		It is straightforward to check that
		\[
		S = \Big(\text{LHS of \eqref{eqn:cubic serre 1 in A2n}}\Big) \times (q^{1/2}-q^{-1/2})(q+1)^3x_1x_2x_3
		\]
		Therefore, the Drinfeld-Serre relation \eqref{eqn:cubic serre 1 in A2n} holds if and only if 
		\[
		\sum_{\sigma \in \mathfrak{S}_3} C(x_{\sigma(1)},x_{\sigma(2)},x_{\sigma(3)}) = 0
		\]
		As above, the reason that the above cubic relation is dual to the wheel condition $F |_{z_{\sigma(1)} \mapsto - z_{\sigma(3)} q, z_{\sigma(2)} \mapsto -z_{\sigma(3)} q^{-1}} = 0$ is the delta function identity
		\begin{equation*}
			\begin{split}
				\delta \left(\frac{x_{\sigma(1)}}{x_{\sigma(2)}q^2} \right) \delta\left(\frac{-x_{\sigma(2)}q}{x_{\sigma(3)}}\right)& = \text{ev}_{|x_{\sigma(1)}| \gg |x_{\sigma(2)}| \gg |x_{\sigma(3)}|} \left[ \frac{1}{(1-\frac{x_{\sigma(2)}q^2}{x_{\sigma(1)}})(1+\frac{x_{\sigma(3)}}{x_{\sigma(2)}q})} \right] \\
				& + \text{ev}_{|x_{\sigma(2)}| \gg |x_{\sigma(3)}| \gg |x_{\sigma(1)}|} \left[ \frac{\frac{x_{\sigma(1)}}{-x_{\sigma(3)}q}}{(1+\frac{x_{\sigma(3)}}{x_{\sigma(2)}q})(1+\frac{x_{\sigma(1)}}{x_{\sigma(3)}q})} \right] \\
				& + \text{ev}_{|x_{\sigma(3)}| \gg |x_{\sigma(1)}| \gg |x_{\sigma(2)}|} \left[ \frac{\frac{-x_{\sigma(2)}q}{x_{\sigma(3)}}}{(1-\frac{x_{\sigma(2)}q^2}{x_{\sigma(1)}})(1-\frac{x_{\sigma(1)}}{x_{\sigma(3)}q^2})} \right]
			\end{split}
		\end{equation*}
		Therefore, 
		\begin{equation*}
			\begin{split}
				\Big \langle \sum_{\sigma \in \mathfrak{S}_3} &C(x_{\sigma(1)},x_{\sigma(2)},x_{\sigma(3)}) , F \Big \rangle = 0 \quad \Leftrightarrow \\
				&\sum_{\sigma \in \mathfrak{S}_3} \delta \left(\frac{z_{\sigma(1)}}{z_{\sigma(2)}q^2} \right) \delta\left(\frac{-z_{\sigma(2)}q}{z_{\sigma(3)}}\right)  F(z_{\sigma(1)},z_{\sigma(2)},z_{\sigma(3)}) = 0
			\end{split}
		\end{equation*}
		Because $\delta \left(\frac{z_{\sigma(1)}}{z_{\sigma(2)}q^2} \right) \delta\left(\frac{-z_{\sigma(2)}q}{z_{\sigma(3)}}\right)$ are supported on pairwise disjoint loci, we have 
		\begin{equation*}
			\begin{split}
				\Big \langle \sum_{\sigma \in \mathfrak{S}_3} &C(x_{\sigma(1)},x_{\sigma(2)},x_{\sigma(3)}) , F \Big \rangle = 0 \quad \Leftrightarrow  \\
				&F \Big |_{z_{\sigma(1)} \mapsto - z_{\sigma(3)} q, z_{\sigma(2)} \mapsto -z_{\sigma(3)} q^{-1}} = 0, \forall \sigma \in \mathfrak{S}_3
			\end{split}
		\end{equation*}
		Thus $S = 0$ implies $C(x_1,x_2,x_3) = 0$. The remainder of the argument is identical to the one in the previous case.
		
		\medskip
		
		\subsection*{\emph{Case 4}: \texorpdfstring{$i=n$}{i=n}, \texorpdfstring{$j=n-1$}{j=n-1} in type \texorpdfstring{$A_{2n}$}{A2n}} 
		We have
		\[
		\ozeta'_{ij}(x) \sim \ozeta'_{ji}(x) \sim 1-q^{-1}x, \quad \ozeta'_{ii}(x) \sim \frac{(1-q^{2}x)(1+q^{-1}x)}{1-x}
		\]
		We consider the cubic expression
		\begin{equation*}
			\begin{split}
				C(x_1,x_2,y) &= (x_1 - yq^{-1})(x_1+x_2q^{-1}) e_i(x_1)e_i(x_2)e_j(y) \\
				&+ (x_2q^{-1} - x_1q)(x_2+x_1q^{-1})e_i(x_2)e_j(y)e_i(x_1)  \\ 
				&+ (yq-x_2)(x_1+x_2q^{-1})e_j(y)e_i(x_1)e_i(x_2)
			\end{split}
		\end{equation*}
		and the quadratic expressions
		\[
		Q(x,y) = e_i(x)e_j(y)(x-yq^{-1}) - e_j(y)e_i(x)(xq^{-1}-y)
		\]
		and
		\[
		Q'(x_1,x_2) = e_i(x_1)e_i(x_2)(x_1-x_2q^2)(x_1+x_2q^{-1}) - e_i(x_2)e_i(x_1)(x_1q^2-x_2)(x_1q^{-1}+x_2)
		\]
		We have 
		\[
		\Big(\text{LHS of \eqref{eqn:cubic serre 3 in A2n}}\Big) = S \times x_1x_2(q^{-1}-q)
		\]
		where
		\begin{equation*}
			\begin{split}
				S &= e_i(x_2)Q(x_1,y)(x_1x_2q^{-1}+x_2^2) + Q(x_2,y)e_i(x_1)(x_1x_2+x_2^2q) - C(x_1,x_2,y)x_2q^2\\
				&+ e_i(x_1)Q(x_2,y)(x_1x_2q^{-1}+x_1^2) + Q(x_1,y)e_i(x_2)(x_1x_2+x_1^2q) - C(x_2,x_1,y)x_1q^2\\
				&+Q'(x_1,x_2)e_j(y)yq^{-1} + e_j(y)Q'(x_1,x_2)(-yq +x_1 + x_2)
			\end{split} 
		\end{equation*}
		The remainder of the argument follows the one in Case 1 quite closely, so we omit it. \end{proof}
	
	\section{Affine Dynkin diagrams with automorphisms}
	\label{appendix2}
	We consider all the non-trivial automorphisms of affine Dynkin diagrams (except for affine type $A_{n-1}^{(1)}$), which are represented in the following pictures. 
	
	\medskip
	
	\[
	\begin{gathered}
		\begin{tikzpicture}[
			vertex/.style={circle, fill=black, inner sep=1.5pt},
			lab/.style={below=3pt, font=\small}
			]
			
			\node[vertex, label={[lab]above:$0$}] (0) at (0,-0.5) {};
			\node[vertex, label={[lab]above:$1$}] (1) at (0,0.5) {};
			\node[vertex, label={[lab]above:$2$}] (2) at (1.5,0) {};
			\node[vertex, label={[lab]above:$3$}] (3) at (3,0) {};
			\node[vertex, label={[lab]above:$n-1$}] (n-1) at (4.5,0) {};
			\node[vertex, label={[lab]above:$n$}] (n) at (6,0) {};
			
			\draw (0) -- (2);
			\draw (1) -- (2);
			\draw (2) -- (3);
			\draw (3) -- (3.25,0);
			\node at (3.75,0) {$\cdots$};
			\draw (4.25,0) -- (n-1);
			
			\draw ($(n-1)+(0,0.04)$) -- ($(n)+(0,0.04)$);
			\draw ($(n-1)-(0,0.04)$) -- ($(n)-(0,0.04)$);
			\draw[-] ($(n)-(0.4,0.15)$) -- ($(n)-(0.03,0)$);
			\draw[-] ($(n)-(0.4,-0.15)$) -- ($(n)-(0.03,0)$);
			
			\draw[dashed, -, bend left=55] (0) to (1);
			\draw[dashed, -] (2) to[out=45, in=135, looseness=20] (2);
			\draw[dashed, -] (3) to[out=45, in=135, looseness=20] (3);
			\draw[dashed, -] (n-1) to[out=45, in=135, looseness=20] (n-1);
			\draw[dashed, -] (n) to[out=45, in=135, looseness=20] (n);
		\end{tikzpicture}
		\\[4pt]
		\text{Type }B_n^{(1)} \, (n \ge 3)
	\end{gathered}
	\]
	
	\[
	\begin{gathered}
		\begin{tikzpicture}[
			vertex/.style={circle, fill=black, inner sep=1.5pt},
			lab/.style={below=3pt, font=\small}
			]
			
			\node[vertex, label={[lab]above:$0$}] (0) at (0,0) {};
			\node[vertex, label={[lab]above:$1$}] (1) at (1.5,0) {};
			\node[vertex, label={[lab]above:$n-1$}] (n-1) at (4.5,0) {};
			\node[vertex, label={[lab]above:$n$}] (n) at (6,0) {};

			\draw (1) -- (2,0);
			\node at (3,0) {$\cdots$};
			\draw (4,0) -- (n-1);
			\draw ($(n-1)+(0,0.04)$) -- ($(n)+(0,0.04)$);
			\draw ($(n-1)-(0,0.04)$) -- ($(n)-(0,0.04)$);
			\draw[-] ($(1)-(0.4,0.15)$) -- ($(1)-(0.03,0)$);
			\draw[-] ($(1)-(0.4,-0.15)$) -- ($(1)-(0.03,0)$);
			\draw ($(0)+(0,0.04)$) -- ($(1)+(0,0.04)$);
			\draw ($(0)-(0,0.04)$) -- ($(1)-(0,0.04)$);
			\draw[-] ($(n-1)+(0.4,0.15)$) -- ($(n-1)+(0.03,0)$);
			\draw[-] ($(n-1)+(0.4,-0.15)$) -- ($(n-1)+(0.03,0)$);
			
			\draw[dashed, -, bend left=55] (0) to (n);
			\draw[dashed, -, bend left=55] (1) to (n-1);
		\end{tikzpicture}
		\\[4pt]
		\text{Type }C_{n}^{(1)} \, (n \ge 2)
	\end{gathered}
	\]
	
	\[
	\begin{gathered}
		\begin{tikzpicture}[
			vertex/.style={circle, fill=black, inner sep=1.5pt},
			lab/.style={below=3pt, font=\small}
			]
			
			\node[vertex, label={[lab]above:$0$}] (0) at (0,-0.5) {};
			\node[vertex, label={[lab]above:$1$}] (1) at (0,0.5) {};
			\node[vertex, label={[lab]above:$2$}] (2) at (1.5,0) {};
			\node[vertex, label={[lab]above:$3$}] (3) at (3,0) {};
			\node[vertex, label={[lab]above:$n-3$}] (n-3) at (4.5,0) {};
			\node[vertex, label={[lab]above:$n-2$}] (n-2) at (6,0) {};
			\node[vertex, label={[lab]above:$n-1$}] (n-1) at (7.5,0.5) {};
			\node[vertex, label={[lab]above:$n$}] (n) at (7.5,-0.5) {};
			
			\draw (0) -- (2);
			\draw (1) -- (2);
			\draw (2) -- (3);
			\draw (3) -- (3.25,0);
			\node at (3.75,0) {$\cdots$};
			\draw (4.25,0) -- (n-3);
			\draw (n-3) -- (n-2);
			\draw (n-2) -- (n-1);
			\draw (n-2) -- (n);
			
			\draw[dashed, red, -, bend left=55] (0) to (1);
			\draw[dashed, blue, -, bend left=55] (n-1) to (n);
			\draw[dashed, -, bend left=35] (n) to (0);
			\draw[dashed, -, bend left=35] (1) to (n-1);
			\draw[dashed, -, bend left=55] (2) to (n-2);
			\draw[dashed, -, bend left=55] (3) to (n-3);
			
		\end{tikzpicture}
		\\[4pt]
		\text{Type }D_n^{(1)} (n \ge 4), \text{the picture above has a dihedral group $D_8$ group symmetry,}
		\\
		\text{generated by the three involutions depicted in black, red and blue above.}
	\end{gathered}
	\]
	
	\[
	\begin{gathered}
		\begin{tikzpicture}[
			vertex/.style={circle, fill=black, inner sep=1.5pt},
			lab/.style={below=3pt, font=\small}
			]
			
			\node[vertex, label={[lab]above:$0$}] (0) at (0,0) {};
			\node[vertex, label={[lab]above:$1$}] (1) at (1.5,0) {};
			\node[vertex, label={[lab]above:$2$}] (2) at (3,0) {};
			\node[vertex, label={[lab]above:$3$}] (3) at (4.5,0) {};
			\node[vertex, label={[lab]above:$4$}] (4) at (6,0) {};
			\node[vertex, label={[lab]above:$5$}] (5) at (3,-1.5) {};
			\node[vertex, label={[lab]above:$6$}] (6) at (3,-3) {};
			
			\draw (0) -- (1);
			\draw (1) -- (2);
			\draw (2) -- (3);
			\draw (3) -- (4);
			\draw (2) -- (5);
			\draw (5) -- (6);
			
			\draw[dashed, red, -, bend left=55] (4) to (6);
			\draw[dashed, red, -, bend left=55] (3) to (5);
			
			\draw[dashed, blue, ->, bend left=55] (0) to (4);
			\draw[dashed, blue, ->, bend left=55] (1) to (3);
			\draw[dashed, blue, ->, bend left=35] (3) to (5);
			\draw[dashed, blue, ->, bend left=35] (4) to (6);
			\draw[dashed, blue, ->, bend left=35] (5) to (1);
			\draw[dashed, blue, ->, bend left=35] (6) to (0);
			
		\end{tikzpicture}
		\\[4pt]
		\text{Type }E_6^{(1)}, \text{there are three order $2$ automorphisms (depicted in red in the}
		\\
		\text{picture above) and two order $3$ automorphisms (depicted in blue).}
	\end{gathered}
	\]

	\[
	\begin{gathered}
		\begin{tikzpicture}[
			vertex/.style={circle, fill=black, inner sep=1.5pt},
			lab/.style={below=3pt, font=\small}
			]
			
			\node[vertex, label={[lab]above:$0$}] (0) at (0,0) {};
			\node[vertex, label={[lab]above:$1$}] (1) at (1.5,0) {};
			\node[vertex, label={[lab]above:$2$}] (2) at (3,0) {};
			\node[vertex, label={[lab]above:$3$}] (3) at (4.5,0) {};
			\node[vertex, label={[lab]above:$4$}] (4) at (6,0) {};
			\node[vertex, label={[lab]above:$5$}] (5) at (7.5,0) {};
			\node[vertex, label={[lab]above:$6$}] (6) at (9,0) {};
			\node[vertex, label={[lab]above:$7$}] (7) at (4.5,-1.5) {};
			
			\draw (0) -- (1);
			\draw (1) -- (2);
			\draw (2) -- (3);
			\draw (3) -- (4);
			\draw (4) -- (5);
			\draw (5) -- (6);
			\draw (3) -- (7);
			
			\draw[dashed, -, bend left=55] (0) to (6);
			\draw[dashed, -, bend left=55] (1) to (5);
			\draw[dashed, -, bend left=55] (2) to (4);
			\draw[dashed, -] (3) to[out=45, in=135, looseness=20] (3);
			\draw[dashed, -] (7) to[out=-45, in=45, looseness=20] (7);
			
		\end{tikzpicture}
		\\[4pt]
		\text{Type }E_7^{(1)}
	\end{gathered}
	\]
	
	\[
	\begin{gathered}
		\begin{tikzpicture}[
			vertex/.style={circle, fill=black, inner sep=1.5pt},
			lab/.style={below=3pt, font=\small}
			]
			
			\node[vertex, label={[lab]above:$0$}] (0) at (0,-0.5) {};
			\node[vertex, label={[lab]above:$1$}] (1) at (0,0.5) {};
			\node[vertex, label={[lab]above:$2$}] (2) at (1.5,0) {};
			\node[vertex, label={[lab]above:$3$}] (3) at (3,0) {};
			\node[vertex, label={[lab]above:$n-1$}] (n-1) at (4.5,0) {};
			\node[vertex, label={[lab]above:$n$}] (n) at (6,0) {};
			
			\draw (0) -- (2);
			\draw (1) -- (2);
			\draw (2) -- (3);
			\draw (3) -- (3.25,0);
			\node at (3.75,0) {$\cdots$};
			\draw (4.25,0) -- (n-1);
			
			\draw ($(n-1)+(0,0.04)$) -- ($(n)+(0,0.04)$);
			\draw ($(n-1)-(0,0.04)$) -- ($(n)-(0,0.04)$);
			\draw[-] ($(n-1)+(0.4,0.15)$) -- ($(n-1)+(0.03,0)$);
			\draw[-] ($(n-1)+(0.4,-0.15)$) -- ($(n-1)+(0.03,0)$);
			
			\draw[dashed, -, bend left=55] (0) to (1);
			\draw[dashed, -] (2) to[out=45, in=135, looseness=20] (2);
			\draw[dashed, -] (3) to[out=45, in=135, looseness=20] (3);
			\draw[dashed, -] (n-1) to[out=45, in=135, looseness=20] (n-1);
			\draw[dashed, -] (n) to[out=45, in=135, looseness=20] (n);
		\end{tikzpicture}
		\\[4pt]
		\text{Type }A_{2n-1}^{(2)} \, (n \ge 3)
	\end{gathered}
	\]
	
	\[
	\begin{gathered}
		\begin{tikzpicture}[
			vertex/.style={circle, fill=black, inner sep=1.5pt},
			lab/.style={below=3pt, font=\small}
			]
			
			\node[vertex, label={[lab]above:$0$}] (0) at (0,0) {};
			\node[vertex, label={[lab]above:$1$}] (1) at (1.5,0) {};
			\node[vertex, label={[lab]above:$n-1$}] (n-1) at (4.5,0) {};
			\node[vertex, label={[lab]above:$n$}] (n) at (6,0) {};

			\draw (1) -- (2,0);
			\node at (3,0) {$\cdots$};
			\draw (4,0) -- (n-1);
			\draw ($(n-1)+(0,0.04)$) -- ($(n)+(0,0.04)$);
			\draw ($(n-1)-(0,0.04)$) -- ($(n)-(0,0.04)$);
			\draw[-] ($(n)-(0.4,0.15)$) -- ($(n)-(0.03,0)$);
			\draw[-] ($(n)-(0.4,-0.15)$) -- ($(n)-(0.03,0)$);
			\draw ($(0)+(0,0.04)$) -- ($(1)+(0,0.04)$);
			\draw ($(0)-(0,0.04)$) -- ($(1)-(0,0.04)$);
			\draw[-] ($(0)+(0.4,0.15)$) -- ($(0)+(0.03,0)$);
			\draw[-] ($(0)+(0.4,-0.15)$) -- ($(0)+(0.03,0)$);
			
			\draw[dashed, -, bend left=55] (0) to (n);
			\draw[dashed, -, bend left=55] (1) to (n-1);
		\end{tikzpicture}
		\\[4pt]
		\text{Type }D_{n+1}^{(2)} \, (n \ge 2)
	\end{gathered}
	\]

\end{document}